\newtheorem{lemma}{Lemma}[section]
\newtheorem{proposition}{Proposition}[section]
\newtheorem{thm}{Theorem}[section]
\newtheorem{definition}{Definition}[section]
\newtheorem{remark}{Remark}[section]
\def\text#1{\mbox{\rm #1}}
\DeclarePairedDelimiter{\ceil}{\lceil}{\rceil}
\newcommand{\argmin}{\mathop{\rm argmin}}
\newcommand{\norm}[1]{\|{#1} \|}
\newcommand{\wt}{\widetilde}
\newcommand{\Norm}[1]{\|{#1} \|}
\newcommand{\fnorm}[1]{\|#1\|_{\rm F}}
\newcommand{\opnorm}[1]{\|#1\|_{\rm op}}
\newcommand{\diag}{\mathop{\text{diag}}}
\newcommand{\supp}{{\rm supp}}
\newcommand{\D}{\mathcal{D}}
\newcommand{\U}{\mathcal{U}}
\newcommand{\TV}{{\sf TV}}
\title{Robust Covariance and Scatter Matrix Estimation under Huber's Contamination Model
}
\author[1]{Mengjie Chen}
\author[2]{Chao Gao}
\author[3]{Zhao Ren}
\affil[1]{
University of Chicago
 \\
mengjiechen@uchicago.edu
}
\affil[2]{
University of Chicago
 \\
chaogao@galton.uchicago.edu
}
\affil[3]{
University of Pittsburgh
 \\
 zren@pitt.edu
}
\begin{document}
\maketitle

\begin{abstract}
Covariance matrix estimation is one of the most important problems in statistics. To accommodate the complexity of modern datasets, it is desired to have estimation procedures that not only can incorporate the structural assumptions of covariance matrices, but are also robust to outliers from arbitrary sources. In this paper, we define a new concept called matrix depth and then propose a robust covariance matrix estimator by maximizing the empirical depth function. The proposed estimator is shown to achieve minimax optimal rate under Huber's $\epsilon$-contamination model for estimating covariance/scatter matrices with various structures including bandedness and sparsity.
 	
\smallskip

\textbf{Keywords.} Data depth, Minimax rate, High dimensional statistics, Outliers, Contamination model
\end{abstract}



\section{Introduction}



Covariance matrix estimation is one of the most important problems in statistics. The last decade has witnessed the rapid development of statistical theory for covariance matrix estimation under high dimensional settings. Starting from the seminal works of Bickel and Levina \citep{bickel2008covariance,bickel2008regularized}, covariance matrices with a list of different structures can be estimated with optimal theoretical guarantees. Examples include bandable matrix \citep{cai2010optimal}, sparse matrix \citep{lam2009sparsistency,cai2012optimal}, Toeplitz matrix \citep{ren2013optimal} and spiked matrix \citep{birnbaum13,cai13}. For a recent comprehensive review on this topic, see \cite{cai2014estimating}. However, these works do not take into account  the heavy-tailedness of  data and the possible presence of outliers. All  these methods are based on sample covariance matrix, which is shown to have a $1/(n+1)$ breakdown point \citep{hampel1971general}. This means that even if there exists only one arbitrary outlier in the whole dataset, the statistical performance of the estimator can be totally compromised. In this paper, we attempt to tackle the problems of robust covariance matrix estimation under high dimensional settings.

To be more specific, we consider the distribution $(1-\epsilon)N(0,\Sigma)+\epsilon Q$, where $Q$ is an arbitrary distribution that models the outliers and $\epsilon$ is the proportion of contamination. Given i.i.d. observations $X_1,...,X_n$ from this distribution, there are approximately $n\epsilon$ of them distributed according to $Q$, which can influence the performance of an estimator without robustness property. This setting is called $\epsilon$-contamination model, first proposed in a path-breaking paper by Huber \citep{huber1964robust}. In this paper, Huber proposed a robust location estimator and proved its minimax optimality under the $\epsilon$-contamination model. His work suggests an estimator that is optimal under the $\epsilon$-contamination model must achieve statistical efficiency and resistance to outliers simultaneously. Therefore, we view the $\epsilon$-contamination model as a natural framework to develop theories of robust estimation of covariance matrices. The goal of this paper is to propose an estimator of $\Sigma$ that achieves the minimax rate under Huber's $\epsilon$-contamination model. 

To obtain a robust covariance matrix estimator, we propose a new concept called matrix depth. For a $p$-variate distribution $X\sim\mathbb{P}$, the matrix depth of a positive semi-definite $\Gamma\in\mathbb{R}^{p\times p}$ with respect to $\mathbb{P}$ is defined as
\begin{equation}
\D(\Gamma ,\mathbb{P})=\inf_{||u||=1}\min \left\{ \mathbb{P}%
\{|u^{T}X|^{2}\leq u^{T}\Gamma u\},\mathbb{P}\{|u^{T}X|^{2}\geq u^{T}\Gamma
u\}\right\}. \label{eq:defmatrixdep}
\end{equation}
We will show that for $\mathbb{P}=N(0,\Sigma)$, the deepest matrix is $\beta\Sigma$ for some constant multiplier $\beta>0$. Thus, a natural estimator for $\Sigma$ is $\hat{\Gamma}/\beta$ with $\hat{\Gamma}=\arg\max_{\Gamma\succeq 0}\mathcal{D}(\Gamma,\mathbb{P}_n)$. Here, we use the notation $\mathbb{P}_n$ to denote the empirical distribution.

Our definition of matrix depth is parallel to Tukey's depth function \citep{tukey1975mathematics} for a location parameter. The deepest vector according to Tukey's depth is a natural extension of median in the multivariate setting, and thus can be used as a robust location estimator. Zuo and Serfling \citep{zuo2000general} advocated the notion of statistical depth function that satisfies the four properties in \cite{liu1990notion} and verified that Tukey's depth indeed satisfies all these properties while many other depth functions \citep{liu1990notion,oja1983descriptive,rousseeuw1999regression,vardi2000multivariate} do not. The multivariate median defined by Tukey's depth was shown to have high breakdown point \citep{donoho1982breakdown,donoho1983notion,donoho1992breakdown}. The original proposal of the depth function in \cite{tukey1975mathematics} not only provides a way for robust location estimation, but also gives a general way to summarize multivariate data. For example, the depth function can be used to define an index of scatteredness of data \citep{zuo2000nonparametric}. Based on the concept of data depth, a data peeling procedure has been proposed to estimate the covariance matrix.
Specifically, one may trim the data points according to their depths and use the remaining ones to estimate the covariance \citep{donoho1982breakdown,liu1999multivariate}. One may also estimate the covariance through a weighted average with weights that are functions of depths \citep{zuo2005depth}. Though the notion of Tukey's depth is closely related to covariance matrix estimation, depth functions that are directly defined on positive semi-positive matrices are not well explored in the literature.
The need for such a concept has been mentioned in \cite{serfling2004} based on a general framework of location depth functions by \cite{mizera2002depth,Mizera2004}. A proposal that is close in spirit to ours is \cite{zhang2002some}, which also uses the projection idea in Tukey's location depth.
The matrix depth defined in (\ref{eq:defmatrixdep}) offers another option.
Later, we will also define several variants of the matrix depth that take into account the high dimensional structures such as bandedness and sparsity. Those matrix depth functions are powerful tools for robust estimation of structured covariance matrices.

We apply the proposed robust matrix estimator to the problems of estimating banded covariance matrices, bandable covariance matrices, sparse covariance matrices and sparse principal components. We show that in all these cases, the estimators defined by the matrix depth functions achieve the minimax rates of the corresponding $\epsilon$-contamination models under the operator norm. Therefore, the new estimators enjoy both rate optimality and property of resistance to outliers. Interestingly, the minimax rates have a unified expression. That is, $\mathcal{M}(\epsilon)\asymp \max\left\{\mathcal{M}(0),\omega(\epsilon,\mathcal{F})\right\}$, where $\mathcal{M}(\epsilon)$ is the minimax rate for the probability class of distributions $(1-\epsilon)N(0,\Sigma)+\epsilon Q$ ranging over $\Sigma\in\mathcal{F}$ for some covariance matrix class $\mathcal{F}$ and all probability distributions $Q$. The first part $\mathcal{M}(0)$ is the classical minimax rate without contamination. The second part is determined by the quantity $\omega(\epsilon,\mathcal{F})$ called modulus of continuity. Its definition goes back to the fundamental works of Donoho and Liu \citep{donoho1991geometrizing} and Donoho \citep{donoho1994statistical}. A high level interpretation is that the least favorable contamination distribution $Q$ can be chosen in a way that the parameters within $\omega(\epsilon,\mathcal{F})$ under a given loss cannot be distinguished from each other. We establish this phenomenon rigorously through a general lower bound argument for all $\epsilon$-contamination models.

Besides $\epsilon$-contamination models with Gaussian distributions, we show that our proposed estimators also work for general elliptical distributions. To be specific, the setting $(1-\epsilon)P_{\Gamma}+\epsilon Q$ is also considered, where $\Gamma$ is the scatter matrix under the canonical representation of an elliptical distribution. In fact, a characteristic property of the scatter matrix $\Gamma$ of an elliptical distribution is $\mathcal{D}(\Gamma,P_{\Gamma})=1/2$. This property allows the depth function to combine naturally with the elliptical family. The resulting estimators are also shown to achieve the optimal convergence rates. To this end, we can claim that the proposed estimator by matrix depth have two extra robustness properties besides its rate optimality: resistance to outliers and insensitivity to heavy-tailedness. In fact, there are many works in the literature on scatter matrix estimation for elliptical distributions, including \cite{maronna1976robust,tyler1987distribution} in classical settings and \cite{xue2011optimal,han2013eca,han2013optimal,wegkamp2013adaptive,fan2014page,han2014robust,han2014scale,mitra2014multivariate,xue2014rank} in high dimensional settings. However, it still remains open whether these estimators can achieve the minimax rates of  the $\epsilon$-contamination models.


The $\epsilon$-contamination model is a setting where a successful estimator should achieve a good convergence rate and robustness simultaneously. By considering a population variation of the breakdown point which we term as $\delta$-breakdown point, we show in Section \ref{sec:delta-bp} that for a given estimator that has convergence rate $\delta$ under the $\epsilon$-contamination model, its $\delta$-breakdown point is at least $\epsilon$. This suggests convergence under Huber's $\epsilon$-contamination model is a more general notion of robustness than the breakdown point and it provides a unified way to study statistical convergence rate and robustness jointly.

The main contribution of the paper is the derivation of the minimax rates for robust covariance matrix estimation under Huber's $\epsilon$-contamination model, which can be achieved by optimizing over the proposed matrix depth function. We would like to clarify that, in high-dimensional settings, the proposed estimators based on matrix depth are challenging to compute, hence are mainly of theoretical interest. It is interesting and urgent to investigate in the future whether the minimax rates of covariance matrix estimation under Huber's $\epsilon$-contamination model can be achieved by a provable polynomial-time algorithm. For unstructured covariance matrices under low or moderate dimensions (up to $p=10$), the proposed depth-based estimators can be used in practice. We provide an algorithm and perform some numerical studies in the supplementary material \cite{supplement}. An R package is available on the Github at \url{https://github.com/ChenMengjie/DepthDescent}

The paper is organized as follows. First, we revisit Tukey's location depth in Section \ref{sec:location} and discuss the convergence rate of the associated multivariate median. The matrix depth is introduced in Section \ref{sec:matrix} and we use it as a tool to solve various robust structured covariance matrix estimation problems. In Section \ref{sec:elliptical}, we discuss the relationship between matrix depth and elliptical distributions. Results of covariance matrix estimation are extended to scatter matrix estimation for elliptical distributions. 
Section \ref{sec:lower} presents a general result on minimax lower bound for the $\epsilon$-contamination model. In Section \ref{sec:disc}, we discuss some related topics on robust statistics including the connection between breakdown point and the $\epsilon$-contamination model as well as an extension of our notion of matrix depth function to the setting with non-centered observations. All proofs of the theoretical results are given in Section \ref{sec:proof} and the supplementary materials \cite{supplement}. The supplementary materials \cite{supplement} also include some numerical studies of the proposed estimators for unstructured covariance matrices when the dimension is low or moderate.


We close this section by introducing some notation. Throughout the paper, we assume the covariance or scatter matrix of interest is not a zero matrix. Given an integer $d$, we use $[d]$ to denote the set $\{1,2,...,d\}$. For a vector $u=(u_i)$, $\norm{u}=\sqrt{\sum_iu_i^2}$ denotes the $\ell_2$ norm. For a matrix $A=(A_{ij})$, we use $s_k(A)$ to denote its $k$th singular value. The largest and the smallest singular values are denoted as $s_{\max}(A)$ and $s_{\min}(A)$, respectively. The operator norm of $A$ is denoted by $\opnorm{A}=s_{\max}(A)$ and the Frobenius norm by $\fnorm{A}=\sqrt{\sum_{ij}A_{ij}^2}$. When $A=A^T\in\mathbb{R}^{p\times p}$ is symmetric, $\diag(A)$ means the diagonal matrix whose $(i,i)$th entry is $A_{ii}$. Given a subset $J\subset[p]$, $A_{JJ}$ is an $|J|\times |J|$ submatrix, where $|J|$ means the cardinality of $J$. The set $S^{p-1}=\{u\in\mathbb{R}^p:||u||=1\}$ is the unit sphere in $\mathbb{R}^p$. Given two numbers $a,b\in\mathbb{R}$, we use $a\vee b=\max(a,b)$ and $a\wedge b=\min(a,b)$. For two positive sequences $\{a_n\},\{b_n\}$, $a_n\lesssim b_n$ means $a_n\leq Cb_n$ for some constant $C>0$ independent of $n$, and $a_n\asymp b_n$ means $a_n\lesssim b_n$ and $b_n\lesssim a_n$. Given two probability distributions $\mathbb{P},\mathbb{Q}$, the total variation distance is defined by $\sup_B|\mathbb{P}(B)-\mathbb{Q}(B)|$, and the Kullback-Leibler divergence is defined by $D(\mathbb{P}||\mathbb{Q})=\int \log\frac{d\mathbb{P}}{d\mathbb{Q}}d\mathbb{P}$. Throughout the paper, $C,c$ and their variants denote generic constants that do not depend on $n$. Their values may change from line to line.


\section{Prologue: Robust Location Estimation}
\label{sec:location}

We start by the problem of robust location estimation.
Consider i.i.d. observations $X_1,...,X_n\sim \mathbb{P}_{(\epsilon,\theta,Q)}=(1-\epsilon)P_{\theta}+\epsilon Q$, where $P_{\theta}=N(\theta,I_p)$. The goal is to estimate the location parameter $\theta$ from the contaminated data $\{X_i\}_{i=1}^n$. It is known that the sample average is not robust because of its sensitivity to outliers.
We consider Tukey's median (\cite{tukey1974t6,tukey1975mathematics}, see \cite{tukey1977exploratory} as well) as a robust estimator of the location $\theta$. First, we need to introduce Tukey's depth function. For any $\eta\in\mathbb{R}^p$ and a distribution $\mathbb{P}$ on $\mathbb{R}^p$, the Tukey's depth of $\eta$ with respect to $\mathbb{P}$ is defined as
$$\D(\eta,\mathbb{P})=\inf_{u\in S^{p-1}}\mathbb{P}\{u^TX\leq u^T\eta\},\quad\text{where }X\sim\mathbb{P}.$$
Given i.i.d. observations $\{X_i\}_{i=1}^n$, the Tukey's depth of $\eta$ with respect to the observations $\{X_i\}_{i=1}^n$ is defined as
$$\D(\eta,\{X_i\}_{i=1}^n)=\D(\eta,\mathbb{P}_n)=\min_{u\in S^{p-1}}\frac{1}{n}\sum_{i=1}^n\mathbb{I}\{u^TX_i\leq u^T\eta\},$$
where $\mathbb{P}_n=\frac{1}{n}\sum_{i=1}^n\delta_{X_i}$ is the empirical distribution. Then, Tukey's median is defined to be the deepest point with respect to the observations, i.e.,
\begin{equation}
\hat{\theta}=\arg\max_{\eta\in\mathbb{R}^p}\D(\eta,\{X_i\}_{i=1}^n).\label{eqn:def depth}
\end{equation}
When (\ref{eqn:def depth}) has multiple maxima, $\hat{\theta}$ is understood as any vector that attains  the deepest level.
The convergence rate of $\hat{\theta}$ is stated in the following theorem.

\begin{thm}
\label{thm:location upper}
Consider Tukey's median $\hat{\theta}$.
Assume that $\epsilon<1/5$. Then, there exist absolute constants $C,C_1>0$, such that for any $\delta\in(0,1/2)$ satisfying $C_1\left(\frac{p}{n}+\frac{\log(1/\delta)}{n}\right)<1$, we have
$$\norm{\hat{\theta}-\theta}^2\leq C\left(\left(\frac{p}{n}\vee\epsilon^2\right)+\frac{\log(1/\delta)}{n}\right),$$
with $\mathbb{P}_{(\epsilon,\theta,Q)}$-probability at least $1-2\delta$ uniformly over all $\theta$ and $Q$.
\end{thm}

\begin{remark}
By scrutinizing the proof of Theorem \ref{thm:location upper}, the result can hold for any $\epsilon<1/3-c'$ for an arbitrarily small constant $c'$. The critical threshold $1/3$ has a meaning of the highest breakdown point for Tukey's median \citep{donoho1982breakdown,donoho1992breakdown}. Further discussion on the connection between the breakdown point and the $\epsilon$-contamination model is given in Section \ref{sec:disc}.
\end{remark}

\begin{remark}\label{rmk:nothing}
Theorem \ref{thm:location upper} is valid for identity covariance matrix. For
a more general case $P_{\theta }=N(\theta,\Sigma)$, as long as $s_{\max}(\Sigma )\leq M$ with some absolute constant $M>0,$ the result remains
valid. In addition, the result can also be extended to the class of elliptical distributions considered in Section \ref{sec:elliptical}.
\end{remark}

To the best of our knowledge, Theorem \ref{thm:location upper} is the first result in the literature that gives an error bound for Tukey's median under Huber's $\epsilon$-contamination model.
It says that the convergence rate of Tukey's median is $p/n$ in terms of the squared $\ell_2$ loss when $\epsilon^2\lesssim p/n$. Otherwise, the rate is $\epsilon^2$. Therefore, as long as the number of outliers from $Q$ is at the order of $n\epsilon=O(\sqrt{np})$, the convergence rate of Tukey's median is identical to the case when $\epsilon=0$. The next theorem shows that Tukey's median is optimal under the $\epsilon$-contamination model in a minimax sense.

\begin{thm}
\label{thm:location lower}
There exist some absolute constants $C,c>0$ such that
$$\inf_{\hat{\theta}}\sup_{\theta,Q}\mathbb{P}_{(\epsilon,\theta,Q)}\left\{\Norm{\hat{\theta}-\theta}^2\geq C\left(\frac{p}{n}\vee\epsilon^2\right)\right\}\geq c,$$
for any $\epsilon\in[0,1]$.
\end{thm}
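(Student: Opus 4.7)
The lower bound $p/n \vee \epsilon^2$ is a maximum of two terms, so I would establish each one separately and then combine. The first term $p/n$ is simply the classical Gaussian mean estimation rate transferred into this setting: restricting the adversary to $Q = P_\theta$ makes $\mathbb{P}_{(\epsilon,\theta,Q)} = N(\theta, I_p)$ regardless of $\epsilon$, so a standard packing of a Euclidean ball of radius $r \asymp \sqrt{p/n}$ in $\mathbb{R}^p$ combined with Fano's inequality yields $\|\hat\theta - \theta\|^2 \gtrsim p/n$ with constant probability.

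The second term $\epsilon^2$ is the contamination-driven lower bound, which I would handle via Le Cam's two-point method. Take $\theta_0 = 0$ and $\theta_1 = c_0\epsilon e_1$ for a small constant $c_0$, and construct probability measures $Q_0, Q_1$ for which the mixtures agree exactly, $(1-\epsilon)P_{\theta_0} + \epsilon Q_0 = (1-\epsilon)P_{\theta_1} + \epsilon Q_1$. Concretely, decompose the signed measure $P_{\theta_1} - P_{\theta_0} = \mu_{+} - \mu_{-}$ via Jordan decomposition, with $\|\mu_{+}\| = \|\mu_{-}\| = T := \TV(P_{\theta_0}, P_{\theta_1})$, and set
$Q_0 = \frac{1-\epsilon}{\epsilon}\mu_{-} + \bigl(1 - \frac{(1-\epsilon)T}{\epsilon}\bigr)\nu$ and $Q_1 = \frac{1-\epsilon}{\epsilon}\mu_{+} + \bigl(1 - \frac{(1-\epsilon)T}{\epsilon}\bigr)\nu$ for an arbitrary probability measure $\nu$. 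These are bona fide probability measures precisely when $(1-\epsilon)T \leq \epsilon$; since $T = \TV(N(0, I_p), N(c_0\epsilon e_1, I_p)) \leq C c_0 \epsilon$ for Gaussian laws, the requirement holds for $c_0$ small enough whenever $\epsilon$ is bounded away from $1$. Because the contaminated distributions then coincide, so do their $n$-fold products, and Le Cam's two-point inequality delivers $\max_{\theta\in\{\theta_0,\theta_1\}} \mathbb{P}_{(\epsilon,\theta,Q)}(\|\hat\theta - \theta\|^2 \geq c_0^2\epsilon^2/4) \geq 1/2$ for every estimator $\hat\theta$.

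Taking the larger of the two bounds (with halved probability constants as needed) produces the claimed rate for $\epsilon \leq 1/2$. For $\epsilon > 1/2$ the TV-matching condition may fail, but monotonicity of the minimax risk in $\epsilon$ (a larger contamination level means a richer adversary class) reduces the problem to the bound proved at $\epsilon = 1/2$, which already dominates $\epsilon^2 \leq 1$ up to constants. The one step that requires real care is the construction in the second paragraph: the algebraic identity $(1-\epsilon)(P_{\theta_1}-P_{\theta_0}) = \epsilon(Q_0 - Q_1)$ must be realized by genuine probability measures, which forces $\TV(P_{\theta_0}, P_{\theta_1}) \lesssim \epsilon/(1-\epsilon)$. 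The TV distance between two unit-variance Gaussians being linear in the mean separation is precisely what fixes the separation at the $\epsilon$-scale and hence produces the $\epsilon^2$-order lower bound on squared error.
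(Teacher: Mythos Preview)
Your approach is correct and essentially the same as the paper's: the paper packages the $\epsilon^2$ part into a general modulus-of-continuity lower bound (Theorem~\ref{thm:lower}), whose proof likewise builds contaminations from the positive and negative parts of $p_{\theta_2}-p_{\theta_1}$ so that the two mixtures coincide exactly, and then applies Le Cam; the $p/n$ part is cited as classical, and your monotonicity-in-$\epsilon$ step is the paper's Lemma~\ref{lem:qiantao}. One slip to fix: with your convention $P_{\theta_1}-P_{\theta_0}=\mu_+-\mu_-$, the contamination attached to $\theta_0$ must carry $\mu_+$ (mass where $P_{\theta_1}$ exceeds $P_{\theta_0}$) and $Q_1$ must carry $\mu_-$; as written the two mixtures differ by $2(1-\epsilon)(\mu_+-\mu_-)$ rather than agreeing. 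The paper's variant avoids your auxiliary $\nu$ by working at the level $\epsilon'$ solving $\TV=\epsilon'/(1-\epsilon')$ and then embedding into the $\epsilon$-model, but this is cosmetic.
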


Theorem \ref{thm:location lower} provides a minimax lower bound for the $\epsilon$-contamination model. It implies that as long as $\epsilon^2\gtrsim p/n$, the usual minimax rate $p/n$ for estimating $\theta$ is no longer achievable.  It also justifies the optimality of Tukey's median from a minimax perspective. To summarize, Theorem \ref{thm:location upper} and Theorem \ref{thm:location lower} jointly provide a framework for robust statistics that characterize both rate optimality and resistance to outliers simultaneously.

Another natural robust estimator for location is the componentwise median, defined as $\hat{\theta}=(\hat{\theta}_1,...,\hat{\theta}_p)^T$ with $\hat{\theta}_j=\textsf{Median}(\{X_{ij}\}_{i=1}^n)$.
We show that the componentwise median has an inferior convergence rate via the following proposition. 

\begin{proposition}
\label{pro:location cm}
Consider the componentwise median $\hat{\theta}$. There exist absolute constants $C,c>0$ such that
$$\sup_{\theta,Q}\mathbb{P}_{(\epsilon,\theta,Q)}\left\{\Norm{\hat{\theta}-\theta}^2\geq Cp\left(\frac{1}{n}\vee\epsilon^2\right)\right\}\geq c,$$
for any $\epsilon\in[0,1]$.
\end{proposition}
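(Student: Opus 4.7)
\noindent\textit{Proof sketch.}
The plan is to prove, coordinate by coordinate, that the one-dimensional sample median has squared error of order $1/n\vee\epsilon^2$, and then to aggregate the $p$ per-coordinate errors. By translation invariance we may always take $\theta=0$. Two regimes are handled separately.

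\textit{Regime $\epsilon^2\geq 1/n$.} Choose $Q=\delta_{a\mathbf{1}_p}$ for a large absolute constant $a>0$. Let $Z_i\in\{0,1\}$ indicate whether $X_i$ is drawn from $Q$ and condition on $(Z_1,\dots,Z_n)$; on the good samples $\{X_{i,j}:Z_i=0\}$ the entries are i.i.d.\ $N(0,1)$ in both $i$ and $j$, so conditionally the coordinate medians $\hat\theta_1,\dots,\hat\theta_p$ are mutually independent, each equal in law to the median of $|S|$ standard normals together with $k=n-|S|$ copies of $a$. For $a$ large this population median equals $\Phi^{-1}\!\big(1/[2(1-k/n)]\big)\asymp k/n$, while the standard empirical-quantile fluctuation about it is $O(1/\sqrt n)$. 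On the event $k\geq n\epsilon/2$ (which fails with probability at most $e^{-cn\epsilon^2}$ by a Chernoff bound on the binomial), we therefore have $\mathbb{E}[\hat\theta_j^2\mid Z]\asymp\epsilon^2$ with $\Var(\hat\theta_j^2\mid Z)=O(\epsilon^4)$, and Chebyshev applied to the conditionally i.i.d.\ sum $\sum_j\hat\theta_j^2$ (mean $\asymp p\epsilon^2$, variance $O(p\epsilon^4)$) produces $\sum_j\hat\theta_j^2\geq c\,p\epsilon^2$ with probability at least some constant $c'>0$.

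\textit{Regime $\epsilon^2<1/n$.} Take $Q=N(0,I_p)$, so the observed sample is i.i.d.\ $N(0,I_p)$ and the coordinate medians $\hat\theta_1,\dots,\hat\theta_p$ are independent sample medians of $n$ standard normals. Classical order-statistics analysis (or Bahadur's representation) gives $\mathbb{E}\hat\theta_j^2\asymp 1/n$ and $\Var(\hat\theta_j^2)=O(1/n^2)$, whence $\sum_j\hat\theta_j^2$ has mean $\asymp p/n$ and variance $O(p/n^2)$, and Chebyshev once more yields $\sum_j\hat\theta_j^2\geq c\,p/n$ with constant probability.

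The main obstacle is the contamination regime: one must both design a contamination whose effect on each one-dimensional median is at the $\epsilon$ scale, not washed out by the $O(1/\sqrt n)$ sample fluctuation, and then aggregate the per-coordinate biases despite the dependence induced across coordinates by shared outlier indicators. Conditioning on $(Z_1,\dots,Z_n)$ is the crucial device, since it makes the good samples fully independent across the $p$ coordinates and reduces the aggregation to a routine second-moment calculation.
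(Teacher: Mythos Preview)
Your approach is correct and follows the same overall strategy as the paper: split into the two regimes $\epsilon^2\gtrsim 1/n$ and $\epsilon^2\lesssim 1/n$, use the point-mass contamination $Q=\delta_{a\mathbf 1_p}$ in the first regime, condition on the contamination pattern to obtain independence across coordinates, and then aggregate. The differences are at the level of implementation.

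In the small-$\epsilon$ regime, the paper bypasses all computation: it simply takes $Q=P_\theta$ and invokes the classical minimax lower bound $\inf_{\hat\theta}\sup_\theta P_\theta^n\{\|\hat\theta-\theta\|^2\geq C'p/n\}\geq c$, which automatically applies to the specific estimator $\hat\theta_{\mathrm{cm}}$. Your direct second-moment analysis works too but is more labor.

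In the large-$\epsilon$ regime, the paper also conditions on $(n_1,n_2)$ to exploit coordinatewise independence of the clean samples, but instead of moment bounds on $\hat\theta_j^2$ it introduces DKW-type events
\[
E_j=\Big\{\sup_\eta\big|\tfrac{1}{n_1}\textstyle\sum_i\mathbb{I}\{Y_{ij}\le\eta\}-\Phi(\eta)\big|\le\sqrt{\log(12)/n}\Big\},
\]
shows $E\cap E_j\subset\{\hat\theta_j^2\ge c_2\epsilon^2\}$ deterministically, and aggregates via Hoeffding on the indicators $\sum_j\mathbb{I}_{E_j}$. This avoids the need to control $\Var(\hat\theta_j^2\mid Z)$ and higher moments of the sample median that your Chebyshev step implicitly requires (these bounds are true, but you should state that the sample median is sub-Gaussian around its target quantile to justify $\Var(\hat\theta_j^2\mid Z)=O(\epsilon^4)$). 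Also note your Chebyshev bound on $\sum_j\hat\theta_j^2$ degrades to $O(1/p)$ and so needs a separate one-line argument when $p$ is bounded; the paper's indicator-based aggregation has the same feature (``when $p$ is sufficiently large'').
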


Obviously, $p\left(n^{-1}\vee\epsilon^2\right)$ is also the upper bound for $\hat{\theta}$ by applying Theorem \ref{thm:location upper} to each coordinate. Since $p/n\vee\epsilon^2\ll p\left(n^{-1}\vee\epsilon^2\right)$ when $\epsilon^2\gtrsim 1/n$, the componentwise median has a slower convergence rate. It achieves the rate $p/n$ only when $n\epsilon=O(\sqrt{n})$. Therefore, to preserve the rate $p/n$, the componentwise median can tolerate at most $O(\sqrt{n})$ number of outliers, whereas Tukey's median can tolerate $O(\sqrt{pn})$.


\section{Robust Covariance Matrix Estimation}\label{sec:matrix}

In this section, we consider estimating covariance matrices under the $\epsilon$-contamination model. The model is represented as $\mathbb{P}_{(\epsilon ,\Sigma ,Q)}=(1-\epsilon
)P_{\Sigma }+\epsilon Q$, where $P_{\Sigma }=N(0,\Sigma)$ and $Q$ is any distribution. Motivated by Tukey's depth function for location parameters, we introduce a new concept called matrix depth. The robust matrix estimator is defined as the deepest covariance matrix with respect to the observations. This estimator achieves minimax optimal rates under the $\epsilon$-contamination model.

\subsection{Matrix Depth}

The main idea of Tukey's median is to project  multivariate data onto all one-dimensional subspaces and obtain the deepest point by evaluating depths in those one-dimensional subspaces. Such an idea can be used to estimate covariance matrices. Formally speaking, for $X\sim N(0,\Sigma)$, the population median of $|u^TX|^2$ is $\beta u^T\Sigma u$ for every $u\in S^{p-1}$ with some absolute constant $\beta$ defined later. Thus, an estimator of $\Sigma$ can be obtained by estimating variance on every direction. 

Inspired by the above idea, we define the matrix depth of a positive semi-definite $\Gamma \in \mathbb{R}%
^{p\times p}$ with respect to a distribution $\mathbb{P}$ as
\begin{equation*}
\D(\Gamma ,\mathbb{P})=\inf_{u\in S^{p-1}}\min \left\{ \mathbb{P}%
\{|u^{T}X|^{2}\leq u^{T}\Gamma u\},\mathbb{P}\{|u^{T}X|^{2}\geq u^{T}\Gamma
u\}\right\},
\end{equation*}%
where $X\sim \mathbb{P}$. To adapt to various structure constraints in high-dimensional settings, it is also helpful to define matrix depth by a
subset of the directions $S^{p-1}$. Given a subset $\mathcal{U}\subset S^{p-1}$%
, the matrix depth of $\Gamma $ with respect to a distribution $\mathbb{P}$
relative to $\mathcal{U}$ is defined as
\begin{equation*}
\D_{\mathcal{U}}(\Gamma ,\mathbb{P})={\inf_{u\in \mathcal{U}}\min \left\{
\mathbb{P}\{|u^{T}X|^{2}\leq u^{T}\Gamma u\},\mathbb{P}\{|u^{T}X|^{2}\geq
u^{T}\Gamma u\}\right\}}, 
\end{equation*}%
where $X\sim \mathbb{P}$. We adopt the notation $\D_{S^{p-1}}(\Gamma ,\mathbb{P})=\D(\Gamma ,\mathbb{P})$, and when $\mathcal{U}$ is a singleton set, we use $\D_{u}(\Gamma ,\mathbb{P})$ instead of $\D_{\{u\}}(\Gamma ,\mathbb{P})$. At the population level, the following proposition shows that the true covariance matrix, multiplied by a scalar, is the deepest positive semi-definite matrix.

\begin{proposition} \label{prop:truth}
Define $\beta$ through the equation
\begin{equation}
\Phi(\sqrt{\beta})=\frac{3}{4},\label{eq:beta}
\end{equation}
where $\Phi$ is the cumulative distribution function of $N(0,1)$.
Then, for any $\mathcal{U}\subset S^{p-1}$, we have $\mathcal{D}_{\mathcal{U}}(\beta\Sigma,P_{\Sigma})=\frac{1}{2}$.
\end{proposition}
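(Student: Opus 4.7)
The plan is to reduce the statement to a one-dimensional chi-squared computation, direction by direction. Fix any $u \in S^{p-1}$. Since $X \sim N(0,\Sigma)$, the scalar $u^T X$ is distributed as $N(0, u^T\Sigma u)$. I will split on whether $u^T\Sigma u>0$ or $u^T\Sigma u = 0$; the latter is degenerate and causes both inner probabilities to equal $1$, so it cannot attain the minimum unless $\Sigma \equiv 0$.

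Assuming $u^T\Sigma u > 0$, I would write $u^T X = \sqrt{u^T\Sigma u}\, Z$ with $Z \sim N(0,1)$, so
\[
\mathbb{P}\{|u^T X|^2 \le u^T(\beta\Sigma)u\} \;=\; \mathbb{P}\{Z^2 \le \beta\} \;=\; 2\Phi(\sqrt{\beta}) - 1.
\]
Plugging in the defining identity $\Phi(\sqrt\beta)=3/4$ gives $2\Phi(\sqrt\beta)-1 = 1/2$. The other probability is handled identically: because $Z^2$ has a continuous distribution,
\[
\mathbb{P}\{|u^T X|^2 \ge u^T(\beta\Sigma)u\} \;=\; 1 - \mathbb{P}\{Z^2 < \beta\} \;=\; 1 - \tfrac{1}{2} \;=\; \tfrac{1}{2}.
\]
Thus the inner minimum equals $1/2$ for every such $u$, so the outer minimum over any $\mathcal{U}\subset S^{p-1}$ is also $1/2$ (with the degenerate case only pushing the value up to $1$, never below $1/2$).

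There is essentially no obstacle here: the proof is a direct calculation once one observes that projecting a centered Gaussian onto a direction $u$ produces a univariate centered Gaussian whose squared value is a scaled $\chi^2_1$. The constant $\beta$ is specifically calibrated via $\Phi(\sqrt\beta) = 3/4$ precisely so that the upper and lower tail probabilities coincide at $1/2$. The fact that the answer is independent of $u$ (and hence of $\mathcal{U}$) is what makes $\beta\Sigma$ maximally deep.
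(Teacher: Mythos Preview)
Your proposal is correct and follows essentially the same approach as the paper's proof: fix $u$, standardize $u^TX$ to $N(0,1)$, and compute both tail probabilities as $2\Phi(\sqrt\beta)-1$ and $2-2\Phi(\sqrt\beta)$, each equal to $1/2$ by the defining equation for $\beta$. Your explicit treatment of the degenerate case $u^T\Sigma u=0$ is slightly more careful than the paper (which tacitly divides by $\sqrt{u^T\Sigma u}$), but the core argument is identical.
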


Given i.i.d. observations $\{X_{i}\}_{i=1}^{n}$ from $\mathbb{P}$, the
matrix depth of $\Gamma $ with respect to $\{X_{i}\}_{i=1}^{n}$ is defined as
\begin{eqnarray}
\D_{\mathcal{U}}(\Gamma ,\{X_{i}\}_{i=1}^{n})&=&\min_{u\in \mathcal{U}}\min \left\{ \frac{1}{n}\sum_{i=1}^{n}\mathbb{I}
\{|u^{T}X_{i}|^{2}\leq u^{T}\Gamma u\},\right.  \label{def:mat depth empi}\\
&& \qquad \qquad \left. \frac{1}{n}\sum_{i=1}^{n}\mathbb{I}\{|u^{T}X_{i}|^{2}\geq u^{T}\Gamma u\}\right\}. \nonumber
\end{eqnarray}%
{Note that there are only $n+1$ possible values for $\frac{1}{n}\sum_{i=1}^{n}\mathbb{I}
\{|u^{T}X_{i}|^{2}\leq u^{T}\Gamma u\}$, which allows us to use minimum rather than infimum when defining the empirical matrix depth function in (\ref{def:mat depth empi}). We adopt the notation $\D_{S^{p-1}}(\Gamma ,\{X_{i}\}_{i=1}^{n})=\D(\Gamma ,\{X_{i}\}_{i=1}^{n})$.} A general estimator for $\beta\Sigma$ is given by
\begin{equation}
\hat{\Gamma}=\arg\max_{\Gamma\in\mathcal{F}}\mathcal{D}_{\mathcal{U}}(\Gamma,\{X_i\}_{i=1}^n),\label{eq:hatgamma}
\end{equation}
where $\mathcal{F}$ is some matrix class to be specified later. One can either use $\mathcal{F}$ to impose various structure constraints in high-dimensional settings or use it to promote positive-definiteness of the estimator.
The estimator of $\Sigma$ is
\begin{equation}
\hat{\Sigma}=\hat{\Gamma}/\beta ,  \label{eq:estimator of Cov}
\end{equation}
where $\beta$ is defined through (\ref{eq:beta}).

\subsection{General Covariance Matrix}

Consider the
following covariance matrix class with bounded spectra
\begin{equation*}
\mathcal{F}(M)=\left\{ \Sigma =\Sigma ^{T}\in \mathbb{R}^{p\times p}:\Sigma
\succeq 0, s_{\max }(\Sigma )\leq M\right\},
\end{equation*}%
where $\Sigma\succeq 0$ means $\Sigma$ is positive semi-definite and $M>0$ is some absolute constant that does not scale with $p$ or $n$.

To define an estimator,  it is natural to pick $\mathcal{U}= S^{p-1}$. Recall we adopt the notation $\D_{S^{p-1}}(\Gamma ,\{X_{i}\}_{i=1}^{n})=\D(\Gamma ,\{X_{i}\}_{i=1}^{n})$. Define
\begin{equation}
\hat{\Gamma}=\arg \max_{\Gamma \succeq 0}\D(\Gamma
,\{X_{i}\}_{i=1}^{n})\mbox{\rm .}  \label{eq:estimator of Gamma}
\end{equation}%
When (\ref{eq:estimator of Gamma}) has multiple maxima, $\hat{\Gamma}$ is understood as any positive semi-definite
matrix that attains the deepest level.
A final estimator of $\Sigma$ is defined by $\hat{\Sigma}=\hat{\Gamma}/\beta$ as in (\ref{eq:estimator of Cov}).
The error bound of $\hat{\Sigma}$ is stated in
the following theorem.

\begin{thm}\label{thm:matrix upper}
Assume that $\epsilon <1/5$. Then, there exist absolute constants $C,C_1>0$, such that for any $\delta\in(0,1/2)$ satisfying $C_1{\frac{p+\log(1/\delta)}{n}}<1$, we have
\begin{equation*}
\opnorm{\hat{\Sigma}-\Sigma}^2 \leq C\left(
\left(\frac{p}{n}\vee\epsilon^2\right)+\frac{\log(1/\delta)}{n}\right) ,
\end{equation*}%
with $\mathbb{P}_{(\epsilon ,\Sigma ,Q)}$-probability at least $1-2\delta$ uniformly over all $Q$
and $\Sigma \in \mathcal{F}(M)$.
\end{thm}


The convergence rate for the deepest covariance is $(p/n)\vee\epsilon^2$ under the squared operator norm. A matching lower bound is given by the following theorem.

\begin{thm}\label{thm:l1}
There exist some absolute constants $C,c>0$ such that
$$\inf_{\hat{\Sigma}}\sup_{\Sigma\in\mathcal{F}(M)}\sup_Q\mathbb{P}_{(\epsilon,\Sigma,Q)}\left\{\opnorm{\hat{\Sigma}-\Sigma}^2\geq C\left(\frac{p}{n}\vee\epsilon^2\right)\right\}\geq c,$$
for any $\epsilon\in[0,1]$.
\end{thm}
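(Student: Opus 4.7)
The plan is to prove the two terms $p/n$ and $\epsilon^2$ separately; since $a\vee b \leq a+b \leq 2(a\vee b)$, a constant-probability lower bound of either order implies the claim after adjusting $C$.

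\textbf{The $p/n$ term.} Set $Q=P_\Sigma$ so that the supremum over contaminations reduces to the uncontaminated problem. A standard multi-point Fano argument works: let $\mathcal{A}\subset\{\pm 1\}^p$ be a Varshamov--Gilbert packing of size $\geq 2^{cp}$ with pairwise Hamming distance $\geq p/4$, put $v_\omega = \omega/\sqrt{p}$, and consider $\Sigma_\omega = I_p + \tau v_\omega v_\omega^T$ with $\tau = c_0\sqrt{p/n}$ for a small constant $c_0$. Each $\Sigma_\omega$ lies in $\mathcal{F}(M)$, and for distinct $\omega,\omega'\in\mathcal{A}$ the inner product $v_\omega^T v_{\omega'}\leq 1/2$ forces
\begin{equation*}
\opnorm{\Sigma_\omega-\Sigma_{\omega'}} = \tau\sqrt{1-(v_\omega^T v_{\omega'})^2}\geq \tau\sqrt{3}/2.
\end{equation*}
A direct Sherman--Morrison computation gives $D(P_{\Sigma_\omega}\|P_{\Sigma_{\omega'}}) = \frac{\tau^2(1-(v_\omega^T v_{\omega'})^2)}{2(1+\tau)}\leq \tau^2/2$, so the average pairwise KL divergence across $n$ i.i.d.\ samples is at most $n\tau^2/2\lesssim p\lesssim \log|\mathcal{A}|$. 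Fano's inequality then yields a constant-probability lower bound of order $\tau^2\asymp p/n$.

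\textbf{The $\epsilon^2$ term.} Use a two-point Le Cam argument that exploits the modulus-of-continuity phenomenon under Huber contamination. Pick $\Sigma_1 = I_p$ and $\Sigma_2 = I_p - c_1\epsilon\, e_1 e_1^T$ with a small $c_1\in(0,1)$; both lie in $\mathcal{F}(M)$ and $\opnorm{\Sigma_1-\Sigma_2} = c_1\epsilon$. Since $P_{\Sigma_1}$ and $P_{\Sigma_2}$ agree in every coordinate except the first,
\begin{equation*}
\TV(P_{\Sigma_1},P_{\Sigma_2}) = \TV\bigl(N(0,1),\,N(0,1-c_1\epsilon)\bigr)\lesssim c_1\epsilon,
\end{equation*}
so $c_1$ can be chosen small enough that $\TV(P_{\Sigma_1},P_{\Sigma_2})\leq \epsilon/(1-\epsilon)$. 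Taking the Jordan decomposition $P_{\Sigma_1}-P_{\Sigma_2} = \mu_+-\mu_-$ with $\mu_\pm(\mathbb{R}^p) = \TV(P_{\Sigma_1},P_{\Sigma_2})=:\delta$ and setting $Q_1 = \frac{1-\epsilon}{\epsilon}\mu_- + (1-\frac{(1-\epsilon)\delta}{\epsilon})R$ and $Q_2 = \frac{1-\epsilon}{\epsilon}\mu_+ + (1-\frac{(1-\epsilon)\delta}{\epsilon})R$ for any common probability measure $R$ produces two valid probability distributions satisfying
\begin{equation*}
(1-\epsilon)P_{\Sigma_1}+\epsilon Q_1 = (1-\epsilon)P_{\Sigma_2}+\epsilon Q_2.
\end{equation*}
The two $\epsilon$-contamination laws coincide as measures, so by Le Cam's two-point lemma the minimax squared operator-norm error is at least $\opnorm{\Sigma_1-\Sigma_2}^2/4\asymp\epsilon^2$ with probability bounded below by an absolute constant.

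\textbf{Main obstacle.} The delicate point is the choice of perturbation in the second step. A naive global rescaling $\Sigma_2 = (1-c\epsilon)I_p$ would give $\TV(P_{\Sigma_1},P_{\Sigma_2})\asymp \epsilon\sqrt{p}$ by Pinsker, violating the contamination budget in high dimensions and producing only a rate of $\epsilon^2/p$. Restricting the perturbation to a single coordinate confines the information to a scalar subproblem and keeps the TV distance at $O(\epsilon)$ uniformly in $p$, which is precisely what is needed for the fusion step $(1-\epsilon)P_{\Sigma_1}+\epsilon Q_1 = (1-\epsilon)P_{\Sigma_2}+\epsilon Q_2$ to succeed. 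Once this matching is arranged, both Le Cam's bound for the contamination term and Fano's bound for the classical term follow the standard template.
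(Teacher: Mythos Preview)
Your proposal is correct and follows essentially the same route as the paper: the paper invokes its general modulus-of-continuity lower bound (Theorem~\ref{thm:lower}) with the single-coordinate perturbation $\Sigma_1=I_p$, $\Sigma_2=I_p+\epsilon E_{11}$ and then cites \cite{ma2013volume} for $\mathcal{M}(0)\asymp p/n$, whereas you unpack both pieces directly---your Le~Cam fusion via the Jordan decomposition of $P_{\Sigma_1}-P_{\Sigma_2}$ is exactly the construction carried out inside the proof of Theorem~\ref{thm:lower}.

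One small slip in the Fano step: Varshamov--Gilbert gives only $d_H(\omega,\omega')\geq p/4$, hence $v_\omega^T v_{\omega'}\leq 1/2$, but your separation bound $\opnorm{\Sigma_\omega-\Sigma_{\omega'}}=\tau\sqrt{1-(v_\omega^T v_{\omega'})^2}\geq\tau\sqrt{3}/2$ actually needs $|v_\omega^T v_{\omega'}|\leq 1/2$. Near-antipodal pairs $\omega'\approx -\omega$ have $v_\omega v_\omega^T\approx v_{\omega'}v_{\omega'}^T$, so $\Sigma_\omega\approx\Sigma_{\omega'}$ and the packing collapses. The fix is routine---take a two-sided packing with $p/8\leq d_H\leq 7p/8$ (probabilistic deletion), or just cite the classical $p/n$ lower bound as the paper does.
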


Theorem \ref{thm:matrix upper} and Theorem \ref{thm:l1} show that the minimax rate for estimating a covariance matrix under Huber's $\epsilon$-contamination model is $(p/n)\vee\epsilon^2$. The part $p/n$ is the classical parametric rate \citep{davidson2001local} for estimating a covariance matrix without contamination under the squared spectral norm.

\subsection{Bandable Covariance Matrix}\label{sec:banded}

In many high-dimensional applications such as time series data in finance, the covariates of data are collected in an ordered fashion. This leads to a natural banded estimator of the covariance matrix \citep{bickel2008regularized, cai2010optimal}.
Define the class of covariance matrices with a banded structure  by
$$\mathcal{F}_k=\{\Sigma=(\sigma_{ij}) \succeq 0: \sigma_{ij}=0\text{ if }|i-j|>k\}.$$
Next, we propose a notion of matrix depth function relative to some subset $\mathcal{U}_k\subset S^{p-1}$ defined particularly for the class $\mathcal{F}_k$. For any $l_1,l_2\in[p]$, define $\mathcal{V}_{[l_1,l_2]}=\{u=(u_i)\in S^{p-1}: u_i=0\text{ if }i\notin[l_1,l_2]\}$. Then $\mathcal{V}_{[l_1,l_2]}$ is equivalent to $S^{l_2-l_1}$ on the coordinates $\{l_1,...,l_2\}$.  The depth function is defined relatively to the following subset
$$\mathcal{U}_k=\cup_{l=1}^{p+1-2k}\mathcal{V}_{[l,l+2k-1]}\text{ if }2k\leq p,\quad\text{and}\quad\mathcal{U}_k=\mathcal{V}_{[1,p]}=S^{p-1}\text{ if }2k>p.$$
Then, a robust covariance matrix estimator with banded structure is defined as
\begin{equation}
\hat{\Gamma}=\arg\max_{\Gamma\in\mathcal{F}_k}\mathcal{D}_{\mathcal{U}_k}(\Gamma,\{X_i\}_{i=1}^n).\label{eq:bandgamma}
\end{equation}
An estimator for $\Sigma$ is $\hat{\Sigma}=\hat{\Gamma}/\beta$ as in (\ref{eq:estimator of Cov}).


To study the convergence rate of $\hat{\Sigma}$, we consider the class $\mathcal{F}_k(M)=\mathcal{F}_k\cap\mathcal{F}(M)$. The convergence rate of $\hat{\Sigma}$ under the $\epsilon$-contamination model is stated in the following theorem.

\begin{thm} \label{thm:band}
Assume that $\epsilon <1/5$. Then, there exist absolute constants $C,C_1>0$, such that for any $\delta\in(0,1/2)$ satisfying $C_1\left(\frac{k+\log p}{n}.\right)+\left.\frac{\log(1/\delta)}{n}\right)<1$, we have
\begin{equation*}
\opnorm{\hat{\Sigma}-\Sigma}^2 \leq C\left(
\left(\frac{k+\log p}{n}\vee\epsilon^2\right)+\frac{\log(1/\delta)}{n}\right) ,
\end{equation*}
with $\mathbb{P}_{(\epsilon ,\Sigma ,Q)}$-probability at least $1-2\delta$ uniformly over all $Q$
and $\Sigma \in \mathcal{F}_k(M)$.
\end{thm}

Theorem \ref{thm:band} states that the convergence rate for $\hat{\Sigma}$ under the class $\mathcal{F}_k(M)$ is $\frac{k+\log p}{n}\vee\epsilon^2$. When $\epsilon^2\lesssim \frac{k+\log p}{n}$, this is exactly the minimax rate in \cite{cai2010optimal}. Therefore, Theorem \ref{thm:band} extends the result of \cite{cai2010optimal} to a robust setting. If the rate $\frac{k+\log p}{n}$ is pursued, then the maximum number of outliers that $\hat{\Sigma}$ can tolerate is $O(\sqrt{n(k+\log p)})$.

Besides matrices with exact banded structure, we also consider the following class of bandable matrices, in which the variables $X_i$ and $X_j$ become less correlated for larger $|i-j|$. That is,
\begin{eqnarray*}
\mathcal{F}_{\alpha}(M,M_0,M_{\min})&=&\Big\{\Sigma=(\sigma_{ij})\in\mathcal{F}(M): \max_j\sum_{\{i:|i-j|>k\}}|\sigma_{ij}|\leq M_0k^{-\alpha},\\
&& s_{\min}(\Sigma)>M_{\min}\Big\},
\end{eqnarray*}
where $M_0>0$ and $0<M_{\min}<M$ are some absolute constants that do not scale with $p$ or $n$. This covariance class is mainly motivated by many scientific applications such as climatology and spectroscopy. See, for example, \cite{friston1994analysis} and \cite{visser1995trend}. The parameter $\alpha$ specifies how fast the magnitude of $\sigma_{ij}$ decays to zero along the off-diagonal direction.
\begin{thm} \label{thm:bandable}
Consider the robust banded estimator $\hat{\Sigma}$ in Theorem \ref{thm:band} with $k=\ceil{n^{\frac{1}{2\alpha+1}}}\wedge p$.
In addition, we assume that $\epsilon <1/5$. Then, there exist absolute constants $C,C_1>0$, such that for any $\delta\in(0,1/2)$ satisfying $C_1{\frac{\min(n^{\frac{1}{2\alpha+1}}+\log p,p)+\log(1/\delta)}{n}}<1$, we have
\begin{equation*}
\opnorm{\hat{\Sigma}-\Sigma}^2 \leq C\left(
\left(\min\left\{n^{-\frac{2\alpha}{2\alpha+1}}+\frac{\log p}{n},\frac{p}{n}\right\}\vee\epsilon^2\right)+\frac{\log(1/\delta)}{n}\right),
\end{equation*}
with $\mathbb{P}_{(\epsilon ,\Sigma ,Q)}$-probability at least $1-2\delta$ uniformly over all $\Sigma \in \mathcal{F}_{\alpha}(M,M_0,M_{\min})$ and $Q$.
\end{thm}
\begin{remark}
	Unlike Theorem \ref{thm:band}, in Theorems \ref{thm:bandable} we impose a
	condition $s_{\min }(\Sigma )>M_{\min }$ on the smallest eigenvalue of $%
	\Sigma $ while the minimax rate-optimal result in Cai, Zhang and Zhou (2010) does
	not require such a condition in the uncontaminated setting. The reason we
	consider a slightly smaller parameter space is mainly due to our depth-based
	estimation approach. Indeed, since a bandable matrix $\Sigma $ is not
	necessarily banded, the analysis naturally takes a bias-variance tradeoff
	with the pivotal matrix being $\Sigma _{k}=(\sigma _{ij}\mathbb{I}%
	\{|i-j|\leq k\})$, a banded version of $\Sigma $. Our analysis measures
	the bias via the matrix depth. The condition on $s_{\min }(\Sigma )$ guarantees
	the proper behavior of the depth of $\Sigma _{k}$, which can be well
	controlled solely by the bandwidth $k$.
\end{remark}

To close this section, we show in the following theorem that both rates in Theorem \ref{thm:band} and Theorem \ref{thm:bandable} are minimax optimal under the $\epsilon$-contamination model.
\begin{thm}\label{thm:l2}
Assume $p\leq\exp(\gamma n)$ for some $\gamma>0$.
There exist some absolute constants $C,c>0$ such that
$$\inf_{\hat{\Sigma}}\sup_{\Sigma\in\mathcal{F}_k(M)}\sup_Q\mathbb{P}_{(\epsilon,\Sigma,Q)}\left\{\opnorm{\hat{\Sigma}-\Sigma}^2\geq C\left(\frac{k+\log p}{n}\vee\epsilon^2\right)\right\}\geq c,$$
and
$$\inf_{\hat{\Sigma}}\sup_{\Sigma\in\mathcal{F}_{\alpha}}\sup_Q\mathbb{P}_{(\epsilon,\Sigma,Q)}\left\{\opnorm{\hat{\Sigma}-\Sigma}^2\geq C\left(
\min\left\{n^{-\frac{2\alpha}{2\alpha+1}}+\frac{\log p}{n},\frac{p}{n}\right\}\vee\epsilon^2\right)\right\}\geq c,$$
for any $\epsilon\in[0,1]$, where $\mathcal{F}_{\alpha}=\mathcal{F}_{\alpha}(M,M_0,M_{\min})$.
\end{thm}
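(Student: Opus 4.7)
The plan is to decompose each rate as $\mathcal{M}(\epsilon) \asymp \mathcal{M}(0) \vee \epsilon^2$ and prove the two pieces as separate lower bounds, then take the maximum. The maximum of two valid lower bounds is itself a valid lower bound, so it suffices to handle them in turn.

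For the uncontaminated piece $\mathcal{M}(0)$, I would note that the $\epsilon$-contamination family $\{\mathbb{P}_{(\epsilon,\Sigma,Q)}: Q\}$ contains the point $P_\Sigma$ itself (take $Q = P_\Sigma$), so every classical minimax lower bound at $\epsilon = 0$ transfers verbatim with the same constant probability. The rates $(k + \log p)/n$ for $\mathcal{F}_k(M)$ and $n^{-2\alpha/(2\alpha+1)} + \log p/n$ for $\mathcal{F}_\alpha(M, M_0)$ are standard and I would quote them from \cite{cai2010optimal}. The residual $p/n$ part of the bandable lower bound is only relevant when $p \lesssim n^{1/(2\alpha+1)}$; in that regime, a rank-one construction $\Sigma_v = \frac{M}{2} I + (c/\sqrt{n}) v v^{T}$, with $v$ ranging over an exponential packing of $S^{p-1}$, still lies in $\mathcal{F}_\alpha(M, M_0)$ because the off-band $\ell_1$ mass is $O(p/\sqrt{n}) = O(1)$, and Fano's inequality then delivers the desired $p/n$ bound.

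For the contamination piece $\epsilon^2$, I would invoke the general lower bound from Section \ref{sec:lower}, which asserts that the $\epsilon$-contamination minimax risk is bounded below by the squared modulus of continuity $\omega(\epsilon, \mathcal{F})^2$. It suffices to exhibit $\Sigma_0, \Sigma_1 \in \mathcal{F}$ with $\opnorm{\Sigma_0 - \Sigma_1} \gtrsim \epsilon$ and $\TV(N(0, \Sigma_0), N(0, \Sigma_1)) \lesssim \epsilon$. I would take $\Sigma_0 = \frac{M}{2} I$ and $\Sigma_1 = \Sigma_0 + c\epsilon e_1 e_1^{T}$ for a small constant $c$: both matrices are diagonal with spectrum bounded by $M$, so both lie in $\mathcal{F}_k(M)$ and in $\mathcal{F}_\alpha(M, M_0)$ (diagonals trivially satisfy any bandable decay condition); the operator norm gap is $c\epsilon$; and Pinsker's inequality combined with the explicit Gaussian KL formula yields $\TV \lesssim \epsilon$.

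The main obstacle is the $p/n$ piece of the bandable lower bound, where dense rank-one perturbations do not a priori satisfy the off-band $\ell_1$ decay defining $\mathcal{F}_\alpha(M, M_0)$. The saving grace is that this piece is active only when $p$ is small compared to $n^{1/(2\alpha+1)}$, so the $1/\sqrt{n}$ perturbation scale is small enough to absorb the $p$-dependence in the off-band mass. Everything else reduces to standard Gaussian total-variation computations and an appeal to the framework of Section \ref{sec:lower}.
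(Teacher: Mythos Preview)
Your proposal is correct and follows essentially the same route as the paper. Both arguments invoke the general framework of Theorem~\ref{thm:lower} to split the lower bound as $\mathcal{M}(0)\vee\omega(\epsilon,\Theta)$, establish $\omega(\epsilon,\Theta)\gtrsim\epsilon^2$ via a one-entry diagonal perturbation (the paper uses $\Sigma_1=I_p$ and $\Sigma_2=I_p+\epsilon E_{11}$, which is your construction up to the choice of base matrix), and cite \cite{cai2010optimal} for the uncontaminated rates $\mathcal{M}(0)$.

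One remark on your extra $p/n$ argument for the bandable class: the paper does not treat this regime separately. It simply cites Cai--Zhou for $\mathcal{M}(0)\asymp n^{-2\alpha/(2\alpha+1)}+\log p/n$ and observes that this already dominates the target $\min\{n^{-2\alpha/(2\alpha+1)}+\log p/n,\,p/n\}$, so no additional construction is needed for the lower bound as stated. Your rank-one Fano construction is a reasonable fallback if one worries about the validity of the Cai--Zhou bound in very low dimension, but note that your off-band $\ell_1$ bookkeeping (``$O(p/\sqrt{n})=O(1)$'') only checks the constraint at $k=1$; membership in $\mathcal{F}_\alpha(M,M_0)$ requires the decay $M_0 k^{-\alpha}$ for every $k$, so a packing with entries scaled like $1/\sqrt{p}$ (giving off-band mass $O(1/\sqrt{n})$ uniformly) is cleaner and works for all $\alpha>0$ in the regime $p\lesssim n^{1/(2\alpha+1)}$.
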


Theorem \ref{thm:band}, Theorem \ref{thm:bandable} and Theorem \ref{thm:l2} give minimax rates for the classes of banded and bandable covariance matrices. When $\epsilon=0$, the minimax rates of the two classes are given in \cite{cai2010optimal}. Both rates are achieved by a tapered sample covariance estimator when there is no contamination. In comparison, when $\epsilon>0$, we achieve the minimax rate by incorporating the structural assumption into the definition of the matrix depth function.

\subsection{Sparse Covariance Matrix}\label{sec:sparsem}

We consider sparse covariance matrices in this section.
For a subset of coordinates $S\subset[p]$, define $\mathcal{G}(S)=\{G=(g_{ij})\in\mathbb{R}^{p\times p}: g_{ij}=0\text{ if }i\notin S\text{ or }j\notin S\}$. Define $\mathcal{G}(s)=\cup_{S\subset[p]: |S|\leq s}\mathcal{G}(S)$. Then, the sparse covariance class is
$$\mathcal{F}_s=\left\{\Sigma\succeq 0: \Sigma-\text{diag}(\Sigma)\in\mathcal{G}(s)\right\}.$$
In other words, there are $s$ covariates in a block that are correlated with each other. The remaining covariates are independent from this block and from each other. Such sparsity structure has been extensively studied in the problem of sparse principal component analysis \citep{johnstone09,ma13,vu12,cai2013sparse}, and is different from the notion of degree sparsity studied in \cite{bickel2008covariance,cai2012optimal}. Estimating the whole covariance matrix under such sparsity was considered by \cite{cai13}.

To take advantage of the sparsity structure, we define a subset $\mathcal{U}_s\subset S^{p-1}$ for the matrix depth function. For any $S\subset [p]$, define $\mathcal{V}_{S}=\{u=(u_i)\in S^{p-1}: u_i=0\text{ if }i\notin S\}$. The depth function is defined relatively to the following subset
$$\mathcal{U}_s=\cup_{S\subset[p]:|S|=2s}\mathcal{V}_S.$$
A robust sparse covariance matrix estimator is defined by
\begin{equation}
\hat{\Gamma}=\arg\max_{\Gamma\in\mathcal{F}_s}\mathcal{D}_{\mathcal{U}_s}(\Gamma,\{X_i\}_{i=1}^n). \label{eq:sparsegamma}
\end{equation}
An estimator for $\Sigma$ is $\hat{\Sigma}=\hat{\Gamma}/\beta$ as in (\ref{eq:estimator of Cov}).

The error of $\hat{\Sigma}$ is studied in the class $\mathcal{F}_s(M)=\mathcal{F}_s\cap \mathcal{F}(M)$ under the $\epsilon$-contamination model.
\begin{thm}\label{thm:sparse}
Assume that $\epsilon <1/5$. Then, there exist absolute constants $C,C_1>0$, such that for any $\delta\in(0,1/2)$ satisfying $C_1{\frac{s\log\frac{ep}{s}+\log(1/\delta)}{n}}<1$, we have
\begin{equation*}
\opnorm{\hat{\Sigma}-\Sigma}^2 \leq C\left(
\left(\frac{s\log\frac{ep}{s}}{n}\vee\epsilon^2\right)+\frac{\log(1/\delta)}{n}\right) ,
\end{equation*}
with $\mathbb{P}_{(\epsilon ,\Sigma ,Q)}$-probability at least $1-2\delta$ uniformly over all $Q$
and $\Sigma \in \mathcal{F}_s(M)$.
\end{thm}

The next theorem shows that the upper bound in Theorem \ref{thm:sparse} is optimal under the $\epsilon$-contamination model.
\begin{thm}\label{thm:l3}
There are some absolute constants $C,C_1,c>0$ such that as long as $\frac{s\log\frac{ep}{s}}{n}\leq C_1$ holds, then
$$\inf_{\hat{\Sigma}}\sup_{\Sigma\in\mathcal{F}_s(M)}\sup_Q\mathbb{P}_{(\epsilon,\Sigma,Q)}\left\{\opnorm{\hat{\Sigma}-\Sigma}^2\geq C\left(\frac{s\log\frac{ep}{s}}{n}\vee\epsilon^2\right)\right\}\geq c,$$
for any $\epsilon\in[0,1]$.
\end{thm}

Theorem \ref{thm:sparse} and Theorem \ref{thm:l3} together show that the minimax rate of the covariance matrix class $\mathcal{F}_s(M)$ under the $\epsilon$-contamination model is $\frac{s\log\frac{ep}{s}}{n}\vee\epsilon^2$. When $\epsilon=0$, the rate $\frac{s\log\frac{ep}{s}}{n}$ is obtained by \cite{cai13} for a closely related matrix class that is a subset of $\mathcal{F}_s(M)$.

\subsection{Sparse Principal Component Analysis}

As an application of Theorem \ref{thm:sparse}, we consider sparse principal component analysis. We adopt the spiked covariance model \citep{johnstone09,birnbaum13}. That is,
$$\Sigma=V\Lambda V^T+I_p,$$
where $V\in\mathbb{R}^{p\times r}$ is an orthonormal matrix and $\Lambda$ is a diagonal matrix with elements $\lambda_1\geq\lambda_2\geq...\geq\lambda_r>0$.  When $V$ has $s$ nonzero rows \citep{cai2013sparse,cai13}, $\Sigma$ is in the class $\mathcal{F}_s$. The goal is to estimate the subspace projection matrix $VV^T$. We propose a robust estimator by applying singular value decomposition to $\hat{\Gamma}$ in (\ref{eq:sparsegamma}). That is, define $\hat{V}\in O(p,r)$ to be the matrix whose $l$th column is the $l$th eigenvector of $\hat{\Gamma}$. Then, $\hat{V}\hat{V}^T$ is a robust estimator of $VV^T$.

To study the convergence rate of $\hat{V}$, define the covariance matrix class as
\begin{eqnarray*}
\mathcal{F}_{s,\lambda}(M,r)&=&\left\{\Sigma=V\Lambda V^T+I_p: \lambda\leq\lambda_r\leq...\lambda_1\leq M, V\in O(p,r), \right.\\
&& \left. |\supp(V)|\leq s\right\},
\end{eqnarray*}
where $O(p,r)$ is the class of $p\times r$ orthonormal matrices and $\supp(V)$ is the set of nonzero rows of $V$. The rank $r$ is assumed to be bounded by a constant.

\begin{thm}\label{thm:pca}
Assume that $\epsilon < 1/5$. Then, there exist absolute constants $C,C_1,C_2>0$, such that for any $\delta\in(0,1/2)$ satisfying $C_1\left(\left(\frac{s\log\frac{ep}{s}}{n\lambda^2}\vee\frac{\epsilon^2}{\lambda^2}\right)\right.$ $\left.+\frac{\log(1/\delta)}{n\lambda^2}\right)\leq 1$ and $r\leq C_2$,  we have
\begin{equation*}
\fnorm{\hat{V}\hat{V}-VV^T}^2 \leq C\left(
\left(\frac{s\log\frac{ep}{s}}{n\lambda^2}\vee\frac{\epsilon^2}{\lambda^2}\right)+\frac{\log(1/\delta)}{n\lambda^2}\right) ,
\end{equation*}
with $\mathbb{P}_{(\epsilon ,\Sigma ,Q)}$-probability at least $1-2\delta$  uniformly over all $\Sigma \in \mathcal{F}_{s,\lambda}(M,r)$ and $Q$.
\end{thm}

According to Theorem \ref{thm:pca}, the convergence rate for principal subspace estimation is $\frac{s\log\frac{ep}{s}}{n\lambda^2}\vee \frac{\epsilon^2}{\lambda^2}$. We have the rate $\epsilon^2/\lambda^2$ instead of the usual $\epsilon^2$ to account for the outliers in the previous cases. As shown in the next theorem, the rate $\epsilon^2/\lambda^2$ is in fact optimal for sparse principal component analysis.
\begin{thm} \label{thm:l4}
There exist some absolute constants $C,c,c'>0$ such that
$$\inf_{\hat{\Sigma}}\sup_{\Sigma\in\mathcal{F}_{s,\lambda}(M,r)}\sup_Q\mathbb{P}_{(\epsilon,\Sigma,Q)}\left\{\fnorm{\hat{V}\hat{V}-VV^T}^2\geq C\left(\frac{s\log\frac{ep}{s}}{n\lambda^2}\vee\frac{\epsilon^2}{\lambda^2}\right)\wedge c'\right\}\geq c,$$
for any $\epsilon\in[0,1]$.
\end{thm}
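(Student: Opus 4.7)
The plan is to split the stated lower bound into its two natural regimes and prove each separately. Specifically, I want to establish
\begin{equation*}
\inf_{\hat V}\sup_{\Sigma,Q}\mathbb{P}_{(\epsilon,\Sigma,Q)}\left\{\fnorm{\hat V\hat V^T-VV^T}^2\geq C_1\tfrac{s\log(ep/s)}{n\lambda^2}\right\}\geq c
\end{equation*}
and, separately,
\begin{equation*}
\inf_{\hat V}\sup_{\Sigma,Q}\mathbb{P}_{(\epsilon,\Sigma,Q)}\left\{\fnorm{\hat V\hat V^T-VV^T}^2\geq C_2\bigl(\epsilon^2/\lambda^2\bigr)\wedge c'\right\}\geq c;
\end{equation*}
taking the maximum of the two rates then yields the theorem.

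For the first inequality (the classical sparse PCA minimax rate at $\epsilon=0$), I would simply choose $Q=P_\Sigma$ so the contamination model degenerates to $N(0,\Sigma)^{\otimes n}$ and then apply the standard Fano argument as in \cite{cai2013sparse} and \cite{vu12}. The construction is the usual local-Grassmannian packing: fix an $s$-element coordinate block, build an exponentially large family $\{V_1,\ldots,V_N\}$ of $s$-sparse orthonormal $p\times r$ matrices via a Varshamov--Gilbert-style selection, verify the separation $\fnorm{V_jV_j^T-V_kV_k^T}^2\gtrsim \rho^2$ and a pairwise KL bound of the form $D\bigl(N(0,V_j\Lambda V_j^T+I_p)^{\otimes n}\,\Vert\,N(0,V_k\Lambda V_k^T+I_p)^{\otimes n}\bigr)\lesssim n\lambda^2\rho^2$, then calibrate $\rho^2\asymp s\log(ep/s)/(n\lambda^2)$ so that $\log N$ dominates the KL. Nothing in this step interacts with $\epsilon$, so this piece is essentially inherited from the literature.

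For the second inequality (the contamination rate) I would invoke the general modulus-of-continuity lower bound from Section~\ref{sec:lower}: whenever two distributions $P_{\Sigma_1},P_{\Sigma_2}$ are at total-variation distance at most $\epsilon$, the two $\epsilon$-contamination neighborhoods share a common distribution and are therefore indistinguishable from any number of samples. I would take the rank-one, two-sparse two-point family $V_1=e_1$ and $V_2=\cos\theta\,e_1+\sin\theta\,e_2$, so that $\Sigma_j=\lambda V_jV_j^T+I_p\in\mathcal{F}_{s,\lambda}(M,r)$ for $s\geq 2$, $r\geq 1$, $\lambda\leq M$. The Frobenius separation is $\fnorm{V_1V_1^T-V_2V_2^T}^2=2\sin^2\theta$, and a direct computation in which the log-determinant contributions cancel gives
\begin{equation*}
D\bigl(N(0,\Sigma_1)\,\Vert\,N(0,\Sigma_2)\bigr)=\frac{\lambda^2}{2(1+\lambda)}\sin^2\theta,
\end{equation*}
so by Pinsker $\TV(P_{\Sigma_1},P_{\Sigma_2})\leq \frac{\lambda}{2\sqrt{1+\lambda}}\,|\sin\theta|$. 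Choosing $|\sin\theta|\asymp(\epsilon/\lambda)\wedge\tfrac{1}{2}$ keeps the TV below $\epsilon$ while producing separation of exactly the claimed order $(\epsilon^2/\lambda^2)\wedge c'$; Le Cam's two-point method then supplies the constant testing error.

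The main obstacle I anticipate is the small-$\lambda$, large-$\epsilon$ regime where $\epsilon/\lambda\gtrsim 1$ and the Pinsker bound is vacuous. There I would fix $|\sin\theta|$ at an absolute constant such as $1/4$, so that $\TV(P_{\Sigma_1},P_{\Sigma_2})\lesssim \lambda\leq \epsilon$ holds automatically and the contamination neighborhoods still overlap, letting the separation saturate at $c'$ and matching the $\wedge c'$ cap in the statement. A secondary subtlety is that the Fano packing for the first inequality must live inside $\mathcal{F}_{s,\lambda}(M,r)$ while respecting $\lambda\leq\lambda_r\leq\cdots\leq\lambda_1\leq M$ and the bounded-rank assumption; taking $\Lambda=\lambda I_r$ in the construction makes the rank exactly $r$ and all nonzero eigenvalues equal to $\lambda$, which resolves this cleanly.
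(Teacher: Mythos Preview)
Your proposal is correct and follows essentially the same route as the paper: split into the classical sparse-PCA rate $\mathcal{M}(0)$ (citing \cite{cai2013sparse}) and the contamination term $\omega(\epsilon,\Theta)$ via a rank-one two-point pair, then combine via Theorem~\ref{thm:lower}. The paper uses the cruder bound $\TV^2\leq \tfrac{\lambda^2}{8}\fnorm{\theta_1\theta_1^T-\theta_2\theta_2^T}^2$ in place of your exact KL, but the mechanism is identical.

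One small omission: your two-point construction $V_1=e_1$, $V_2=\cos\theta\,e_1+\sin\theta\,e_2$ only lands in $\mathcal{F}_{s,\lambda}(M,r)$ when $r=1$, since the class requires $V\in O(p,r)$. You fix this for the Fano packing but not for the contamination two-point; the paper handles $r\geq 2$ by appending a common block $\lambda WW^T$ with $W\in O(p,r-1)$ supported on $\{3,\dots,p\}$, so that $\Sigma_j=\lambda\theta_j\theta_j^T+\lambda WW^T+I_p$ --- the KL and separation calculations are unchanged.
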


Note that Theorem \ref{thm:pca} and Theorem \ref{thm:pca} imply that the minimax rate of sparse PCA under the $\epsilon$-contamination model is $\frac{s\log\frac{ep}{s}}{n\lambda^2}\vee\frac{\epsilon^2}{\lambda^2}$. When $\epsilon=0$, our minimax rate reduces to the case without contamination, which was previously obtained by \cite{cai2013sparse,cai13}. It is interesting that for this class, the term in the minimax rate that characterizes the influence of contamination is $\frac{\epsilon^2}{\lambda^2}$, compared with $\epsilon^2$ in all the previous theorems. We will explain this curious fact by a unified lower bound argument in Section \ref{sec:lower}.

To close this section, we briefly discuss the case where $M$ in various covariance matrix classes is not necessarily a constant. For unstructured covariance class $\mathcal{F}(M)$ in Theorem \ref{thm:matrix upper}, banded covariance class $\mathcal{F}_k(M)$ in Theorem \ref{thm:band}, sparse covariance class $\mathcal{F}_s(M)$ in Theorem \ref{thm:sparse} and spiked covariance class $\mathcal{F}_{s,\lambda}(M,r)$ in Theorem \ref{thm:pca}, all the upper and lower bounds can be readily extended so that the minimax rates with respect to $\opnorm{\hat{\Sigma}-\Sigma}$ or $\fnorm{\hat{V}\hat{V}-VV^T}$ will include an extra factor of $M$. For the bandable class $\mathcal{F}_{\alpha}(M,M_0,M_{\min})$ in Theorem \ref{thm:band}, we can assume all three values $M$, $M_0$, $M_{\min}$ are at the same order and scale together. For this case, all the upper and lower bounds can also be readily extended so that the minimax rates linearly depend on $M$.


\section{Extension to Elliptical Distributions}\label{sec:elliptical}

In Section \ref{sec:matrix}, we considered estimating the covariance matrix under the Gaussian
distribution $P_{\Sigma }=N(0,\Sigma)$. Though we show that our covariance estimator via matrix depth function is robust to arbitrary outliers, it is not clear whether such property also holds under more general distributions. In real applications, the data may not follow a Gaussian distribution and can have very heavy tails. It is even possible that the distribution may not have finite first or second moment. In this section, we extend the Gaussian setting in Section \ref{sec:matrix} to general elliptical distributions. We show that at the population level, the scatter matrix of an elliptical distribution achieves the maximum of the matrix depth function. This fact motivates us to use the matrix depth estimator (\ref{eq:hatgamma}) in the elliptical distribution setting. Indeed, all error bounds we prove under the Gaussian distribution continue to hold under the elliptical distributions. Therefore, the proposed estimator is also adaptive to the shape of the distribution. 
As is pointed out by a referee, the estimator induced by matrix depth is well-defined even if the underlying distribution is not elliptical. It can be interpreted as a multivariate analogue to the median absolute deviation and can serve as a robust scale estimator of the distribution.

We start by introducing the definition of an elliptical distribution.

\begin{definition}[\cite{fang1990symmetric}]
\label{def:elliptical}
A random vector $X\in\mathbb{R}^p$ follows an elliptical distribution if and only if it has the representation $X=\mu+\xi AU$, where $\mu\in\mathbb{R}^p$ and $A\in\mathbb{R}^{p\times r}$ are model parameters. The random variable $U$ is distributed uniformly on the unit sphere $S^{p-1}$ and $\xi\geq 0$ is a random variable in $\mathbb{R}$ independent of $U$. Letting $\Sigma=AA^T$ and we denote $X\sim EC(\mu,\Sigma,\xi)$.
\end{definition}

For simplicity, we consider the model with $\mu=0$.
We want to remark two points on this definition. First, the representation $EC(0,\Sigma,\xi)$ is not unique. This is because $EC(0,\Sigma,\xi)=EC(0,a^{-2}\Sigma,a\xi)$ for any $a>0$. Secondly, for an elliptical random variable $X\sim EC(0,\Sigma,\xi)$ with $s_{\min}(\Sigma) > 0$, given any unit vector $u\in S^{p-1}$, the distribution of $u^TX/\sqrt{u^T\Sigma u}$ is independent of $u$. In other words, $\Sigma^{-1/2}X$ is spherically symmetric. Motivated by these two points, we define the canonical representation of an elliptical distribution as follows.

\begin{definition}
For a non-degenerate elliptical distribution $EC(0,\Sigma,\xi)$ in the sense that $s_{\min}(\Sigma)>0$, $EC(0,\Gamma,\eta)$ is its canonical representation if and only if $\Gamma=a^{-2}\Sigma$ and $\eta=a\xi$ for some $a>0$, and $P_{\Gamma}\left(\frac{|u^TX|^2}{u^T\Gamma u}\leq 1\right)=\frac{1}{2}$, where $P_{\Gamma}=EC(0,\Gamma,\eta)$. From now on, whenever we use $P_{\Gamma}=EC(0,\Gamma,\eta)$, it always denotes the canonical representation.
\end{definition}

To guarantee the existence and  uniqueness of the canonical representation, we need the following assumption on the marginal distribution. Define the distribution function
\begin{equation}
G(t)=P_{\Gamma}\left(\frac{|u^TX|^2}{u^T\Gamma u}\leq t\right).\label{eq:G(t)}
\end{equation}
Note that $G(t)$ does not depend on the specific direction $u\in S^{p-1}$ used in the definition. We assume that $G(t)$ is continuous at $t=1$ and there exist some $\tau\in (0,1/2)$ and $\alpha,\kappa>0$ such that
\begin{equation}
\inf_{|t|\geq \alpha}|G(1)-G(1+t)|\geq \tau\quad\text{and}\quad\inf_{|t|<\alpha}\frac{|G(1)-G(1+t)|}{|t|}\geq\kappa^{-1/2}.\label{eq:assell}
\end{equation}
Intuitively speaking, we require $G(\cdot)$ to be strictly increasing in a neighborhood of $t=1$.
\begin{proposition}\label{prop:unique}
For an elliptical distribution $EC(0,\Gamma,\eta)$ that satisfies (\ref{eq:assell}), its canonical representation exists and is unique.
\end{proposition}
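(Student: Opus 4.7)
The plan is to reduce the canonical-representation problem to a one-dimensional median question via the elliptical invariance, and then apply the quantitative conditions in (\ref{eq:assell}) to conclude.

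First, invoking Definition~\ref{def:elliptical} I would write $X = \eta A U$ with $AA^T = \Gamma$ and $U$ uniformly distributed on $S^{p-1}$ and independent of $\eta$. For any $u \in S^{p-1}$,
\[
\frac{|u^T X|^2}{u^T \Gamma u} \;=\; \eta^2 \Bigl(\tfrac{A^T u}{\|A^T u\|} \cdot U\Bigr)^2 \;\stackrel{d}{=}\; \eta^2 U_1^2,
\]
since $v \cdot U \stackrel{d}{=} U_1$ for any unit vector $v$ by the rotational invariance of $U$. This confirms the claim in the text that $G$ in (\ref{eq:G(t)}) is independent of $u$ and equals the distribution function of $\eta^2 U_1^2$. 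Moreover, every alternative parameterization of the same $X$ has the form $(0,a^{-2}\Gamma,a\eta)$ for some $a > 0$ (obtained by rescaling $A \mapsto a^{-1}A$); its distribution function is $G_a(t) = G(t/a^2)$, so the canonical condition $G_a(1) = 1/2$ becomes $G(1/a^2) = 1/2$. The proposition therefore reduces to the claim that the equation $G(s) = 1/2$ has a unique positive solution $s = 1/a^2$.

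For existence, the continuity of $G$ at $t = 1$ postulated in (\ref{eq:assell}) combined with the gap inequalities $G(1-\alpha) \leq G(1) - \tau$ and $G(1+\alpha) \geq G(1) + \tau$ (implied by the first part of the assumption), together with the boundary behavior $G(0^+) = 0$, $G(\infty) = 1$, and monotonicity, place $1/2$ in the range of $G$ near $t = 1$. A standard intermediate-value argument at the continuity point $t = 1$ then produces $s^* \in (0,\infty)$ with $G(s^*) = 1/2$. For uniqueness, suppose $s_1 \neq s_2$ both satisfy $G(s_i) = 1/2$. Reparameterize by $a_1 = 1/\sqrt{s_1}$ to obtain the canonical distribution function $\tilde G(t) = G(t s_1)$, which satisfies $\tilde G(1) = \tilde G(s_2/s_1) = 1/2$. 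Apply (\ref{eq:assell}) to $\tilde G$: if $|s_2/s_1 - 1| \geq \alpha$ then $|\tilde G(s_2/s_1) - \tilde G(1)| \geq \tau > 0$; if $|s_2/s_1 - 1| < \alpha$ then $|\tilde G(s_2/s_1) - \tilde G(1)| \geq \kappa^{-1/2}|s_2/s_1 - 1| > 0$. Either case contradicts $\tilde G(s_2/s_1) = \tilde G(1)$, so $s^*$ and hence $a$ are unique.

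The main subtlety I anticipate is interpretive bookkeeping: (\ref{eq:assell}) is phrased via $P_\Gamma$, the canonical representation, so at each invocation one must verify that it is applied to a legitimately canonical distribution function. The reparameterization $\tilde G(t) = G(t s_1)$ in the uniqueness step is precisely what makes $\tilde G$ canonical with respect to its own $t = 1$, so the assumption transfers without further work; the existence step is similarly a local argument at the continuity point, and the non-local behavior of $G$ away from $t = 1$ plays no role.
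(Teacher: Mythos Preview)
Your approach is essentially the paper's: both reduce uniqueness to the claim that $G(t)=1/2$ has only the solution $t=1$, and both derive this directly from (\ref{eq:assell}). The paper's proof is a two-line version of yours.

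Two remarks are worth making. First, existence is effectively presupposed by the hypothesis: the condition (\ref{eq:assell}) is stated for the function $G$ built from the canonical $P_\Gamma$, so $G(1)=1/2$ already holds by the definition of ``canonical.'' Your intermediate-value argument is therefore unnecessary, and in fact would not obviously go through without already knowing $G(1)=1/2$, since continuity is only assumed at the single point $t=1$ (a jump elsewhere could skip over $1/2$). The paper's one-line ``existence is guaranteed by the continuity'' is equally terse on this point; neither proof is really doing work here.

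Second, in your uniqueness step the claim that (\ref{eq:assell}) ``transfers without further work'' to the reparameterized $\tilde G$ is not correct as stated: (\ref{eq:assell}) is an assumption about one specific $G$, not a property inherited automatically by any function satisfying $\tilde G(1)=1/2$. The fix is trivial, however---since $G(1)=1/2$ holds for the given $G$, you may simply take $s_1=1$, so $\tilde G=G$ and (\ref{eq:assell}) applies verbatim. This is exactly what the paper does: if there were a second canonical representation then $G(t)=G(1)$ for some $t\neq 1$, which both parts of (\ref{eq:assell}) forbid. No reparameterization is needed.
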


The existence and uniqueness of the canonical representation of $EC(0,\Gamma,\eta)$ imply that the matrix $\Gamma$ is a well-defined object. We call $\Gamma$ the scatter matrix. The following proposition shows that the scatter matrix $\Gamma$ is actually the deepest one with respect to the matrix depth function.
\begin{proposition}\label{prop:deepell}
For any subset $\mathcal{U}\subset S^{p-1}$, we have $\mathcal{D}_{\mathcal{U}}(\Gamma,P_{\Gamma})=\frac{1}{2}$.
\end{proposition}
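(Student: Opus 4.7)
\bigskip

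\noindent\textbf{Proof plan for Proposition~\ref{prop:deepell}.} The plan is to show that, for every single direction $u\in S^{p-1}$, the two quantities inside the inner minimum are both exactly $1/2$. Once that is established, the conclusion is immediate because the outer minimum of the constant function $1/2$ over any subset $\mathcal{U}\subset S^{p-1}$ is $1/2$.

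First I would recall the key structural fact about elliptical distributions used right after Definition~\ref{def:elliptical}: if $X\sim EC(0,\Gamma,\eta)$ with $s_{\min}(\Gamma)>0$, then $\Gamma^{-1/2}X$ is spherically symmetric, so the law of the scalar $u^TX/\sqrt{u^T\Gamma u}$ does not depend on $u\in S^{p-1}$. In particular the distribution function
\[
G(t)=P_{\Gamma}\!\left(\frac{|u^TX|^2}{u^T\Gamma u}\leq t\right)
\]
introduced in~\eqref{eq:G(t)} is well-defined (independent of $u$) and $u^T\Gamma u>0$ by non-degeneracy, so dividing is legitimate.

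Next, by the very definition of the canonical representation, $G(1)=1/2$. I would then invoke the assumption that $G$ is continuous at $t=1$, which forces
\[
P_{\Gamma}\!\left(\frac{|u^TX|^2}{u^T\Gamma u}=1\right)=G(1)-G(1^-)=0.
\]
Combining this with $G(1)=1/2$ gives
\[
P_{\Gamma}\{|u^TX|^2\leq u^T\Gamma u\}=G(1)=\tfrac12,\qquad
P_{\Gamma}\{|u^TX|^2\geq u^T\Gamma u\}=1-G(1^-)=\tfrac12,
\]
for every $u\in S^{p-1}$. Hence the inner minimum in the definition of $\mathcal{D}_{\mathcal{U}}(\Gamma,P_{\Gamma})$ equals $1/2$ for each $u$, and minimizing the constant $1/2$ over $\mathcal{U}$ yields $\mathcal{D}_{\mathcal{U}}(\Gamma,P_{\Gamma})=1/2$.

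The argument is essentially a bookkeeping check; there is no real obstacle once the canonical representation is in hand. The only subtle point to be careful about is the continuity at $t=1$: without it, the event $\{|u^TX|^2=u^T\Gamma u\}$ could carry positive mass and the two probabilities would both exceed $1/2$, which would still give a minimum of $1/2$ after taking the minimum of two numbers summing to $1+P(\{\cdot\}=1)$, but writing the clean equality $=1/2$ requires continuity. This is exactly why continuity of $G$ at $1$ was built into assumption~\eqref{eq:assell}, so the step is painless.
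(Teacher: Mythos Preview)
Your proof is correct and follows essentially the same route as the paper: for each $u\in S^{p-1}$, rewrite the per-direction depth as $G(1)\wedge(1-G(1^-))$ and use $G(1)=1/2$ from the canonical representation, then minimize the constant $1/2$ over $\mathcal{U}$. The paper's argument is marginally more economical in that it does not invoke continuity of $G$ at $t=1$: monotonicity alone gives $G(1^-)\le G(1)=1/2$, hence $1-G(1^-)\ge 1/2$, and the minimum is $1/2$ regardless of whether the boundary event has mass---a point you in fact already observe in your closing paragraph.
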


When $X\sim EC(0,\Gamma,\eta)$ has a density function, it must have the form $p(x)=f(x^T\Gamma^{-1}x)$ for some univariate function $f(\cdot)$ \citep{fang1990symmetric}.
Examples of elliptical distributions include:
\begin{enumerate}
\item \textit{Multivariate Gaussian.} Density function $p(x)\propto \exp(-\beta x^T\Gamma^{-1}x/2)$, where the constant $\beta$ is defined in (\ref{eq:beta}). Proposition \ref{prop:truth} implies that $\beta^{-1}\Gamma$ is the Gaussian covariance matrix.
\item \textit{Multivariate Laplace.} Density function $p(x)\propto \exp(-\sqrt{\beta x^T\Gamma^{-1}x})$, where the constant $\beta$ is determined through the canonical representation. The covariance matrix has formula $(p+1)\beta^{-1}\Gamma$.
\item \textit{Multivariate $t$.} Density function $p(x)\propto \left(1+\beta x^T\Gamma^{-1}x/d\right)^{-\frac{d+p}{2}}$, where $d$ is the degree of freedom. The constant $\beta$ is determined through the canonical representation. When $d>2$, the covariance matrix is $\frac{d}{d-2}\beta^{-1}\Gamma$. Otherwise, the covariance does not exist.
\item \textit{Multivariate Cauchy.} This is a special case of multivariate $t$ distribution when $d=1$. The density function is $p(x)\propto (1+\beta x^T\Gamma^{-1}x)^{-\frac{p+1}{2}}$.
\end{enumerate}

\begin{proposition}\label{prop:example}
For all the four examples above, $\beta$ is an absolute constant independent of $p$. Moreover, the condition (\ref{eq:assell}) holds with absolute constants $\tau,\alpha,\kappa$ independent of $p$.
\end{proposition}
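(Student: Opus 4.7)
The plan is to verify the claim example by example by computing $G(t)=\Prob_\Gamma\bigl(|u^TX|^2/(u^T\Gamma u)\le t\bigr)$ of (\ref{eq:G(t)}) explicitly, then reading off $\beta$ from the canonical condition $G(1)=\tfrac12$ and checking (\ref{eq:assell}) by bounding the density of $G$ near $t=1$ and the separation of $G(1+t)$ from $\tfrac12$ for $|t|\ge\alpha$. The common first step is a dimension-independent reduction: by spherical symmetry of the elliptical family, setting $Z=\Gamma^{-1/2}X$ one has $|u^TX|^2/(u^T\Gamma u)\stackrel{d}{=}Z_1^2$ for every $u\in S^{p-1}$, so $G(t)=\Prob(Z_1^2\le t)$ and the canonical condition becomes simply that the median of $Z_1^2$ equal~$1$.

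Three of the four examples are immediate because the marginal of $Z_1$ is of the same family as the joint distribution. For the Gaussian, $Z\sim N(0,\beta^{-1}I_p)$ gives $Z_1\sim N(0,\beta^{-1})$, so the median condition reduces to $\Phi(\sqrt{\beta})=\tfrac34$, matching (\ref{eq:beta}) and independent of $p$. For the multivariate $t$ (with Cauchy as the $d=1$ subcase) the classical marginalisation identity for the $p$-dimensional $t$ density yields $\sqrt{\beta}\,Z_1\sim t_d$, so $\beta$ equals the median of $t_d^2$ and depends only on $d$; the Cauchy case solves $(2/\pi)\arctan(\sqrt{\beta})=\tfrac12$ to give $\beta=1$. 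In each of these three examples $G$ is a fixed function of $t$ independent of $p$, so a direct inspection of the densities of $\chi_1^2$, $F_{1,d}$ and $F_{1,1}$ on a neighbourhood of their median shows that this density is bounded away from~$0$ there and that $G$ separates from~$\tfrac12$ outside any fixed neighbourhood, giving (\ref{eq:assell}) with absolute constants $\tau,\alpha,\kappa$.

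The Laplacian case is the main obstacle because the marginal of $Z_1$ is no longer of the same family as the joint. The plan is to use the polar decomposition $Z=\rho U$ with $\rho=|Z|\sim\mathrm{Gamma}(p,\sqrt{\beta})$ and $U$ independent and uniform on $S^{p-1}$, so that $Z_1=\rho U_1$, and then convert the resulting one-dimensional integral (via the substitutions $s=\sqrt{z_1^2+r^2}$ followed by $s=|z_1|\cosh\theta$) to the Bessel-type form
\begin{equation*}
f_{Z_1}(z_1)\propto|z_1|^{p-1}\int_0^\infty\sinh^{p-2}\theta\,\cosh\theta\,\exp\!\bigl(-\sqrt{\beta}\,|z_1|\cosh\theta\bigr)\,d\theta,
\end{equation*}
which further simplifies (via an integration by parts and the standard integral representation of $K_\nu$) to $f_{Z_1}(z_1)\propto|z_1|^{p/2}K_{p/2}(\sqrt{\beta}|z_1|)$. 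From this closed form one extracts $\beta$ and the local and tail properties of $G$ needed for (\ref{eq:assell}) using uniform asymptotics of $K_\nu$ as $\nu\to\infty$ (which make the concentration of $\rho$ around $p/\sqrt{\beta}$ and of $\sqrt{p}\,U_1$ around $N(0,1)$ explicit), combined with a direct check for small $p$ to secure uniformity in $p$.
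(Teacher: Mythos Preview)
Your approach differs from the paper's: the paper does not compute marginals directly but argues via the characteristic generator of the elliptical family, observing that $\mathbb{E}e^{it^TX}=\phi(t^T\Gamma t)$ with $\phi$ independent of $p$ in all four examples (e.g.\ $\phi(v)=1/(1+\beta v/2)$ for the Laplacian), and since the law of $u^TX/\sqrt{u^T\Gamma u}$ is determined by $\phi$ alone, both $G$ and the constants in (\ref{eq:assell}) are automatically $p$-free. Your explicit marginal computation is a legitimate alternative and works cleanly for the Gaussian, $t$ and Cauchy cases, where the one-dimensional marginal stays in the same family.

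The Laplacian case in your proposal has a genuine gap. Your formula $f_{Z_1}(z_1)\propto|z_1|^{p/2}K_{p/2}(\sqrt{\beta}|z_1|)$ is correct for the density $\propto\exp(-\sqrt{\beta\,x^T\Gamma^{-1}x})$, but it shows that $G$ \emph{does} depend on $p$; hence the solution $\beta$ of $G(1)=\tfrac12$ varies with $p$, and uniform asymptotics of $K_\nu$ as $\nu\to\infty$ can at best show that $\beta$ converges, not that it is an absolute constant. The underlying issue is definitional: the density $\exp(-\sqrt{\beta\,x^T\Gamma^{-1}x})$ and the characteristic generator $1/(1+\beta v/2)$ invoked in the paper's proof coincide only when $p=1$; the latter (a Gaussian scale mixture with exponential mixing) has $p$-independent one-dimensional marginals, the former does not. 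If you adopt the characteristic-function specification of the multivariate Laplacian, your argument collapses to the same closure-under-marginalisation you used for the $t$ case; if you insist on the density specification, the proposition's conclusion fails for this example and no asymptotic argument on $K_{p/2}$ can repair it.
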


Let us proceed to  consider estimating the scatter matrix $\Gamma$ under the $\epsilon$-contamination model $\mathbb{P}_{(\epsilon,\Gamma,Q)}=(1-\epsilon)P_{\Gamma}+\epsilon Q$. This requires the estimator to be robust in two senses. First, it should be resistant to the outliers. Second, it should be adaptive to the distribution. Using the property of the scatter matrix spelled out in Proposition \ref{prop:deepell}, we show that the depth-induced estimator (\ref{eq:hatgamma}) enjoys optimal rates of convergence in various settings.

\begin{thm}\label{thm:e1}
Consider the estimator $\hat{\Gamma}$ defined in (\ref{eq:estimator of Gamma}). Assume $\epsilon <\tau/3$  and the distribution $P_{\Gamma}=EC(0,\Gamma,\eta)$ satisfies (\ref{eq:assell}). Then, there exist absolute constants $C,C_1>0$, such that for any $\delta\in(0,1/2)$ satisfying $C_1\frac{p+\log(1/\delta)}{n}<1$, we have
$$
\opnorm{\hat{\Gamma}-\Gamma}^2\leq C\kappa\left(\left(\frac{p}{n}\vee\epsilon^2\right)+\frac{\log(1/\delta)}{n}\right),
$$
with $\mathbb{P}_{(\epsilon,\Gamma,Q)}$-probability at least $1-2\delta$
uniformly over all $Q$ and $\Gamma \in \mathcal{F}(M)$.
\end{thm}

\begin{thm}\label{thm:e2}
Consider the estimator $\hat{\Gamma}$ defined in (\ref{eq:bandgamma}).
Assume $\epsilon <\tau/3$  and the distribution $P_{\Gamma}=EC(0,\Gamma,\eta)$ satisfies (\ref{eq:assell}). Then, there exist absolute constants $C,C_1>0$, such that for any $\delta\in(0,1/2)$ satisfying $C_1\frac{k+\log p+\log(1/\delta)}{n}<1$, we have
\begin{equation*}
\opnorm{\hat{\Gamma}-\Gamma}^2 \leq C\kappa\left(
\left(\frac{k+\log p}{n}\vee\epsilon^2\right)+\frac{\log(1/\delta)}{n}\right) ,
\end{equation*}
with $\mathbb{P}_{(\epsilon ,\Gamma ,Q)}$-probability at least $1-2\delta$ uniformly over all $Q$
and $\Gamma \in \mathcal{F}_k(M)$.
\end{thm}

\begin{thm}\label{thm:e3}
Consider the estimator $\hat{\Gamma}$ defined in (\ref{eq:bandgamma}) with $k=\ceil{n^{\frac{1}{2\alpha+1}}}\wedge p$.
Assume $\epsilon <\tau/3$  and the distribution $P_{\Gamma}=EC(0,\Gamma,\eta)$ satisfies (\ref{eq:assell}). Then, there exist absolute constants $C,C_1>0$, such that for any $\delta\in(0,1/2)$ satisfying $C_1\frac{\min(n^{2\alpha+1}+\log p,p)+\log(1/\delta)}{n}<1$, we have
\begin{equation*}
\opnorm{\hat{\Gamma}-\Gamma}^2 \leq C\kappa\left(
\left(\min\left\{n^{-\frac{2\alpha}{2\alpha+1}}+\frac{\log p}{n},\frac{p}{n}\right\}\vee\epsilon^2\right)+\frac{\log(1/\delta)}{n}\right) ,
\end{equation*}
with $\mathbb{P}_{(\epsilon ,\Gamma ,Q)}$-probability at least $1-2\delta$ uniformly over all $\Gamma \in \mathcal{F}_{\alpha}(M,M_0,M_{\min})$ and $Q$.
\end{thm}

\begin{thm}\label{thm:e4}
Consider the estimator $\hat{\Gamma}$ defined in (\ref{eq:sparsegamma}).
Assume $\epsilon <\tau/3$  and the distribution $P_{\Gamma}=EC(0,\Gamma,\eta)$ satisfies (\ref{eq:assell}). Then, there exist absolute constants $C,C_1>0$, such that for any $\delta\in(0,1/2)$ satisfying $C_1\frac{s\log\frac{ep}{s}+\log(1/\delta)}{n}<1$, we have
\begin{equation*}
\opnorm{\hat{\Gamma}-\Gamma}^2 \leq C\kappa\left(
\left(\frac{s\log\frac{ep}{s}}{n}\vee\epsilon^2\right)+\frac{\log(1/\delta)}{n}\right) ,
\end{equation*}
with $\mathbb{P}_{(\epsilon ,\Gamma ,Q)}$-probability at least $1-2\delta$ uniformly over all $Q$
and $\Gamma \in \mathcal{F}_s(M)$.
\end{thm}

\begin{thm}\label{thm:eca}
Consider $\hat{\Gamma}$ defined in (\ref{eq:sparsegamma}), and
define $\hat{V}\in O(p,r)$ to be the matrix whose $l$th column is the $l$th eigenvector of $\hat{\Gamma}$.
Assume the distribution $P_{\Gamma}=EC(0,\Gamma,\eta)$ satisfies (\ref{eq:assell}). Then, there exist absolute constants $C,C_1,C_2>0$, such that for any $\delta\in(0,1/2)$ satisfying $C_1\kappa\left(\left(\frac{s\log\frac{ep}{s}}{n\lambda^2}\vee\frac{\epsilon^2}{\lambda^2}\right)\right.$ $\left.+\frac{\log(1/\delta)}{n\lambda^2}\right)\leq 1$ and $r\leq C_2$,  we have
\begin{equation*}
\fnorm{\hat{V}\hat{V}-VV^T}^2 \leq {C\kappa}\left(
\left(\frac{s\log\frac{ep}{s}}{n\lambda^2}\vee\frac{\epsilon^2}{\lambda^2}\right)+\frac{\log(1/\delta)}{n\lambda^2}\right) ,
\end{equation*}
with $\mathbb{P}_{(\epsilon ,\Gamma ,Q)}$-probability at least $1-2\delta$ uniformly over all $Q$
and $\Gamma \in \mathcal{F}_{s,\lambda}(M,r)$.
\end{thm}

\begin{remark}
Theorem \ref{thm:eca} requires the scatter matrix $\Gamma$ to belong to $\mathcal{F}_{s,\lambda}(M,r)$, which means that $\Gamma=V\Lambda V^T+I_p$. While the $I_p$ part has a clear meaning for covariance matrix, it may not be a suitable way of modeling the scatter matrix. However, we may consider a more general space which contains $\Gamma=V\Lambda V^T+\sigma^2 I_p$ for some absolute constant $\sigma^2$  bounded in some interval $[M^{-1},M]$. Then, the result of Theorem \ref{thm:eca} still holds.
\end{remark}

\begin{remark}
The problem of finding the leading principal subspace for $EC(0,\Gamma,\eta)$ was coined as elliptical component analysis by \cite{han2013eca}. While \cite{han2013eca} extended sparse principal component analysis to the elliptical distributions, the influence of outliers was not investigated.
In comparison, we show that our estimator is robust to both heavy-tailed distributions and the presence of outliers.
\end{remark}

\begin{remark}
Theorems \ref{thm:e1}-\ref{thm:eca} identify a linear dependence on the number $\kappa$ in the error bounds. This dependence was previously revealed in the literature when $\epsilon=0$ and $p=1$. In this case, our proposed estimator is the median absolute deviation that enjoys asymptotic normality $\sqrt{n}(\hat{\gamma}-\gamma)\leadsto N\left(0,\frac{1}{4|G'(1)|^2}\right)$ (see Example 5.24 in \cite{van2000asymptotic}). Given the fact that $|G(1)-G(1+t)|/|t|\approx |G'(1)|$ when $t$ is small, $\kappa$ plays a similar role as $|G'(1)|^{-2}$.
\end{remark}

To close this section, we remark that the estimators via matrix depth function does not require the knowledge of the exact elliptical distribution. They are adaptive to all $EC(0,\Gamma,\eta)$ that satisfy the condition (\ref{eq:assell}). Since the class of elliptical distributions includes multivariate Gaussian as a special case, the lower bounds in Section \ref{sec:matrix} imply that the convergence rates obtained in this section are optimal.


\section{A General Minimax Lower Bound}\label{sec:lower}

In this section, we provide a general minimax theory for $\epsilon$-contamination model. Given a general statistical experiment $\{P_{\theta}:\theta\in\Theta\}$, recall the notation $\mathbb{P}_{(\epsilon,\theta,Q)}=(1-\epsilon)P_{\theta}+\epsilon Q$. If we denote the minimax rate for the class $\{\mathbb{P}_{(\epsilon,\theta,Q)}:\theta\in\Theta,Q\}$ under some loss function $L(\theta_1,\theta_2)$ by $\mathcal{M}(\epsilon)$, then most rates we obtained in Section \ref{sec:location} and Section \ref{sec:matrix} can be written as $\mathcal{M}(\epsilon)\asymp \mathcal{M}(0)\vee\epsilon^2$. The only exception is $\mathcal{M}(\epsilon)\asymp \mathcal{M}(0)\vee(\epsilon^2/\lambda^2)$ for sparse principal component analysis. Therefore, a natural question is whether we can have a general theory for the $\epsilon$-contamination model that governs those minimax rates. The answer for this question lies in a key quantity called modulus of continuity, whose definition goes back to the seminal works of Dohono and Liu \citep{donoho1991geometrizing} and Donoho \citep{donoho1994statistical}.

The modulus of continuity for the $\epsilon$-contamination model is defined as
\begin{equation}
\omega(\epsilon,\Theta)=\sup\left\{L(\theta_1,\theta_2): \TV(P_{\theta_1},P_{\theta_2})\leq\epsilon/(1-\epsilon);\theta_1,\theta_2\in\Theta\right\}.\label{eq:mod}
\end{equation}
The quantity $\omega(\epsilon,\Theta)$ measures the ability of the loss $L(\theta_1,\theta_2)$ to distinguish two distributions $P_{\theta_1}$ and $P_{\theta_2}$ that are close in total variation at the order of $\epsilon$. A high level interpretation is that two distributions $P_{\theta_1}$ and $P_{\theta_2}$ as close as $\epsilon/(1-\epsilon)$ under total variation distance cannot be distinguished at the presence of arbitrary contamination distribution with proportion $\epsilon$. Thus, an error at the order of $\omega(\epsilon,\Theta)$ cannot be avoided for the loss $L(\cdot,\cdot)$.
A general minimax lower bound depending on the modulus of continuity is stated in the following theorem.

\begin{thm}\label{thm:lower}
Suppose there is some $\mathcal{M}(0)$ such that for $\epsilon=0$
\begin{equation}
\inf_{\hat{\theta}}\sup_{\theta\in\Theta}\sup_Q\mathbb{P}_{(\epsilon,\theta,Q)}\left\{L(\hat{\theta},\theta)\geq\mathcal{M}(\epsilon)\right\}\geq c\label{eq:lower}
\end{equation}
holds. Then for any $\epsilon\in[0,1]$, (\ref{eq:lower}) holds for $\mathcal{M}(\epsilon)\asymp \mathcal{M}(0)\vee\omega(\epsilon,\Theta)$.
\end{thm}

Theorem \ref{thm:lower} shows that the quantity $\omega(\epsilon,\Theta)$ is the price of robustness one has to pay in the minimax rate.
To illustrate this result, let us consider the location model in Section \ref{sec:location} where $P_{\theta}=N(\theta,I_p)$. Since $\norm{\theta_1-\theta_2}^2=2D(P_{\theta_1}||P_{\theta_2})\geq 4\TV(P_{\theta_1},P_{\theta_2})^2$, we have $\omega(\epsilon,\Theta)\gtrsim \epsilon^2$. Besides, it is well known  that $\mathcal{M}(0)\asymp p/n$ for the location model, and thus we obtain the rate $(p/n)\vee\epsilon^2$ as the lower bound, which implies Theorem \ref{thm:location lower}. Similar calculation can also be done for the covariance model. In particular, for sparse principal component analysis, we get $\omega(\epsilon,\Theta)\asymp (\epsilon/\lambda)^2$. The details of derivation are given in the supplementary material \cite{supplement}.


\section{Discussion}\label{sec:disc}

\subsection{Impact of Contamination on Convergence Rates}

For all the problems we consider in this paper, the minimax rate under the $\epsilon$-contamination model has the expression $\mathcal{M}(\epsilon)\asymp\mathcal{M}(0)\vee\omega(\epsilon,\Theta)$. Define
$$\epsilon^*=\sup\left\{\epsilon: \omega(\epsilon,\Theta)\leq \mathcal{M}(0)\right\}.$$
Then, $\epsilon^*$ is the maximal proportion of outliers under which the minimax rate obtained without outliers can still be preserved. Thus,  $n\epsilon^*$ is the maximal expected number of outliers for an optimal procedure to achieve the minimax rate as if there is no contamination.

Compared to the minimax rate, consistency is easier to achieve. Suppose $\mathcal{M}(0)=o(1)$, then the necessary and  sufficient condition for consistency is $\omega(\epsilon,\Theta)=o(1)$. In most cases where $\omega(\epsilon,\Theta)\asymp \epsilon^2$, the condition reduces to $\epsilon=o(1)$, meaning that as long as the expected number of outliers is at a smaller order of $n$, the optimal procedure is consistent under the $\epsilon$-contamination model.

\subsection{Non-centered Observations} \label{sec:disc_center}

In previous sections, we assume that the observations are sampled from a centered distribution. This is essential for the proposed matrix depth method to work. It is important to extend our method to non-centered data in order to make it more practical. 

For the Gaussian case, our inspiration is from the simple fact that $\frac{1}{\sqrt{2}}(X_1-X_2)\sim N(0,\Sigma)$, where $X_1,X_2\sim N(\theta,\Sigma)$ are independent observations with with $\theta\in\mathbb{R}^p$ being an arbitrary mean vector. This motivates the following definition of a U-version empirical matrix depth function. That is
\begin{eqnarray*}
	\bar{\D}_{\U}(\Gamma,\{X_i\}_{i=1}^n) &=& \min_{u\in\U}\min\left\{\frac{1}{{n\choose 2}}\sum_{i<j}\mathbb{I}\{|u^T(X_i-X_j)|^2\leq 2u^T\Gamma u\},\right. \\
	&& \left.\frac{1}{{n\choose 2}}\sum_{i<j}\mathbb{I}\{|u^T(X_i-X_j)|^2\geq 2u^T\Gamma u\}\right\}.
\end{eqnarray*}
Then, a covariance matrix estimator $\hat{\Sigma}$ is defined through (\ref{eq:hatgamma}) and (\ref{eq:estimator of Cov}) with ${\D}_{\U}(\Gamma,\{X_i\}_{i=1}^n)$ replaced by $\bar{\D}_{\U}(\Gamma,\{X_i\}_{i=1}^n)$. A similar pairwise difference trick was used by \cite{dumbgen1998tyler} in a different setting. It turns out that all the non-asymptotic bounds in Section \ref{sec:matrix} continue to hold for this new estimator. Due to limited space, we provide more details in Section A of the supplementary material \cite{supplement}, including the extension to the non-centered elliptical distributions, based on an extension of the concentration inequality for suprema of some empirical process to its corresponding U-process.

\subsection{Connection with $\delta$-Breakdown Point}\label{sec:delta-bp}

The notion of breakdown point \citep{hampel1971general} has been widely used to quantify the influence of outliers for a given estimator. Its relation to the $\epsilon$-contamination model was previously explored through the notion of maximum bias in the context of robust covariance matrix estimation. See, e.g., \cite{zuo2005depth}. In this section, we discuss the connection between a population variation of the breakdown point and Huber's $\epsilon$-contamination model.
Let us start by the definition given in \cite{donoho1982breakdown,donoho1983notion,donoho1992breakdown}. Consider the observations $\{X_i\}_{i=1}^n$ that consist of two parts $\{Y_i\}_{i=1}^{n_1}$ and $\{Z_i\}_{i=1}^{n_2}$ with $n_1+n_2=n$. We view $\{Z_i\}_{i=1}^{n_2}$ as the outliers. Then, a robust estimator $\hat{\theta}(\cdot)$ should not be influenced much by the outliers if the proportion $n_2/(n_1+n_2)$ is small. The breakdown point of $\hat{\theta}$ with respect to $\mathcal{Y}$ is defined as
\begin{equation}
\epsilon(\hat{\theta},\mathcal{Y})=\min\left\{\frac{n_2}{n_1+n_2}: \sup_{\{Y_i\}_{i=1}^{n_1}\in\mathcal{Y}}\sup_{\{Z_i\}_{i=1}^{n_2}}\left\|\hat{\theta}(\{Y_i\}_{i=1}^{n_1})-\hat{\theta}(\{X_i\}_{i=1}^{n})\right\|=\infty\right\},\label{eq:dohonodep}
\end{equation}
where $\norm{\cdot}$ is some norm. In its original form, the {supremum} of $\{Y_i\}_{i=1}^{n_1}$ over $\mathcal{Y}$ does not appear in the definition. However, $\{Y_i\}_{i=1}^{n_1}$ are usually assumed to be in a general position or follow some distribution. Thus, it is natural to apply this modification.
Now let us consider the $\epsilon$-contamination model $\mathbb{P}_{(\epsilon,\theta,Q)}=(1-\epsilon){P}_{\theta}+\epsilon Q$. For i.i.d. observations $X_1,...,X_n\sim \mathbb{P}_{(\epsilon,\theta,Q)}$, it can be decomposed into two parts $\{Y_i\}_{i=1}^{n_1}$ and $\{Z_i\}_{i=1}^{n_2}$, where $n_2\sim \text{Binomial}(n,\epsilon)$ and $n_1=n-n_2$. Conditioning on $n_1$, $Y_1,...,Y_{n_1}\sim P_{\theta}$ and $Z_1,...,Z_{n_2}\sim Q$. Observe that $\frac{n_2}{n_1+n_2}\approx\epsilon$, which means the $\epsilon$ in the contamination model plays a similar role to the ratio $\frac{n_2}{n_1+n_2}$ in (\ref{eq:dohonodep}). Motivated by this fact, we introduce a population variation of (\ref{eq:dohonodep}). Given an estimator $\hat{\theta}$, its $\delta$-breakdown point with respect to some parameter space $\Theta$ is defined as
\begin{equation}
\epsilon(\hat{\theta},\Theta,\delta)=\min\left\{\epsilon: \sup_{\theta\in\Theta}\sup_Q\mathbb{P}_{(\epsilon,\theta,Q)}\left\{L\left(\hat{\theta}(\{Y_i\}_{i=1}^{n_1}),\hat{\theta}(\{X_i\}_{i=1}^{n})\right)>\delta\right\}>c\right\},\label{eq:CGRdep}
\end{equation}
where $L(\cdot,\cdot)$ is some loss function, and $c\in (0,1)$ is some small constant. We may view (\ref{eq:CGRdep}) as the population variation of (\ref{eq:dohonodep}) because $\sup_{Q}$ corresponds to $\sup_{\{Z_i\}_{i=1}^{n_2}}$, $\sup_{\theta\in\Theta}$ corresponds to $\sup_{\{Y_i\}_{i=1}^{n_1}\in\mathcal{Y}}$ and $\epsilon$ corresponds to $\frac{n_2}{n_1+n_2}$. We allow $\delta$ to be a sequence of $n$ instead of $\infty$ because $L(\cdot,\cdot)$ can be a bounded loss such as the one considered in the PCA problem in this paper. When $\delta=\infty$ for an unbounded loss and the bias term dominates the loss, the $\delta$-breakdown point becomes the lower bound of the contamination level $\epsilon$ for which the $\epsilon$-maxbias is infinite. See, e.g., \cite{zuo2005depth}. In general, the $\delta$-breakdown point means the minimal $\epsilon$ such that an estimator $\hat{\theta}$ is influenced at least by the level of $\delta$ under the $\epsilon$-contamination model.
In fact, $\epsilon(\hat{\theta},\Theta,\delta)$ is a quantity directly related to the lower bound of the convergence rate of $\hat{\theta}$ under the $\epsilon$-contamination model. This is rigorously stated in the following theorem.
\begin{thm}\label{thm:niubi}
Assume the loss function is symmetric and satisfies
\begin{eqnarray}
\label{eq:triangle}&& L(\theta_1,\theta_2)\leq A\left(L(\theta_1,\theta_3)+L(\theta_2,\theta_3)\right)\text{ }\forall\theta_1,\theta_2,\theta_3\in\Theta\text{ with some }A>0, \\
\label{eq:notcrazy}&& \sup_{\theta\in\Theta}P_{\theta}^{n}\left\{L(\hat{\theta},\theta)>\frac{1}{2}c_1A^{-1}\delta\right\}\geq\sup_{\theta\in\Theta}P_{\theta}^{n'}\left\{L(\hat{\theta},\theta)>\frac{1}{2}A^{-1}\delta\right\}\text{ }\forall n'\geq \frac{n}{3},
\end{eqnarray}
with some constant $c_1\in (0,1)$.
Then, for $\epsilon=\epsilon(\hat{\theta},\Theta,\delta)<\frac{1}{2}$, we have
$$\sup_{\theta\in\Theta}\sup_Q\mathbb{P}_{(\epsilon,\theta,Q)}\left\{L(\hat{\theta},\theta)>\frac{1}{2}c_1A^{-1}\delta\right\}>\frac{1}{3}c,$$
for some $c>0$ in (\ref{eq:CGRdep}) and sufficiently large $n$.
\end{thm}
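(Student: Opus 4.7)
The plan is to leverage the $\delta$-breakdown hypothesis together with the quasi-triangle inequality to force a large estimation error for $\hat{\theta}(\{X_i\}_{i=1}^n)$, and then use condition (\ref{eq:notcrazy}) to argue that the error incurred by the clean-data version $\hat{\theta}(\{Y_i\}_{i=1}^{n_1})$ cannot absorb too much of that mass. First, by definition of the $\delta$-breakdown point and the assumption $\epsilon=\epsilon(\hat{\theta},\Theta,\delta)<1/2$, there exist $\theta^*\in\Theta$ and a contaminating distribution $Q^*$ such that
\begin{equation*}
\mathbb{P}_{(\epsilon,\theta^*,Q^*)}\!\left\{L\!\left(\hat{\theta}(\{Y_i\}_{i=1}^{n_1}),\,\hat{\theta}(\{X_i\}_{i=1}^{n})\right)>\delta\right\}>c.
\end{equation*}
By symmetry of $L$ and (\ref{eq:triangle}), on this breakdown event we have $L(\hat{\theta}(\{Y_i\}),\theta^*)+L(\hat{\theta}(\{X_i\}),\theta^*)>\delta/A$, so at least one of these two losses exceeds $\delta/(2A)$. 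A union bound yields
\begin{equation*}
c<\mathbb{P}_{(\epsilon,\theta^*,Q^*)}\!\left\{L(\hat{\theta}(\{Y_i\}),\theta^*)>\tfrac{\delta}{2A}\right\}+\mathbb{P}_{(\epsilon,\theta^*,Q^*)}\!\left\{L(\hat{\theta}(\{X_i\}),\theta^*)>\tfrac{\delta}{2A}\right\}. \tag{$\star$}
\end{equation*}

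Next, I would control the first term on the right of $(\star)$. Conditional on $n_1$, the clean sample $\{Y_i\}_{i=1}^{n_1}$ is i.i.d.\ from $P_{\theta^*}$, so this probability equals $\mathbb{E}\bigl[P_{\theta^*}^{n_1}\{L(\hat{\theta},\theta^*)>\delta/(2A)\}\bigr]$. Since $n_1\sim\text{Binomial}(n,1-\epsilon)$ with $\mathbb{E}[n_1]/n=1-\epsilon>1/2$, Hoeffding gives $\mathbb{P}(n_1<n/3)\le e^{-n/18}$, which is smaller than $c/3$ for $n$ large enough. On the complementary event $\{n_1\ge n/3\}$, condition (\ref{eq:notcrazy}) applied with $n'=n_1$ yields
\begin{equation*}
P_{\theta^*}^{n_1}\!\left\{L(\hat{\theta},\theta^*)>\tfrac{\delta}{2A}\right\}\le \sup_{\theta\in\Theta}P_{\theta}^{n}\!\left\{L(\hat{\theta},\theta)>\tfrac{c_1\delta}{2A}\right\}.
\end{equation*}
Crucially, choosing $Q=P_\theta$ in the $\epsilon$-contamination model reproduces $P_\theta$ itself, so $\sup_\theta P_\theta^{n}\{\cdot\}\le \Pi$, where $\Pi:=\sup_{\theta\in\Theta}\sup_{Q}\mathbb{P}_{(\epsilon,\theta,Q)}\{L(\hat{\theta},\theta)>c_1 A^{-1}\delta/2\}$ is exactly the quantity we wish to bound from below. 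Combining these bounds gives $\mathbb{P}_{(\epsilon,\theta^*,Q^*)}\{L(\hat{\theta}(\{Y_i\}),\theta^*)>\delta/(2A)\}\le \Pi+c/3$.

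For the second term of $(\star)$, since $c_1<1$ the event $\{L(\hat{\theta}(\{X_i\}),\theta^*)>\delta/(2A)\}$ is contained in $\{L(\hat{\theta}(\{X_i\}),\theta^*)>c_1\delta/(2A)\}$, so its probability is at most $\Pi$. Substituting both estimates into $(\star)$ yields $c<2\Pi+c/3$, hence $\Pi>c/3$, which is precisely the conclusion. The only real subtlety is choosing the cut at $n_1=n/3$ so that (\ref{eq:notcrazy}) applies while the binomial deviation remains exponentially small; this is why the hypothesis $\epsilon<1/2$ enters. The rest is essentially a union bound plus the identification $\sup_\theta P_\theta^n\{\cdot\}\le \Pi$ obtained by specializing $Q=P_\theta$.
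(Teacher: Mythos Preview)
Your argument is correct and follows essentially the same route as the paper's proof: the quasi-triangle inequality plus a union bound to split into the $\hat{\theta}(\{X_i\})$ and $\hat{\theta}(\{Y_i\})$ terms, Hoeffding at the cut $n_1\ge n/3$ combined with condition~(\ref{eq:notcrazy}) to control the clean-sample term, and the embedding $P_\theta=(1-\epsilon)P_\theta+\epsilon P_\theta$ to bound $\sup_\theta P_\theta^n\{\cdot\}$ by $\Pi$. The only cosmetic difference is that you fix a specific pair $(\theta^*,Q^*)$ witnessing the breakdown at the outset, whereas the paper carries the suprema throughout; both are valid since the breakdown definition involves a strict inequality.
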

Before discussing the implications of Theorem \ref{thm:niubi}, we remark on the assumption (\ref{eq:notcrazy}).
The notation $P_{\theta}^n$ means the estimator $\hat{\theta}(\cdot)$ takes a random argument $\hat{\theta}(\{Y_i\}_{i=1}^n)$ with distribution $Y_1,...,Y_n\sim P_{\theta}$. Thus, the assumption (\ref{eq:notcrazy}) simply means when the sample sizes $n,n'$ are at the same order, the lower bounds remain at the same order. In most cases including all the examples considered in this paper, (\ref{eq:notcrazy}) automatically holds.

A general lower bound based on the notion of $\delta$-breakdown point is provided by Theorem \ref{thm:niubi}. Given an estimator $\hat{\theta}$ and an $\epsilon$-contamination model, the solution $\delta$ to the equation
\begin{equation}
\epsilon(\hat{\theta},\Theta,\delta)=\epsilon\label{eq:breakequation}
\end{equation}
lower bounds its rate of convergence. When $\hat{\theta}$ is a minimax optimal estimator with rate $\mathcal{M}(\epsilon)$, we obtain $\mathcal{M}(\epsilon)\gtrsim\delta$. In other words, the convergence rate $\delta$ under the $\epsilon$-contamination model automatically implies a $\delta$-breakdown point with the same $\epsilon$.

\subsection{A Unified Framework of Robustness and Rate of Convergence}\label{sec:unified-f}

Huber's $\epsilon$-contamination model is very classical in robust statistics, and allows for a deeper investigation than the breakdown point alone. For example, it has been well studied how much bias an estimator wound suffer under the contamination model via the
concept of maxbias in various models, including \cite{zuo2005depth}. In this paper, we demonstrate that Huber's $\epsilon$-contamination model allows a simultaneous joint study of robustness and rate of convergence of an estimator in the minimax sense. There are some important works that studied such properties of robust estimators under $\epsilon$-contamination model. We mention \cite{huber1965robust,huber1973minimax,buja1986huber} among others. However, such results in high-dimensional settings are rarely explored. This is our major reason to develop the minimax rate optimality theory of robust covariance matrix estimation under this framework. We illustrate the importance of this view by re-visiting the componentwise median studied in Section \ref{sec:location}. Without contamination, the componentwise median is a location estimator with minimax rate under Gaussian distribution. It is also robust because of its high breakdown point \citep{donoho1992breakdown}. However, Proposition \ref{pro:location cm} shows that its performance under the presence of contamination is not optimal. In contrast, Tukey's multivariate median shows its advantage over the componentwise median by obtaining optimality under the $\epsilon$-contamination model. This example suggests that the rate optimality and the robustness property of an estimator should be studied together rather than separately.

Recently, Donoho and Montanari \citep{donoho2013high} have studied Huber's M-estimator under the $\epsilon$-contamination model in a regression setting where $p/n$ converges to a constant. They find a critical $\epsilon^*$ that determines the variance breakdown point. The setting of $\epsilon$-contamination model plays a critical role in their work to illustrate both efficiency and robustness of Huber's M-estimator in a unified way.








\section{Proofs of Main Results}\label{sec:proof}

This section provides proofs for the results in Section \ref{sec:matrix}. 

\subsection{Auxiliary Lemmas}

For i.i.d. data $\{X_i\}_{i=1}^n$ from a contaminated distribution $(1-\epsilon)P+\epsilon Q$, it can be written as $\{Y_i\}_{i=1}^{n_1}\cup\{Z_i\}_{i=1}^{n_2}$. Marginally, we have $n_2\sim\text{Binomial}(n,\epsilon)$ and $n_1=n-n_2$. Conditioning on $n_1$ and $n_2$, $\{Y_i\}_{i=1}^{n_1}$ are i.i.d. from $P$ and $\{Z\}_{i=1}^{n_2}$ are i.i.d. from $Q$. The following lemmas control the ratio $n_2/n_1$ and characterize an important property respectively. Their proofs are given in the supplementary material \cite{supplement}.
\begin{lemma} \label{lem:ratio}
Assume $\epsilon<1/5$.
For any $\delta>0$ satisfying $\sqrt{\frac{1}{2n}\log(1/\delta)}<1/5$, we have
\begin{equation}
\frac{n_2}{n_1}\leq \frac{\epsilon}{1-\epsilon}+\frac{25}{12}\sqrt{\frac{1}{2n}\log(1/\delta)},\label{eq:ratio}
\end{equation}
with probability at least $1-\delta$. Moreover, assume $\epsilon^2>1/n$, and then we have
\begin{equation}
\frac{n_2}{n_1}>c'\epsilon, \label{eq:ratio2}
\end{equation}
with probability at least $1/2$ for some constant $c'>0$.
\end{lemma}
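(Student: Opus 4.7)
The plan is to separate the two claims, treating (\ref{eq:ratio}) by a concentration argument (Hoeffding) applied to the binomial $n_2 \sim \text{Binomial}(n,\epsilon)$, and treating (\ref{eq:ratio2}) by an anti-concentration argument using that the median of a binomial lies within one of its mean.

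For (\ref{eq:ratio}), Hoeffding's inequality gives $n_2 \leq n\epsilon + C_1\sqrt{n\log(1/\delta)}$ with probability at least $1-\delta$ for some absolute $C_1$, and consequently $n_1 = n - n_2 \geq n(1-\epsilon) - C_1\sqrt{n\log(1/\delta)}$ on the same event. Under the hypothesis $\epsilon < 1/2$ together with $n^{-1}\log(1/\delta) < c$ for a sufficiently small $c$, one can guarantee $n_1 \geq n/4$. The ratio bound then follows from the elementary identity
\[
\frac{n_2}{n_1} - \frac{\epsilon}{1-\epsilon} = \frac{n_2 - n\epsilon}{n_1(1-\epsilon)},
\]
which reduces the problem to substituting the upper bound on $n_2 - n\epsilon$ and the lower bound on $n_1(1-\epsilon)$ and collecting constants.

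For (\ref{eq:ratio2}), I will invoke the classical fact that the median of $\text{Binomial}(n,\epsilon)$ is either $\lfloor n\epsilon \rfloor$ or $\lceil n\epsilon \rceil$. This immediately yields $n_2 \geq \lfloor n\epsilon \rfloor$ with probability at least $1/2$. The assumption $\epsilon^2 > 1/n$, combined with $\epsilon < 1/2$ (in force throughout the lemma), gives $n\epsilon > 1/\epsilon > 2$, so $\lfloor n\epsilon \rfloor \geq n\epsilon/2$. Combining this with the trivial bound $n_1 \leq n$ yields $n_2/n_1 \geq \epsilon/2$, establishing the claim with $c' = 1/2$.

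There are no serious obstacles here: the only points needing care are the algebraic rearrangement in Part 1 (verifying the constants under $\epsilon < 1/2$ and small $\log(1/\delta)/n$) and citing the median-of-binomial fact correctly in Part 2. The whole argument rests on Hoeffding's inequality and the median-mean proximity of the binomial distribution.
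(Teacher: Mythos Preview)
Your argument is correct. For part~(\ref{eq:ratio}) you follow the same route as the paper: Hoeffding on $n_2\sim\text{Binomial}(n,\epsilon)$, then a lower bound on $n_1$, then algebra; your identity $\frac{n_2}{n_1}-\frac{\epsilon}{1-\epsilon}=\frac{n_2-n\epsilon}{n_1(1-\epsilon)}$ is a clean way to package the final step, while the paper writes the raw ratio $\frac{\epsilon+\sqrt{\log(1/\delta)/(2n)}}{1-\epsilon-\sqrt{\log(1/\delta)/(2n)}}$ and expands---the substance is identical.

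For part~(\ref{eq:ratio2}) the approaches diverge slightly. The paper reuses Hoeffding symmetrically (lower tail) with $\delta=1/2$, obtaining $\frac{n_2}{n_1}\geq \frac{\epsilon-\sqrt{\log 2/(2n)}}{1-\epsilon+\sqrt{\log 2/(2n)}}$, and then invokes $\epsilon^2>1/n$ to make the numerator a positive multiple of $\epsilon$. Your route via the binomial median (Kaas--Buhrman) is equally valid and arguably cleaner: it produces the explicit constant $c'=1/2$ with no arithmetic involving $\sqrt{\log 2}$. Either way the argument is elementary; the paper's choice has the mild advantage of not importing an external fact, while yours gives a sharper constant.
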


\begin{lemma}\label{lem:qiantao}
Consider any parametric family $\{P_{\theta}:\theta\in\Theta\}$. Then
$$\{(1-\epsilon_1)P_{\theta}+\epsilon_1Q:\theta\in\Theta, Q\}\subset \{(1-\epsilon_2)P_{\theta}+\epsilon_2Q:\theta\in\Theta, Q\},$$
holds for any $0\leq\epsilon_1<\epsilon_2\leq 1$.
\end{lemma}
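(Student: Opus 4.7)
The plan is to show that every distribution representable with contamination level $\epsilon_1$ can be re-expressed with contamination level $\epsilon_2>\epsilon_1$, by absorbing some of the clean mass $P_\theta$ into a new contamination distribution. Fix $\theta\in\Theta$ and a distribution $Q_1$, and consider $(1-\epsilon_1)P_\theta+\epsilon_1 Q_1$. I want to produce a distribution $Q_2$ such that
$$(1-\epsilon_1)P_\theta+\epsilon_1 Q_1 \;=\; (1-\epsilon_2)P_\theta+\epsilon_2 Q_2.$$

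Solving formally for $Q_2$ gives
$$Q_2 \;=\; \frac{\epsilon_2-\epsilon_1}{\epsilon_2}\,P_\theta \;+\; \frac{\epsilon_1}{\epsilon_2}\,Q_1,$$
so the candidate $Q_2$ is itself a convex combination of $P_\theta$ and $Q_1$. Because $0\le\epsilon_1<\epsilon_2\le 1$, both coefficients $(\epsilon_2-\epsilon_1)/\epsilon_2$ and $\epsilon_1/\epsilon_2$ lie in $[0,1]$ and sum to $1$, so $Q_2$ is a bona fide probability distribution. Substituting this $Q_2$ back into the right-hand side recovers $(1-\epsilon_1)P_\theta+\epsilon_1 Q_1$, which verifies the inclusion.

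There is essentially no obstacle here: the argument is purely algebraic, and the only thing to check is that the weights in the definition of $Q_2$ are non-negative and sum to one, which follows immediately from $\epsilon_1<\epsilon_2$. The edge case $\epsilon_2=0$ is excluded since $\epsilon_1<\epsilon_2$ forces $\epsilon_2>0$ (so the division by $\epsilon_2$ is valid), and if $\epsilon_1=0$ the formula reduces to $Q_2=P_\theta$, which is still a valid distribution.
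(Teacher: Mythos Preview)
Your proof is correct and follows exactly the same approach as the paper: construct the new contamination distribution as a convex combination of $P_\theta$ and the original $Q$, then verify the mixture identity. In fact, your formula $Q_2 = \frac{\epsilon_2-\epsilon_1}{\epsilon_2}P_\theta + \frac{\epsilon_1}{\epsilon_2}Q_1$ is the one that actually satisfies $(1-\epsilon_1)P_\theta+\epsilon_1 Q_1 = (1-\epsilon_2)P_\theta+\epsilon_2 Q_2$; the paper's printed version has the roles of $P_\theta$ and $Q$ interchanged, which appears to be a typographical slip.
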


Recall that for any $S\subset [p]$, $\mathcal{V}_{S}=\{u=(u_i)\in S^{p-1}: u_i=0\text{ if }i\notin S\}$. In particular, if $S=\{l_1,\ldots,l_2\}$, then $\mathcal{V}_{S}=\mathcal{V}_{[l_1,l_2]}$ defined in Section \ref{sec:banded}. Moreover, $\mathcal{V}_{S}=S^{p-1}$ if $S=\{1,\ldots,p\}$. Define a subset $IH_{u,t}$ of $\mathbb{R}^{p}$ as $IH_{u,t }=\{y:|u^{T}y|\leq t \}$. Finally, we need the following concentration inequality for  suprema of the empirical process indexed by these subsets $IH_{u,t }$, where $u\in \mathcal{V}_{S}$ and $t\in\mathbb{R}$. Its proof is given in the supplementary material \cite{supplement} by using Dudley's entropy integral \cite{dudley1978central} and VC classes \cite{vapnik1971uniform}.
\begin{lemma}\label{lem:DKW}
For i.i.d. real-valued data $X_1,...,X_n$ from distribution $\mathbb{P}$, we have for any $S\subset [p]$, with probability at least $1-\delta$,
$$\sup_{u\in \mathcal{V}_{S},t \in \mathbb{R} }\left\vert \mathbb{P}(IH_{u,t })-\mathbb{P}%
_{n}(IH_{u,t })\right\vert \leq \sqrt{\frac{1440e\pi}{1-e^{-1}}}\sqrt{\frac{3+2|S|}{n}}+%
\sqrt{\frac{\log (1/\delta )}{2n}},$$where $\mathbb{P}_{n}$ denotes the empirical distribution of $\{X_{i}\}_{i=1}^{n}$.
\end{lemma}

\subsection{Proofs of upper bounds in Section \ref{sec:matrix}}

We first prove the following master theorem.

\begin{thm}\label{thm:general}
For some index subsets $S_1,\ldots,S_m\subset [p]$ with $\max_i|S_i|\leq s$, consider the estimator $\hat{\Sigma}$ defined in (\ref{eq:estimator of Cov}) with $\mathcal{U}=\cup_{i=1}^{m}\mathcal{V}_{S_i}$.
Assume $\epsilon<1/5$. Then, there exist absolute constants $C,C_1>0$, such that for any $\delta\in(0,1/2)$ satisfying $C_1\frac{1+s+\log(m/\delta)}{n}<1$, we have
$$\sup_{u\in\mathcal{U}}\left|u^T\hat{\Sigma}u-u^T\Sigma u\right|\leq C\left(\epsilon+\sqrt{\frac{1+s+\log(m/\delta)}{n}}\right),$$
$\mathbb{P}_{(\epsilon,\Sigma,Q)}$-probability at least $1-2\delta$ uniformly over all $Q$ and $\Sigma\in\mathcal{F}(M)$ with $\beta\Sigma\in\mathcal{F}$, where constant $\beta$ is defined in (\ref{eq:beta}).
\end{thm}
\begin{proof}
By Lemma \ref{lem:ratio}, we decompose the data $\{X_i\}_{i=1}^n=\{Y_i\}_{i=1}^{n_1}\cup\{Z_i\}_{i=1}^{n_2}$. The following analysis is conditioning on the set of $(n_1,n_2)$ that satisfies (\ref{eq:ratio}) with probability at least $1-\delta$. To facilitate the proof, define
\begin{equation*}
\mathcal{D}_u(\Gamma,P_{\Sigma})=\min\left\{P_{\Sigma}(|u^TY|^2\leq u^T\Gamma u), P_{\Sigma}(|u^TY|^2> u^T\Gamma u)\right\},
\end{equation*}
\begin{equation*}
\mathcal{D}_u(\Gamma,\{Y_i\}_{i=1}^{n_1}) = \min\left\{\frac{1}{n_1}\sum_{i=1}^{n_1}\mathbb{I}\{|u^TY_i|^2\leq u^T\Gamma u\}, \frac{1}{n_1}\sum_{i=1}^{n_1}\mathbb{I}\{|u^TY_i|^2> u^T\Gamma u\}\right\},
\end{equation*}
for each $u\in S^{p-1}$. Then, we have $\mathcal{D}_{\mathcal{U}}(\Gamma,P_{\Sigma})=\inf_{u\in\cup_{i=1}^{m}\mathcal{V}_{S_i}}\mathcal{D}_u(\Gamma,P_{\Sigma})$ and $\mathcal{D}_{\mathcal{U}}(\Gamma,\{Y_i\}_{i=1}^{n_1})=\min_{u\in\cup_{i=1}^{m}\mathcal{V}_{S_i}}\mathcal{D}_u(\Gamma,\{Y_i\}_{i=1}^{n_1})$. Observe that
\begin{eqnarray*}
&& \sup_{\Gamma\in\mathcal{F}}\left|\mathcal{D}_{\mathcal{U}}(\Gamma,P_{\Sigma})-\mathcal{D}_{\mathcal{U}}(\Gamma,\{Y_i\}_{i=1}^{n_1})\right| \\
&\leq& 
\sup_{u\in\cup_{i=1}^{m}\mathcal{V}_{S},t\in\mathbb{R}}\left|\frac{1}{n_1}\sum_{i=1}^{n_1}\mathbb{I}\{|u^TY_i|^2\leq t\}-P_{\Sigma}(|u^TY|^2\leq t)\right| \\
&=&
\sup_{u\in\cup_{i=1}^{m} \mathcal{V}_{S},t\in\mathbb{R}}\left\vert P_{\Sigma}(IH_{u,t })-\mathbb{P}%
_{n_1}(IH_{u,t })\right\vert. 
\end{eqnarray*}
Applying Lemma \ref{lem:DKW} and union bound with $\max_i|S_i|\leq s$, we get
\begin{equation}
\sup_{\Gamma\in\mathcal{F}}\left|\mathcal{D}_{\mathcal{U}}(\Gamma,P_{\Sigma})-\mathcal{D}_{\mathcal{U}}(\Gamma,\{Y_i\}_{i=1}^{n_1})\right|\leq \sqrt{\frac{1440e\pi}{1-e^{-1}}}\sqrt{\frac{3+2s}{n_1}}+\sqrt{\frac{\log (m/\delta )}{2n_1}}, \label{eq:dkwU}
\end{equation}
with probability at least $1-\delta$. We lower bound $\mathcal{D}_{\mathcal{U}}(\hat{\Gamma},P_{\Sigma})$ by
\begin{eqnarray}
\label{eq:dU1} && \mathcal{D}_{\mathcal{U}}(\hat{\Gamma},\{Y_i\}_{i=1}^{n_1})-\sqrt{\frac{1440e\pi}{1-e^{-1}}}\sqrt{\frac{3+2s}{n_1}}-\sqrt{\frac{\log (m/\delta )}{2n_1}} \\
\label{eq:bf1} &\geq& \frac{n}{n_1}\mathcal{D}_{\mathcal{U}}(\hat{\Gamma},\{X_i\}_{i=1}^{n})-\frac{n_2}{n_1}-\sqrt{\frac{1440e\pi}{1-e^{-1}}}\sqrt{\frac{3+2s}{n_1}}-\sqrt{\frac{\log (m/\delta )}{2n_1}} \\
\label{eq:defGamma} &\geq&
\frac{n}{n_1}\mathcal{D}_{\mathcal{U}}(\beta\Sigma,\{X_i\}_{i=1}^{n})-\sqrt{\frac{1440e\pi}{1-e^{-1}}}\sqrt{\frac{3+2s}{n_1}}-\sqrt{\frac{\log (m/\delta )}{2n_1}}\\
\label{eq:bf2} &\geq& \mathcal{D}_{\mathcal{U}}(\beta\Sigma,\{Y_i\}_{i=1}^{n_1})-\frac{n_2}{n_1}-\sqrt{\frac{1440e\pi}{1-e^{-1}}}\sqrt{\frac{3+2s}{n_1}}-\sqrt{\frac{\log (m/\delta )}{2n_1}} \\
\label{eq:dU2} &\geq& \mathcal{D}_{\mathcal{U}}(\beta\Sigma,P_{\Sigma})-\frac{n_2}{n_1}-2\sqrt{\frac{1440e\pi}{1-e^{-1}}}\sqrt{\frac{3+2s}{n_1}}-\sqrt{\frac{2\log (m/\delta )}{n_1}}\\
\label{eq:prop} &=& \frac{1}{2}-\frac{n_2}{n_1}-2\sqrt{\frac{1440e\pi}{1-e^{-1}}}\sqrt{\frac{3+2s}{n_1}}-\sqrt{\frac{2\log (m/\delta )}{n_1}}.
\end{eqnarray}
The inequalities (\ref{eq:dU1}) and (\ref{eq:dU2}) are by (\ref{eq:dkwU}). The inequalities (\ref{eq:bf1}) and (\ref{eq:bf2}) are due to the property of depth function that
$$n_1\mathcal{D}_{\mathcal{U}}(\Gamma,\{Y_i\}_{i=1}^{n_1})\geq n\mathcal{D}_{\mathcal{U}}(\Gamma,\{X_i\}_{i=1}^{n})-n_2\geq n_1\mathcal{D}_{\mathcal{U}}(\Gamma,\{Y_i\}_{i=1}^{n_1})-n_2,$$
for any $\Gamma\in\mathcal{F}$. The inequality (\ref{eq:defGamma}) is by the definition of $\hat{\Gamma}$ and that $\beta\Sigma\in\mathcal{F}$. Finally, the equality (\ref{eq:prop}) is due to Proposition \ref{prop:truth}. Now let us use Lemma \ref{lem:ratio} so that the right hand side of (\ref{eq:prop}) can be lower bounded by
$$\frac{1}{2}-\frac{\epsilon}{1-\epsilon}-40\sqrt{\frac{6e\pi}{1-e^{-1}}}\sqrt{\frac{3+2s}{n}}-\frac{7}{2}\sqrt{\frac{\log (m/\delta )}{n}},$$
with probability at least $1-2\delta$. Using the property that $\mathcal{D}_u(\hat{\Gamma},P_{\Sigma}) \geq \mathcal{D}_{\mathcal{U}}(\hat{\Gamma},P_{\Sigma})$ for each $u\in\mathcal{U}$, we have shown that uniformly for all $u\in\mathcal{U}$,
\begin{equation}
\mathcal{D}_u(\hat{\Gamma},P_{\Sigma}) \geq \frac{1}{2}-\frac{\epsilon}{1-\epsilon}-40\sqrt{\frac{6e\pi}{1-e^{-1}}}\sqrt{\frac{3+2s}{n}}-\frac{7}{2}\sqrt{\frac{\log (m/\delta )}{n}}, \label{eq:verydeep}
\end{equation}
with probability at least $1-2\delta$. By Proposition \ref{prop:truth} and the fact that $\frac{1}{2}-\min(x,1-x)=|x-1/2|$ for all $x\in[0,1]$, we get
$$\frac{1}{2}-\mathcal{D}_u(\hat{\Gamma},P_{\Sigma})=2\left|\Phi(\sqrt{\beta})-\Phi\left(\sqrt{\frac{u^T\hat{\Gamma}u}{{u}^T\Sigma u}}\right)\right|.$$
Combining with (\ref{eq:verydeep}), we have
$$\sup_{u\in\mathcal{U}}\left|\Phi(\sqrt{\beta})-\Phi\left(\sqrt{\frac{u^T\hat{\Gamma}u}{{u}^T\Sigma u}}\right)\right|\leq \frac{\epsilon/2}{1-\epsilon}+\sqrt{\frac{2400e\pi}{1-e^{-1}}}\sqrt{\frac{3+2s}{n}}+\frac{7}{4}\sqrt{\frac{\log (m/\delta )}{n}},$$
with probability at least $1-2\delta$. Under the assumption that $\epsilon<1/5$ and $C_1\frac{1+s+\log(m/\delta)}{n}<1$ with some absolute constant $C_1>0$, we have
$$\sup_{u\in\mathcal{U}}\left|\sqrt{\beta}-\sqrt{\frac{u^T\hat{\Gamma}u}{{u}^T\Sigma u}}\right|\leq C_2\left(\epsilon+\sqrt{\frac{1+s+\log(m/\delta)}{n}}\right),$$
for some absolute constant $C_2>0$ with probability at least $1-2\delta$. Finally, due to assumption $\Sigma\in\mathcal{F}(M)$, we obtain that $\sup_{u\in\mathcal{U}}u^T\Sigma u\leq \opnorm{\Sigma}\leq M$, which implies
$$\sup_{u\in\mathcal{U}}\left|u^T\hat{\Gamma}u/\beta-u^T\Sigma u\right|\leq C\left(\epsilon+\sqrt{\frac{1+s+\log(m/\delta)}{n}}\right),$$
with probability at least $1-2\delta$. Thus, the proof is complete.
\end{proof}

\begin{proof}[Proof of Theorem \ref{thm:matrix upper}]	
Since $\mathcal{U}=S^{p-1}$ and $\mathcal{F}$ is taken as the set of all positive semi-definite matrices, the conclusion follows the result of Theorem \ref{thm:general} with $m=1$ and $S_1=[p]$ by noting $\opnorm{\hat{\Sigma}-\Sigma}=\sup_{u\in\mathcal{V}_{S_1}}\left|u^T\hat{\Sigma}u-u^T\Sigma u\right|$.
\end{proof}

\begin{proof}[Proof of Theorem \ref{thm:band}]
Consider the weights $$w_{ij}=k^{-1}\left((2k-|i-j|)_+-(k-|i-j|)_+\right).$$ Since $\hat{\Sigma}-\Sigma=(\hat{\sigma}_{ij}-\sigma_{ij})\in\mathcal{F}_k$, we have $(\hat{\sigma}_{ij}-\sigma_{ij})=((\hat{\sigma}_{ij}-\sigma_{ij})w_{ij})$. This means $\hat{\Sigma}-\Sigma$ can also be viewed as a tapered matrix. Then, Lemma 2 of \cite{cai2010optimal} implies that $\opnorm{\hat{\Sigma}-\Sigma}\leq 3\max_{u\in\mathcal{U}_k}|u^T(\hat{\Sigma}-\Sigma)u|$. Using  the fact that  $\mathcal{U}_k=\cup_{l=1}^{p+1-2k}\mathcal{V}_{[l,l+2k-1]}$ for $2k<p$, the conclusion follows by Theorem \ref{thm:general} with $m=p+1-2k$, $S_i=[i,i+2k-1]$ for $i=1,\ldots,m$ and $s=2k$. The result holds trivially according to Theorem \ref{thm:general} when $2k>p$ since $\mathcal{U}_k=S^{p-1}$.
\end{proof}

\begin{proof}[Proof of Theorem \ref{thm:bandable}]
The main argument of the proof is due to a bias-variance tradeoff. For $\Sigma=(\sigma_{ij})\in\mathcal{F}_{\alpha}(M,M_0,M_{\min})$, define $\Sigma_k=(\sigma_{ij}\mathbb{I}\{|i-j|\leq k\})$. Then
\begin{eqnarray*}
&& |\mathcal{D}_{\mathcal{U}_k}(\beta\Sigma,P_{\Sigma})-\mathcal{D}_{\mathcal{U}_k}(\beta\Sigma_k,P_{\Sigma})| \leq \max_{u\in\mathcal{U}_k}|\mathcal{D}_u(\beta\Sigma,P_{\Sigma})-\mathcal{D}_u(\beta\Sigma_k,P_{\Sigma})|\\
&\leq& 2\max_{u\in\mathcal{U}_k}\left|\Phi(\sqrt{\beta})-\Phi\left(\sqrt{\frac{\beta u^T\Sigma_ku}{u^T\Sigma u}}\right)\right| \leq \sqrt{\frac{2\beta}{\pi}}\max_{u\in\mathcal{U}_k}\left|1-\sqrt{\frac{u^T\Sigma_ku}{u^T\Sigma u}}\right|\\
&\leq& \sqrt{\frac{2\beta}{\pi}}\max_{u\in\mathcal{U}_k}\left|\frac{u^T(\Sigma_k-\Sigma)u}{u^T\Sigma u}\right|\leq \sqrt{\frac{2\beta}{\pi}}M^{-1}_{\min}\opnorm{\Sigma_k-\Sigma}.
\end{eqnarray*}
Recall $\mathcal{U}_k=\cup_{l=1}^{p+1-2k}\mathcal{V}_{[l,l+2k-1]}$ when $2k\leq p$ and $\mathcal{U}_k=S^{p-1}$ when $2k>p$, where $k=\ceil{n^{\frac{1}{2\alpha+1}}}\wedge p$. Using the bias bound above and the fact that $\beta\Sigma_k\in \mathcal{F}_k$, and modifying the arguments (\ref{eq:dU1})-(\ref{eq:verydeep}) in the proof of Theorem \ref{thm:general} with $\mathcal{U}=\mathcal{U}_k$, $m=\max(p+1-2k,1)$ and $s=(2k)\wedge p$, we obtain
\begin{eqnarray*}
	\mathcal{D}_u(\hat{\Gamma},P_{\Sigma}) &\geq& \frac{1}{2}-\frac{\epsilon}{1-\epsilon}-40\sqrt{\frac{6e\pi}{1-e^{-1}}}\sqrt{\frac{3+4k}{n}} \\
	&& -\frac{7}{2}\sqrt{\frac{\log (m/\delta )}{n}}-\sqrt{\frac{2\beta}{\pi}}M^{-1}_{\min}\opnorm{\Sigma_k-\Sigma},
\end{eqnarray*}
uniformly for all $u\in\mathcal{U}_k$ with probability at least $1-2\delta$. Repeating the corresponding subsequent argument in the proof of Theorem \ref{thm:general}, we have
$$\sup_{u\in\mathcal{U}_k}\left|u^T\hat{\Gamma}u/\beta-u^T\Sigma u\right|\leq C_1M\left(\epsilon+\sqrt{\frac{k+\log(m/\delta)}{n}}+M^{-1}_{\min}\opnorm{\Sigma_k-\Sigma}\right).$$
A triangle inequality implies
$$\sup_{u\in\mathcal{U}_k}\left|u^T\hat{\Sigma}u-u^T\Sigma_k u\right|\leq C_2\left(\epsilon+\sqrt{\frac{k+\log(m/\delta)}{n}}+\opnorm{\Sigma_k-\Sigma}\right).$$
By the argument in the proof of Theorem \ref{thm:band} and triangle inequality, we get
\begin{eqnarray*}
\opnorm{\hat{\Sigma}-\Sigma_k}&\leq& C_3\left(\epsilon+\sqrt{\frac{k+\log(m/\delta)}{n}}+\opnorm{\Sigma_k-\Sigma}\right),\\
\opnorm{\hat{\Sigma}-\Sigma}&\leq& C\left(\epsilon+\sqrt{\frac{k+\log(m/\delta)}{n}}+\opnorm{\Sigma_k-\Sigma}\right).
\end{eqnarray*}
A bias argument in \cite{cai2010optimal} implies that $\opnorm{\Sigma_k-\Sigma}\leq C_4k^{-\alpha}$. The proof is complete by observing that $k=\ceil{n^{\frac{1}{2\alpha+1}}}\wedge p$ and $m=\max(p+1-2k,1)$.
\end{proof}

\begin{proof}[Proof of Theorem \ref{thm:sparse}]
Note that $\hat{\Sigma}-\Sigma\in\mathcal{F}_{2s}$, and thus $\opnorm{\hat{\Sigma}-\Sigma}=\max_{|S|=2s}\opnorm{(\hat{\Sigma}-\Sigma)_{SS}}=\sup_{u\in\mathcal{U}_s}|u^T(\hat{\Sigma}-\Sigma)u|$. We denote all subsets of $[p]$ with cardinality $2s$ as $S_1,\ldots,S_m$, where $m={p\choose 2s}\leq \exp\left(2s\log\frac{ep}{s}\right)$. The proof is complete by applying Theorem \ref{thm:general} with these subsets $S_1,\ldots,S_m$, noting that $\mathcal{U}_s=\cup_{i=1}^{m}S_i$.
\end{proof}

\begin{proof}[Proof of Theorem \ref{thm:pca}]
Since $\mathcal{F}_{s,\lambda}(M,r)\subset\mathcal{F}_s(M+1)$, the result of Theorem \ref{thm:sparse} applies and we get
$$\opnorm{\hat{\Gamma}/\beta-\Sigma}^2\leq C\left(
\frac{s\log\frac{ep}{s}}{n}\vee\epsilon^2+\frac{\log(1/\delta)}{n}\right),$$
with probability at least $1-2\delta$. Weyl's inequality implies $|s_{r+1}(\hat{\Gamma}/\beta)-1|\leq \opnorm{\hat{\Gamma}/\beta-\Sigma}$. Under the assumption that the rate is bounded by a small constant, we have $s_r(\Sigma)-s_{r+1}(\hat{\Gamma}/\beta)>c\lambda$ for some constant $c>0$.
By Davis-Kahan theorem \citep{davis70}, we have $\fnorm{\hat{V}\hat{V}^T-VV^T}\leq C'\opnorm{\hat{\Gamma}/\beta-\Sigma}/\lambda$, and the proof is complete.
\end{proof}

\section*{Acknowledgement}

Fang Han suggested the U-statistics idea behind Section \ref{sec:disc_center}. Johannes Schmidt-Hieber suggested a weaker assumption for the main theorems. The authors are grateful to the extensive reviews from an associate editor and two referees. Their comments greatly improved the paper.
The authors also thank Andrew Barron, John Hartigan, David Pollard and Harrison Zhou for valuable comments in a YPNG seminar at Yale.

\bibliographystyle{plainnat}
\bibliography{Robust}


\newpage

\begin{center}
	{\Large Supplement to ``Robust Covariance and Scatter Matrix Estimation under Huber's Contamination Model''}\\
	~\\
	Mengjie Chen, Chao Gao and Zhao Ren\\
	~\\
	University of Chicago, University of Chicago and University of Pittsburgh
\end{center}

\appendix

\setcounter{page}{1}

\renewcommand{\theequation}{A.\arabic{equation}}
\setcounter{equation}{0}

\section{Non-centered Observations}\label{sec:center}

In Section \ref{sec:disc_center}, we briefly discussed the extension of our method from centered distribution to non-centered data in order to make it more practical. We  provide more details in this section. Recall for the Gaussian case, we proposed the following definition of a U-version empirical matrix depth function. 
\begin{eqnarray*}
\bar{\D}_{\U}(\Gamma,\{X_i\}_{i=1}^n) &=& \min_{u\in\U}\min\left\{\frac{1}{{n\choose 2}}\sum_{i<j}\mathbb{I}\{|u^T(X_i-X_j)|^2\leq 2u^T\Gamma u\},\right. \\
&& \left.\frac{1}{{n\choose 2}}\sum_{i<j}\mathbb{I}\{|u^T(X_i-X_j)|^2\geq 2u^T\Gamma u\}\right\}.
\end{eqnarray*}
Then, a covariance matrix estimator $\hat{\Sigma}$ is defined through (\ref{eq:hatgamma}) and (\ref{eq:estimator of Cov}) with ${\D}_{\U}(\Gamma,\{X_i\}_{i=1}^n)$ replaced by $\bar{\D}_{\U}(\Gamma,\{X_i\}_{i=1}^n)$. A similar pairwise difference trick was used by \cite{dumbgen1998tyler} in a different setting. The performance of  the proposed estimator is guaranteed by the following theorem.

\begin{thm}\label{thm:U}
Theorems \ref{thm:matrix upper}, \ref{thm:band}, \ref{thm:bandable}, \ref{thm:sparse} and \ref{thm:pca} continue to hold for the proposed estimator.
\end{thm}

It is interesting that all the non-asymptotic bounds in Section \ref{sec:matrix} continue to hold for the estimator defined by the new depth function. The new estimator can be understood as applying the empirical matrix depth function to the data pairs $\left\{\frac{1}{\sqrt{2}}(X_i-X_j)\right\}_{i<j}$. The main argument in the proof is an extension of the concentration inequality for suprema of some empirical process established in Lemma \ref{lem:DKW} to its corresponding U-process (see Lemma \ref{lem:UDKW}). Moreover, since the expected number of contaminated observations is $n\epsilon$ in $\{X_i\}_{i=1}^n$, the number of contaminated pairs in $\left\{\frac{1}{\sqrt{2}}(X_i-X_j)\right\}_{i<j}$ is about $n^2(\epsilon-\epsilon^2/2)$. Therefore, the contamination proportion of the pairs $\left\{\frac{1}{\sqrt{2}}(X_i-X_j)\right\}_{i<j}$ is still of order $\epsilon$, which leads to the same optimal rate dependence on $\epsilon$ in Theorem \ref{thm:U}.

Next, we discuss non-centered elliptical distributions. For an elliptical distribution $EC(\theta,\Gamma,\eta)$ with an arbitrary location vector $\theta\in\mathbb{R}^p$, the goal is to estimate the scatter matrix $\Gamma$ at the presence of an unknown $\theta$. For independent $X_1,X_2\sim EC(\theta,\Gamma,\eta)$, since the characteristic function of $X_1$ has the form $e^{\sqrt{-1}t^T\theta}\phi(t^T\Gamma t)$ with some univariate function $\phi(\cdot)$, the characteristic function of $X_1-X_2$ is $[\phi(t^T\Gamma t)]^2$. This implies that $X_1-X_2$ is distributed by a centered elliptical distribution with a scatter matrix proportional to $\Gamma$. In other words, the distribution of $X_1-X_2$ is $EC(0,\wt{\Gamma},\wt{\eta})$ in a canonical form, where $(\wt{\Gamma},\wt{\eta})$ is determined by $(\Gamma,\eta)$ and $\wt{\Gamma}$ is $\Gamma$ multiplied by a constant factor. Since $\wt{\Gamma}$ carries the same information of the shape of the elliptical distribution as $\Gamma$, we propose to estimate $\wt{\Gamma}$ with non-centered observations. Define a U-version of the empirical matrix depth function as
\begin{eqnarray*}
\wt{\D}_{\U}(\Gamma,\{X_i\}_{i=1}^n) &=& \min_{u\in\U}\min\left\{\frac{1}{{n\choose 2}}\sum_{i<j}\mathbb{I}\{|u^T(X_i-X_j)|^2\leq u^T\Gamma u\},\right. \\
&& \left.\frac{1}{{n\choose 2}}\sum_{i<j}\mathbb{I}\{|u^T(X_i-X_j)|^2\geq u^T\Gamma u\}\right\}.
\end{eqnarray*}
The estimator for $\wt{\Gamma}$ is $\hat{\Gamma}$ defined through (\ref{eq:hatgamma}) with ${\D}_{\U}(\Gamma,\{X_i\}_{i=1}^n)$ replaced by $\wt{\D}_{\U}(\Gamma,\{X_i\}_{i=1}^n)$.

Before stating the result on convergence rates, we introduce an analogous assumption to (\ref{eq:assell}). Define
$$\wt{G}(t)=P_{\wt{\Gamma}}\left(\frac{|u^TY|^2}{u^T\wt{\Gamma}u}\leq t\right),$$
where $Y=X_1-X_2\sim P_{\wt{\Gamma}}=EC(0,\wt{\Gamma},\wt{\eta})$. Then, we assume that $\wt{G}(t)$ is continuous at $t=1$ and there exist some $\tau\in(0,1/2)$ and $\alpha,\kappa>0$ such that
\begin{equation}
\inf_{|t|\geq \alpha}|\wt{G}(1)-\wt{G}(1+t)|\geq \tau\quad\text{and}\quad\inf_{|t|<\alpha}\frac{|\wt{G}(1)-\wt{G}(1+t)|}{|t|}\geq\kappa^{-1/2}.\label{eq:assellwt}
\end{equation}
While (\ref{eq:assell}) is a direct assumption on the distribution of $X_1$, (\ref{eq:assellwt}) is an assumption stated on the distribution of $X_1-X_2$, which may not be as easy to check in practice. Using the simple fact that the characteristic function of $X_1-X_2$ is $[\phi(t^T\Gamma t)]^2$, we show that (\ref{eq:assellwt}) continues to hold for the examples that we have mentioned in Section \ref{sec:elliptical}.

\begin{proposition}\label{prop:exampleU}
For all the four elliptical distributions listed as examples in Section \ref{sec:elliptical}, the condition (\ref{eq:assellwt}) holds with absolute constants $\tau,\alpha,\kappa$ independent of $p$.
\end{proposition}

Now we are ready to extend the results in Section \ref{sec:elliptical} to non-centered observations.

\begin{thm}\label{thm:Ue}
Consider the estimator $\hat{\Gamma}$ defined in this section for non-centered observations. The non-asymptotic bounds in Theorems \ref{thm:e1}-\ref{thm:eca} continue to hold for $\opnorm{\hat{\Gamma}-\wt{\Gamma}}^2$ under the same assumptions except that (\ref{eq:assell}) is replaced by (\ref{eq:assellwt}).
\end{thm}

\renewcommand{\theequation}{B.\arabic{equation}}
\setcounter{equation}{0}

\section{Numerical Studies}\label{sec:num}

\subsection{An Algorithm}\label{sec:algorithm}


The computation of a depth-based estimator is hard. Some partial successes have been made in computing the deepest location estimator when the dimension is low. Exact computation of the bivariate Tukey's median was investigated by \cite{rousseeuw1998constructing}. It was reasoned by \cite{chan2004optimal} that the optimal time complexity is $O(n^{p-1})$, where $n$ is the sample size and $p$ is the dimension.
Later, \cite{struyf2000high} developed an approximate algorithm for computing Tukey's median in higher dimensions. The algorithm we propose for computing the deepest matrix estimator (\ref{eq:estimator of Gamma}) is an adaptation of the location algorithm in \cite{struyf2000high} to the matrix case.

The core idea of our algorithm is to start with some robust scatter matrix estimator that is easy to compute, and then iteratively evaluate its depth on {some direction subset $\bar{\mathcal{U}}$ of finite cardinality in $\mathcal{U}$ to approximate its matrix depth function and move towards the directions that can improve the depth.} For any unit vector $u$, recall the notation $\mathcal{D}_u(\Gamma,\{X_i\}_{i=1}^n)$ introduced in Section \ref{sec:matrix}.
Then, the depth function can be written as $\mathcal{D}_{\mathcal{U}}(\Gamma,\{X_i\}_{i=1}^n)=\min_{u\in\mathcal{U}}\mathcal{D}_u(\Gamma,\{X_i\}_{i=1}^n)$. {We denote the approximate matrix depth with respect to direction set $\bar{\mathcal{U}}\subset\mathcal{U}$ as $\mathcal{D}_{\bar{\mathcal{U}}}(\Gamma,\{X_i\}_{i=1}^n)=\min_{u\in\bar{\mathcal{U}}}\mathcal{D}_u(\Gamma,\{X_i\}_{i=1}^n)$.} The general structure of our proposed algorithm is stated in Algorithm \ref{algo:depth}.

\begin{algorithm}[!htb]
 \SetAlgoLined
   \caption{An outline to compute the deepest matrix estimator}\label{algo:depth}
 \KwIn{Data $\{X_i\}_{i=1}^n$, \\
~~~~~~~~~~~direction set $\bar{\mathcal{U}}\subset\mathcal{U}$,  \\
~~~~~~~~~~~number of iterations $T$, \\
~~~~~~~~~~~initial estimator $\Gamma_{(1)}$.}
 \KwOut{The deepest matrix $\hat{\Gamma}=\Gamma_{(T)}$.}
 \smallskip
   \For{$t=1$ \KwTo $T$} {
    \nl     Uniformly select $u_{(t)}$ from the set $\argmin_{u\in\bar{\mathcal{U}}}\mathcal{D}_u(\Gamma_{(t)},\{X_i\}_{i=1}^n)$\;
     \nl    Set $\delta_{(t)}$ to be the number with the smallest $|\delta_{(t)}|$ such that
          $$\mathcal{D}_{u_{(t)}}(\Gamma_{(t)}+\delta_{(t)}u_{(t)}u_{(t)}^T,\{X_i\}_{i=1}^n)> \mathcal{D}_{u_{(t)}}(\Gamma_{(t)},\{X_i\}_{i=1}^n);$$
      \nl    \If {$\mathcal{D}_{\bar{\mathcal{U}}}(\Gamma_{(t)}+\delta_{(t)}u_{(t)}u_{(t)}^T,\{X_i\}_{i=1}^n)> \mathcal{D}_{\bar{\mathcal{U}}}(\Gamma_{(t)},\{X_i\}_{i=1}^n)$} {Set $\Gamma_{(t+1)}=\Gamma_{(t)}+\delta_{(t)}u_{(t)}u_{(t)}^T$;}\Else{Set $\Gamma_{(t+1)}=\Gamma_{(t)}$.}
     } 
\end{algorithm}

Note that Algorithm \ref{algo:depth} is an outline. Real implementations require some engineering modifications in the spirit of \cite{struyf2000high}.
Specifically, when there are multiple directions achieving the smallest matrix depth, the direction to move towards is randomly selected from all tied directions. In the implementation, we record the five latest iterations whenever a decision has been made among ties and save the unselected directions as back-up. If the depth does not improve after 20 iterations, the algorithm will trace back and select directions from the back-up list. This will effectively prevent the search from being stuck in the current direction. {For the below simulation studies for unstructured matrices with dimension $p=10$, we specify the direction set $\bar{\mathcal{U}}$ to be $10^5$ uniform draws from the unit sphere $S_{p-1}$, which turns out to work well in practice.} The number of iterations is picked to be sufficiently large so that the depth hardly improves anymore after that.

{To further justify the validity of our approximate algorithm which takes direction set $\bar{\mathcal{U}}$ rather than $\mathcal{U}=S^{p-1}$ for unstructured covariance matrices, we show in the following proposition that with appropriate choice of $\bar{\mathcal{U}}\subset\mathcal{U}$ of finite cardinality, the deepest matrix with respect to $\bar{\mathcal{U}}$  also enjoys the the minimax rate optimality as the one does in Theorem \ref{thm:matrix upper}. Let $\bar{\mathcal{U}}$ be a $(1/2)$-net of the unit sphere $S^{p-1}$. This means for any $u\in S^{p-1}$, there exists a $u'\in \bar{\mathcal{U}}$ such that $\norm{u-u'}\leq 1/2$. According to \cite{vershynin2010introduction}, such $\bar{\mathcal{U}}$ can be picked with cardinality bounded by $5^p$. Define $\hat{\bar{\Gamma}}=\arg \max_{\Gamma \succeq 0}\D_{\bar{\mathcal{U}}}(\Gamma,\{X_{i}\}_{i=1}^{n})$. Then the approximate estimator of $\Sigma$ is defined by $\hat{\bar{\Sigma}}=\hat{\bar{\Gamma}}/\beta$ as in (\ref{eq:estimator of Cov}). 	

\begin{proposition}\label{thm:matrix upper_app}
	Assume that $\epsilon <1/5$. Then, there exist absolute constants $C,C_1>0$, such that for any $\delta\in(0,1/2)$ satisfying $C_1{\frac{p+\log(1/\delta)}{n}}<1$, we have
	\begin{equation*}
	\opnorm{\hat{\bar{\Sigma}}-\Sigma}^2 \leq C\left(
	\left(\frac{p}{n}\vee\epsilon^2\right)+\frac{\log(1/\delta)}{n}\right) ,
	\end{equation*}%
	with $\mathbb{P}_{(\epsilon ,\Sigma ,Q)}$-probability at least $1-2\delta$ uniformly over all $Q$ and $\Sigma \in \mathcal{F}(M)$.
\end{proposition}
	
\begin{remark}
Similar minimax rate optimality results can be established for banded, bandable and sparse covariance matrix estimation as well as sparse PCA problem stated in Theorems \ref{thm:band}, \ref{thm:bandable}, \ref{thm:sparse} and \ref{thm:pca}. Specifically, with $\mathcal{V}_{[l,l+2k-1]}$ replaced by $\bar{\mathcal{V}}_{[l,l+2k-1]}$, a $(1/2)$-net of $\mathcal{V}_{[l,l+2k-1]}$ in the estimator defined in (\ref{eq:bandgamma}), Theorems \ref{thm:band}-\ref{thm:bandable} still hold, and with $\mathcal{V}_{S}$ replaced by $\bar{\mathcal{V}}_{S}$, a $(1/2)$-net of $\mathcal{V}_{S}$ in the estimator defined in (\ref{eq:sparsegamma}), Theorems \ref{thm:sparse} and \ref{thm:pca} still hold. Similar results can also be established for corresponding scatter matrix estimation problems considered in Section \ref{sec:elliptical}. We omit the details here. 
\end{remark}	
}

\subsection{Simulation Results}

Before presenting the simulation results, we introduce some other robust covariance/scatter matrix estimators that we will compare with. The definitions of these robust matrix estimator are all up to some scaling factor. First we introduce Tyler's M-estimator \citep{tyler1987distribution}, defined as a solution of
$$\sum_{i=1}^n\frac{X_iX_i^T}{X_i^T\Sigma^{-1}X_i}=c\Sigma,\quad\text{for some }c>0.$$
Note that it is a special case of Maronna's M-estimator \citep{maronna1976robust}. Properties of Tyler's M-estimator have been studied by \cite{dumbgen1998tyler,dumbgen2005breakdown,zhang2002some,zhang2016marvcenko}. The second one is the scaled Kendall's tau estimator. The Kendall's tau correlation coefficient \citep{kendall1938new} between the $j$th and $k$th variables is defined as
$$\hat{\tau}_{jk}=\frac{2}{n(n-1)}\sum_{i<i'}\text{sign}\left((X_i-X_{i'})_j(X_i-X_{i'})_k\right).$$
Then, $\hat{K}=(\hat{K}_{jk})$ with $\hat{K}_{jk}=\sin\left(\frac{\pi}{2}\hat{\tau}_{jk}\right)$ is an estimator of the correlation matrix \citep{kruskal1958ordinal,han2014scale}. To obtain an estimator for the covariance/scatter matrix, define a diagonal matrix $\hat{S}$ with diagonal entries $\hat{S}_{jj}=\textsf{Median}(\{X_{ij}^2\}_{i=1}^n)$. Then, the scaled Kendall's tau estimator for the scatter matrix is
$$\hat{S}^{1/2}\hat{K}\hat{S}^{1/2}.$$
Thirdly, we introduce the minimum volume ellipsoid estimator (MVE) by \cite{leroy1987robust}. It finds the ellipsoid covering at least $n/2$ points of $\{X_i\}_{i=1}^n$ and then use the shape of the ellipsoid as the covariance matrix estimator. Finally, a related estimator is called the minimum covariance determinant estimator (MCD) \citep{leroy1987robust}. It finds $n/2$ points of $\{X_i\}_{i=1}^n$ for which the determinant of the sample covariance is minimal. The sample covariance of the selected $n/2$ points is used as an estimator. Properties of MVE and MCD have been studied by \cite{davies1992asymptotics,croux1999influence,butler1993asymptotics}. All these four estimators can be computed in \textsc{R}. Tyler's M-estimator can be computed by the package \textsf{ICSNP} \citep{nordhausen2007icsnp}. Kendall's tau correlation coefficient is included in the basic \textsc{R} package \textsf{stats}. The MVE and MCD can be computed by the package \textsf{MASS} \citep{ripley2011mass}. For comparison of performances, we rescale all the estimators by some constant factors so that all of them are targeted at the population scatter matrix of a canonical elliptical distribution.

The experiments cover the following five scenarios. The first three scenarios consider a degenerate contamination distribution while the last two consider some non-degenerate contaminations, which are motivated by the scenarios considered in \cite{pena2001multivariate}. 
\paragraph{AR covariance with Gaussian distribution + degenerate contamination}
Consider a covariance matrix with an autoregressive structure. That is, $\Sigma=(\sigma_{ij})$ with $\sigma_{ij}=4\left(\frac{1}{2}\right)^{|i-j|}$. The data is generated by $(1-\epsilon)N(0,\Sigma)+\epsilon Q$ where $Q((10,...,10)^T)=1$. 
\paragraph{AR covariance with $t$-distribution + degenerate contamination}
For the same autoregressive covariance matrix $\Sigma=(\sigma_{ij})$ with $\sigma_{ij}=4\left(\frac{1}{2}\right)^{|i-j|}$, we generate the data from $(1-\epsilon)T_{\Sigma}+\epsilon Q$, where $T_{\Sigma}$ is a $t$ distribution with degrees of freedom $5$ and $Q((10,...,10)^T)=1$. The density function of $T_{\Sigma}$ is proportional to $(1+x^T\Sigma^{-1}x/5)^{-\frac{p+5}{2}}$. 
\paragraph{Wishart covariance with Gaussian distribution + degenerate contamination}
We first generate a matrix $S$ from the Wishart distribution $\mathcal{W}_p(10I_p,100)$, and then let $\Sigma=S/100$. The data is generated by $(1-\epsilon)N(0,\Sigma)+\epsilon Q$ where $Q((10,...,10)^T)=1$.
\paragraph{AR covariance with Gaussian distribution + uniform  contamination}
 $\Sigma=(\sigma_{ij})$ with $\sigma_{ij}=4\left(\frac{1}{2}\right)^{|i-j|}$. The data is generated by $(1-\epsilon)N(0,\Sigma)+\epsilon Q$ where each coordinate of $X\sim Q$ follows a uniform distribution $U[-5, 5]$ independently.
\paragraph{Wishart covariance with Gaussian distribution +  Gaussian  contamination}
 We first generate a matrix $S$ from the Wishart distribution $\mathcal{W}_p(10I_p,100)$, and then let $\Sigma=S/100$. The data is generated by $(1-\epsilon)N(0,\Sigma)+\epsilon Q$ where $Q$ is $N((10,...,10)^T, I)$.

\smallskip
\smallskip

The above experiments cover the cases $n\in\{200,300,500,1000,1500,2000\}$, $p=10$ and $\epsilon\in\{0,0.02,0.04,0.06,0.08,0.1\}$. For each configuration, we measure the error by the average operator norm over $100$ independent experiments. The results are plotted in Figures \ref{fig:1}-\ref{fig:5}.  Scenarios 3 and 5 also cover $\epsilon\in\{0.12,0.14,0.16,0.18,0.2\}$ for a more complete investigation of the behavior of MCD (see Figure \ref{fig:3}). Moreover, for the case $\epsilon=0$, we also demonstrate the statistical efficiency of these robust estimators in Tables \ref{tab:1}-\ref{tab:5} by comparing the errors with those of the MLEs.

\begin{figure}[tbp]
\centering
\caption{Simulation results for Scenario 1.}
\includegraphics[width=5in]{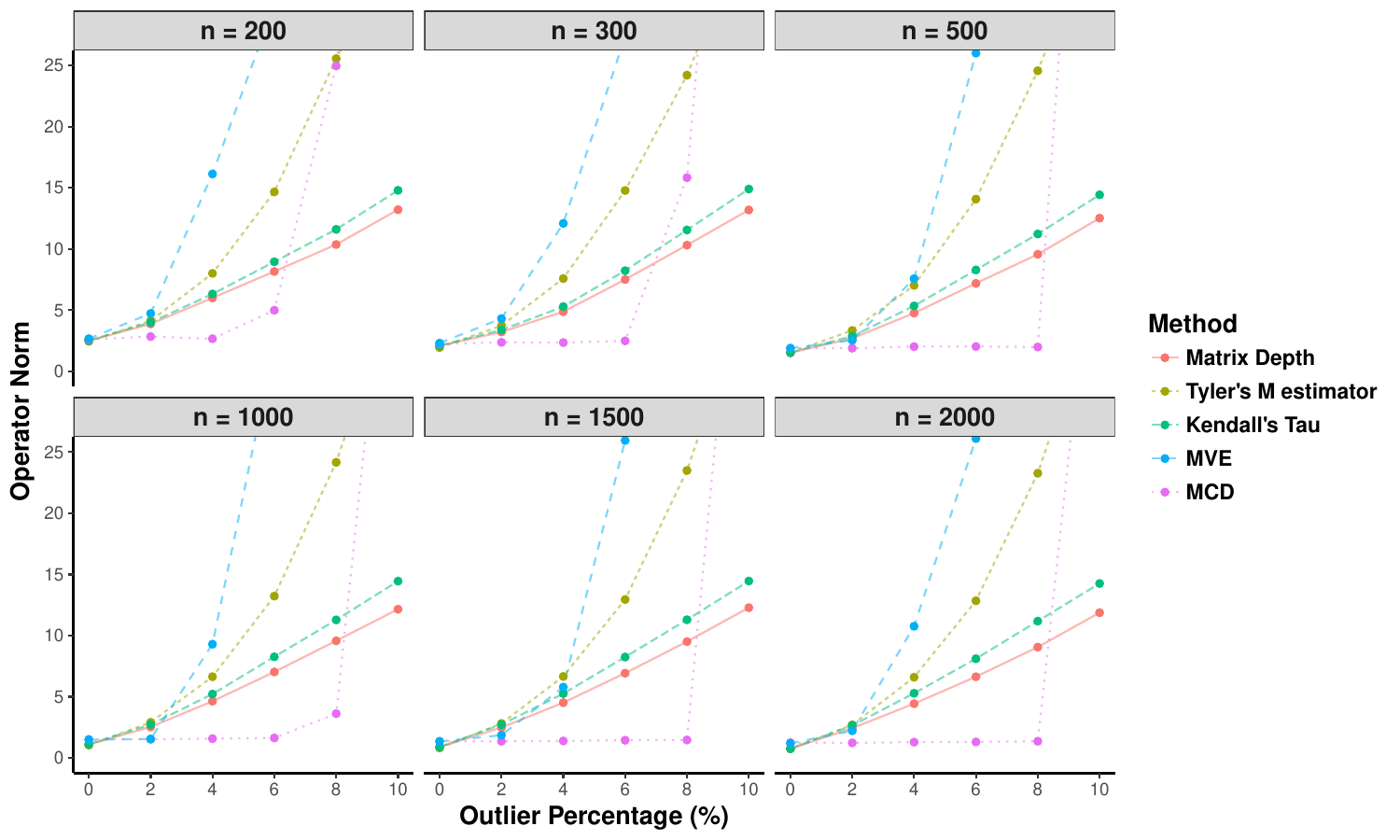}
\label{fig:1}
\end{figure}

\begin{table}[tbp]
\caption{Estimation errors for Scenario 1 when $\epsilon=0$.}
\label{tab:1}\centering
{\footnotesize \centering
\begin{tabular}{cccccccc} \hline
$n$ & $\epsilon$ & MLE & Matrix depth & Tyler's M  & Kendall's tau & MVE & MCD  \\ \hline
200 & 0  & 1.95 & 2.5 & 2.45 & 2.52 & 2.67 & 2.61 \\
300 & 0  & 1.68 & 2.08 & 1.94 & 2.11 & 2.31 & 2.25\\
500 & 0  & 1.38 & 1.51 & 1.53 & 1.53 & 1.89 & 1.89\\
1000 & 0 & 0.98& 1.1 & 1.05 & 1.12 & 1.5 & 1.51\\
1500 & 0 & 0.76 & 0.89 & 0.82 & 0.9 & 1.35 & 1.36\\
2000 & 0 & 0.68 & 0.75 & 0.75 & 0.76 & 1.19 & 1.25\\ \hline
\end{tabular}
}
\end{table}

\begin{figure}[tbp]
\centering
\caption{Simulation results for Scenario 2.}
\includegraphics[width=5in]{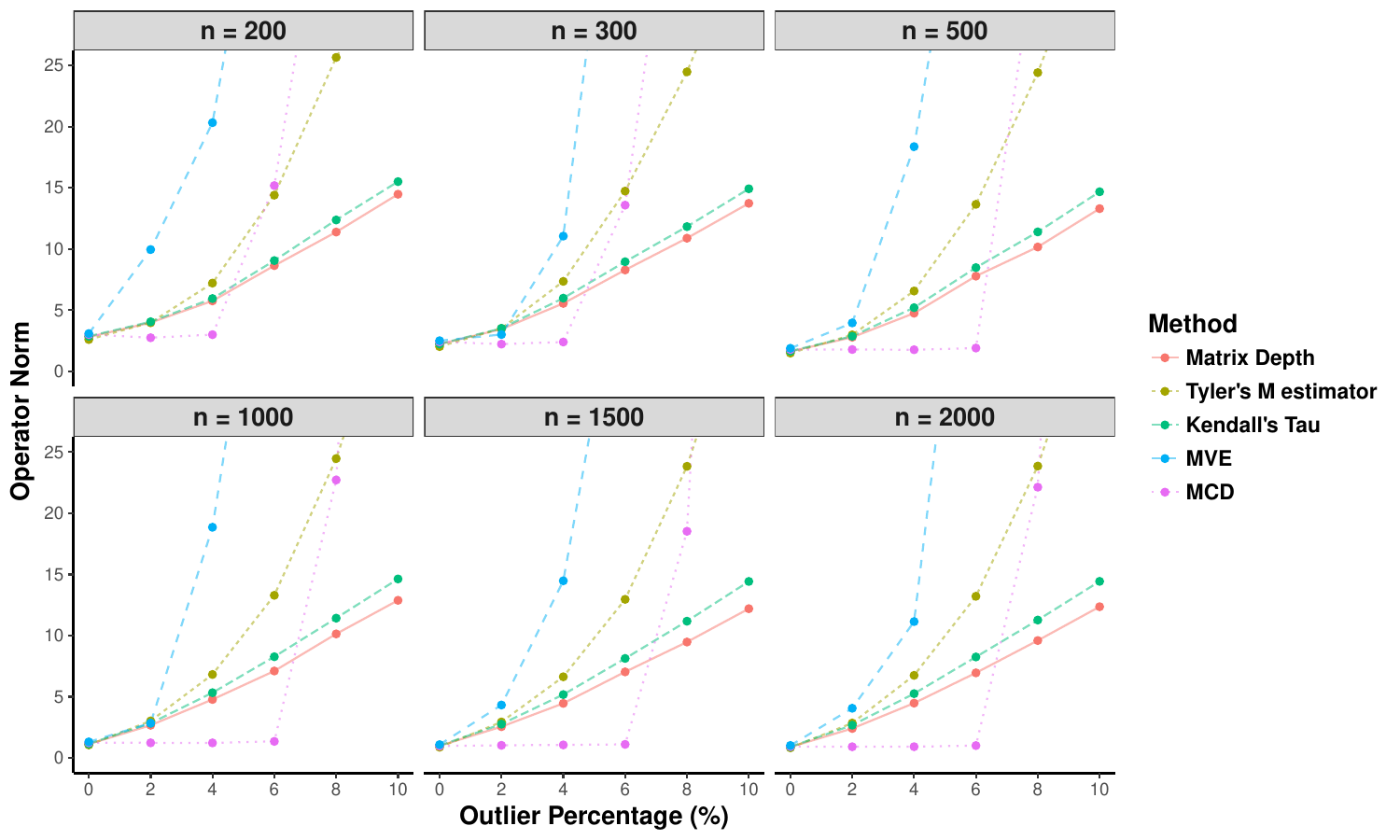}
\label{fig:2}
\end{figure}

\begin{table}[tbp]
\caption{Estimation errors for Scenario 2 when $\epsilon=0$.}
\label{tab:2}\centering
{\footnotesize \centering
\begin{tabular}{cccccccc} \hline
$n$ & $\epsilon$ & MLE & Matrix depth & Tyler's M  & Kendall's tau & MVE & MCD \\ \hline
200 & 0 & 2.51 & 2.8 & 2.6 & 2.87 & 3.09 & 3.01 \\
300 & 0 & 1.98 & 2.22 & 2.02 & 2.24 & 2.5 & 2.41 \\
500 & 0 & 1.45 & 1.62 & 1.49 & 1.65 & 1.88 & 1.78 \\
1000 & 0 & 1 & 1.13 & 1.06 & 1.15 & 1.31 & 1.25 \\
1500 & 0 & 0.84 & 0.99 & 0.88 & 1.01 & 1.09 & 0.98 \\
2000 & 0 & 0.77 & 0.89 & 0.83 & 0.91 & 1.02 & 0.94  \\ \hline
\end{tabular}
}
\end{table} 

\begin{figure}[tbp]
\centering
\caption{Simulation results for Scenario 3.}
\includegraphics[width=5in]{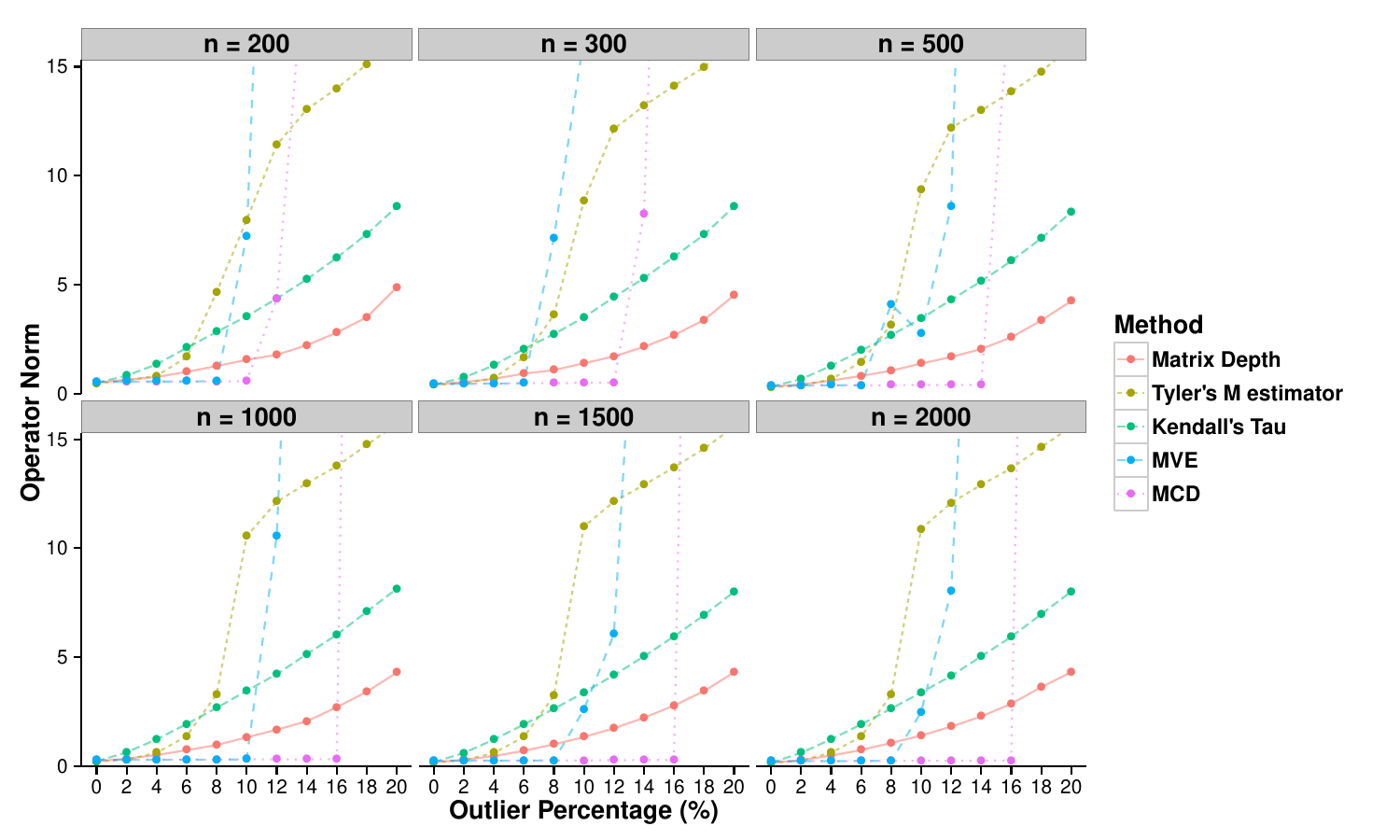}
\label{fig:3}
\end{figure}

\begin{table}[tbp]
\caption{Estimation errors for Scenario 3 when $\epsilon=0$.}
\label{tab:3}\centering
{\footnotesize \centering
\begin{tabular}{cccccccc} \hline
$n$ & $\epsilon$ & MLE & Matrix depth & Tyler's M  & Kendall's tau & MVE & MCD \\ \hline
200 & 0 & 0.4& 0.49 & 0.46 & 0.49 & 0.81 & 0.76\\
300 & 0 & 0.34  & 0.41 & 0.39 & 0.41 & 0.69 & 0.64\\
500 & 0 & 0.4 & 0.31 & 0.3 & 0.32 & 0.53 & 0.49\\
1000 & 0 & 0.19 & 0.22 & 0.21 & 0.22 & 0.37 & 0.35\\
1500 & 0& 0.15 & 0.17 & 0.17 & 0.18 & 0.3 & 0.28\\
2000 & 0 & 0.13 & 0.15 & 0.15 & 0.15 & 0.26 & 0.25 \\ \hline
\end{tabular}
}
\end{table}

\begin{figure}[tbp]
\centering
\caption{Simulation results for Scenario 4.}
\includegraphics[width=5in]{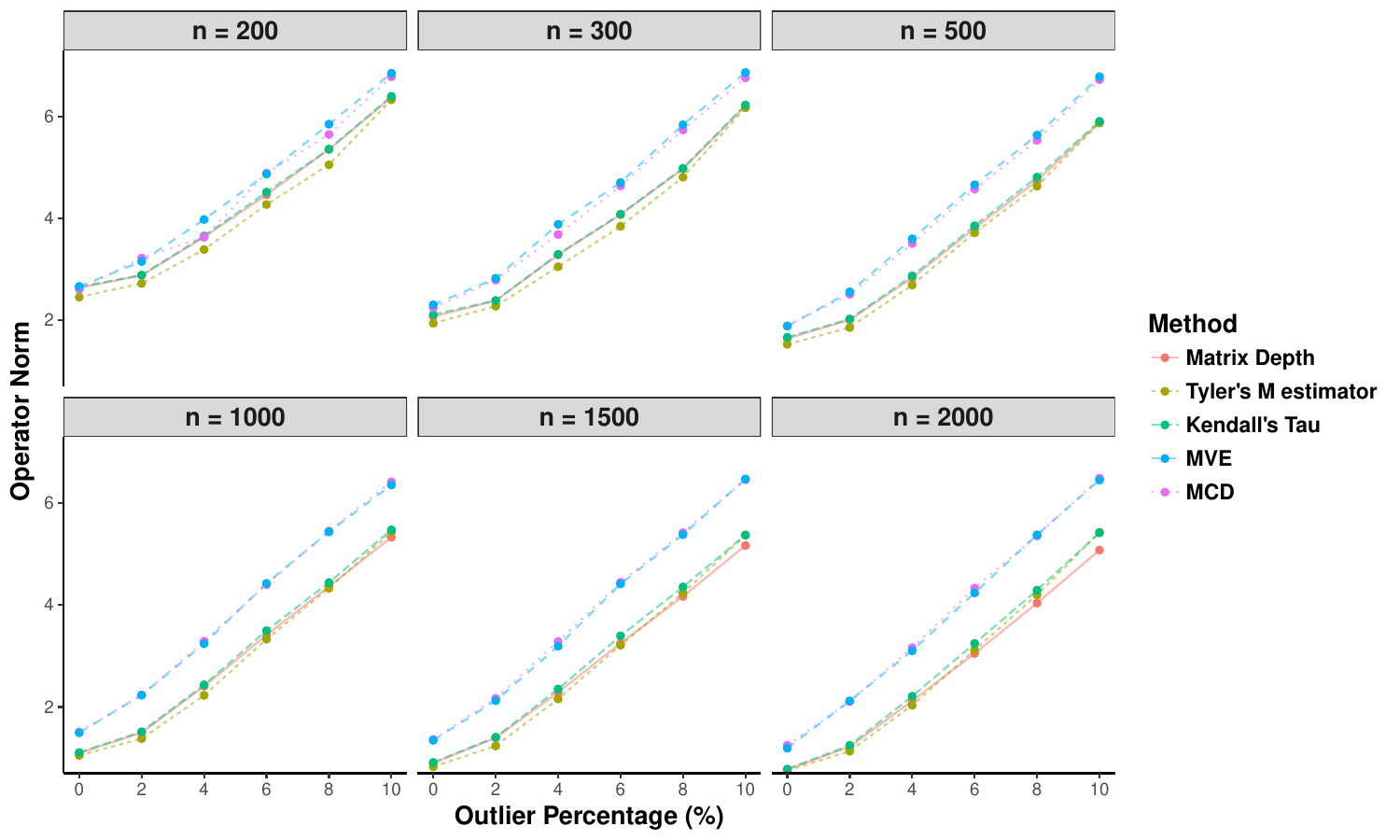}
\label{fig:4}
\end{figure}

\begin{table}[tbp]
\caption{Estimation errors for Scenario 4 when $\epsilon=0$.}
\label{tab:3}\centering
{\footnotesize \centering
\begin{tabular}{cccccccc} \hline
$n$ & $\epsilon$ & MLE & Matrix depth & Tyler's M  & Kendall's tau & MVE & MCD \\ \hline
200 & 0 & 1.95 & 2.64 & 2.45 & 2.66 & 2.67 & 2.61 \\
300 & 0 & 1.68 & 2.07 & 1.94 & 2.11 & 2.31 & 2.25 \\
500 & 0 & 1.38 & 1.65 & 1.53 & 1.67 & 1.89 & 1.89 \\
1000 & 0 & 0.98 & 1.1 & 1.05 & 1.11 & 1.5 & 1.51 \\
1500 & 0 & 0.76 & 0.9 & 0.83 & 0.91 & 1.35 & 1.36 \\
2000 & 0 & 0.68 & 0.77 & 0.75 & 0.78 & 1.19 & 1.25 \\ \hline
\end{tabular}
}
\end{table}

\begin{figure}[tbp]
\centering
\caption{Simulation results for Scenario 5.}
\includegraphics[width=5in]{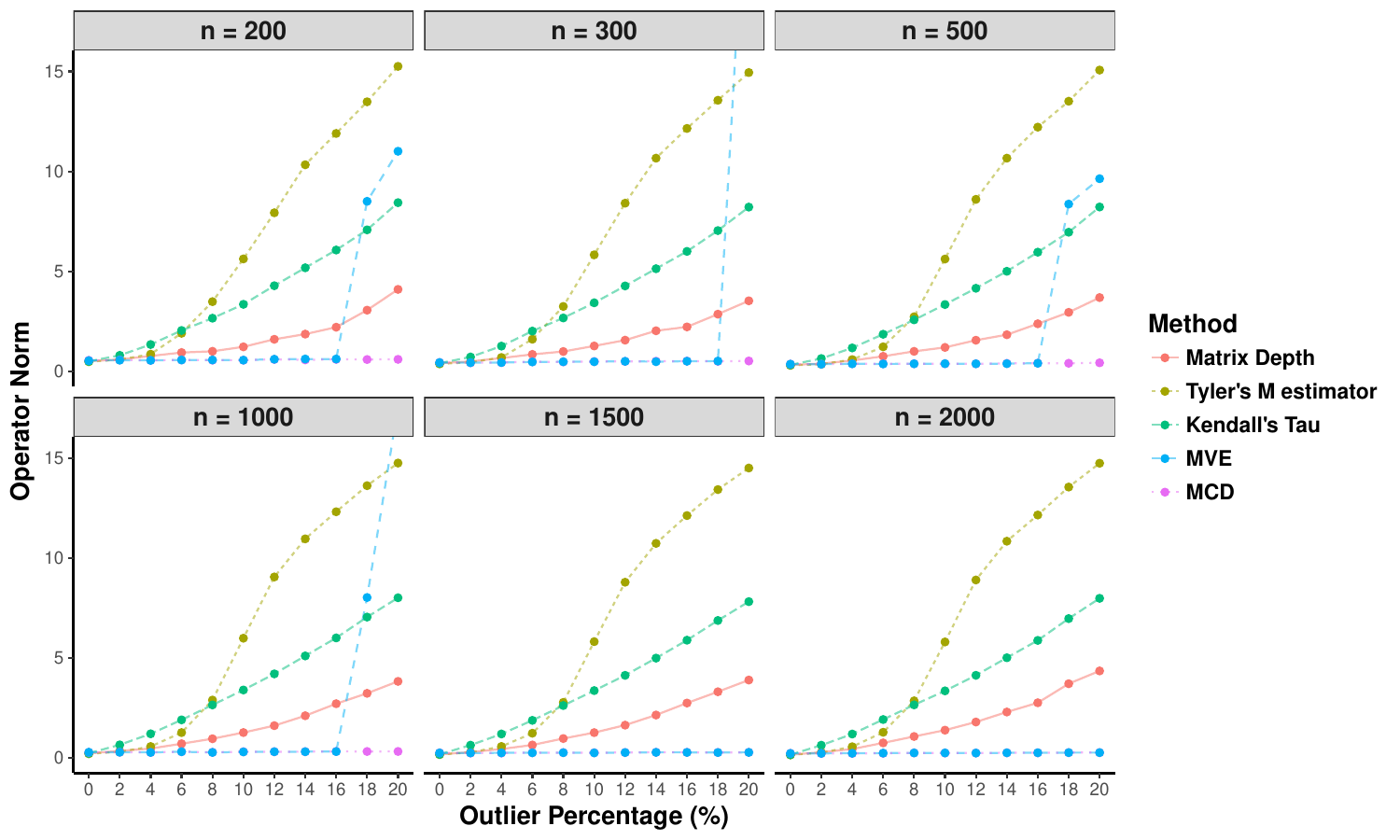}
\label{fig:5}
\end{figure}

\begin{table}[tbp]
\caption{Estimation errors for Scenario 5 when $\epsilon=0$.}
\label{tab:5}\centering
{\footnotesize \centering
\begin{tabular}{cccccccc} \hline
$n$ & $\epsilon$ & MLE & Matrix depth & Tyler's M  & Kendall's tau & MVE & MCD \\ \hline
200 & 0 & 0.4& 0.51 & 0.48 & 0.5 & 0.53 & 0.55 \\
300 & 0 & 0.34 & 0.4 & 0.37 & 0.4 & 0.44 & 0.44 \\
500 & 0 & 0.27 & 0.32 & 0.3 & 0.32 & 0.36 & 0.36 \\
1000 & 0 & 0.19 & 0.22 & 0.21 & 0.24 & 0.29 & 0.28 \\
1500 & 0 & 0.15 & 0.18 & 0.17 & 0.18 & 0.25 & 0.26 \\
2000 & 0 &  0.13 & 0.16 & 0.15 & 0.16 & 0.23 & 0.23 \\ \hline
\end{tabular}
}
\end{table}

Figures \ref{fig:1}-\ref{fig:5} show different behaviors for the five robust estimators under the $\epsilon$-contamination model. In general, matrix depth, Tyler's M and Kendall's tau show more or less similar patterns as $\epsilon$ increases, and MVE is similar to MCD. In all five scenarios, both MVE and MCD are not stable to the contamination proportion $\epsilon$. The errors of both estimators rise abruptly after a certain threshold $\epsilon$, even though MCD is very competitive when $\epsilon$ is small. On the other hand, the increase of errors of matrix depth, Tyler's M and Kendall's tau are more gradual. Among these three estimators, the matrix depth estimator demonstrates the best error behavior against contamination.

Compared to the first two scenario, the points where the errors of MVE and MCD rise abruptly appear later in Scenario 3. There are five cases where MVE is very stable until $\epsilon\geq 0.08$. The errors of MCD are stable for $\epsilon\leq 0.16$, but explode after that. For the other three estimators, the matrix depth estimator shows a more significant advantage over Tyler's M and Kendall's tau than the first two scenarios.

Scenario 4 demonstrates an interesting grouping of the five estimators under uniform contamination. The errors of MVE and MCD are almost identical. The errors of matrix depth, Tyler's M and Kendall's tau are also very similar, and are significantly better that those of MVE and MCD.

In contrast, Scenario 5 shows a different conclusion. The error of MVE is always very small. MCD shows a similar behavior but its error starts to explode when $\epsilon$ passes some threshold. Tyler's M is not favored by this scenario, and matrix depth still demonstrates its advantage over Kendall's tau.

The case $\epsilon=0$ is particularly interesting, where we can compare the statistical efficiency of the five estimators when there is no contamination. The results are reported in Tables \ref{tab:1}-\ref{tab:5}, benchmarked by the MLEs of Gaussian and $t$ distributions. All the five estimators show similar efficiency loss compared with the MLE. The errors of matrix depth, Tyler's M and Kendall's tau are better than those of MVE and MCD, especially when $n$ is large.

\begin{figure}[tbp]
\centering
\caption{Behaviors of errors when $\epsilon$ varies.}
\includegraphics[width=5in]{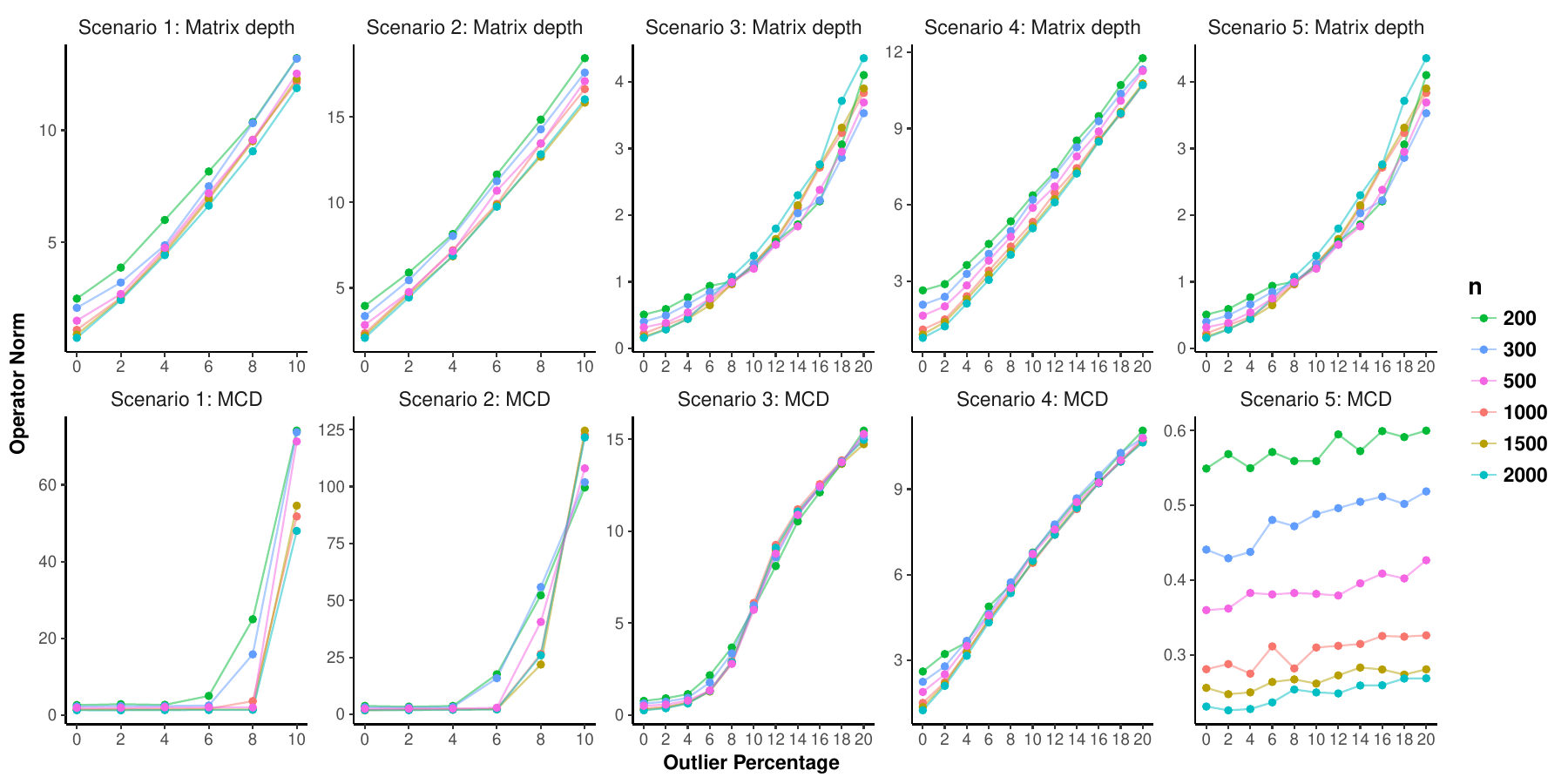}
\label{fig:6}
\end{figure}

Finally, we summarize the behaviors of the matrix depth estimator and the MCD in all the three scenarios in Figure \ref{fig:6}. These two estimators are representative of the five mentioned above. Note that more experiments for MCD are performed in Scenarios 3-5 to get a more complete view of the property of the estimator. According to {Theorem \ref{thm:matrix upper} and the more relevant Proposition \ref{thm:matrix upper_app}}, the convergence rate of the matrix depth estimator under the operator norm is $\sqrt{\frac{p}{n}}\vee\epsilon\asymp \sqrt{\frac{p}{n}}+\epsilon$. This rate is minimax optimal according to Theorem \ref{thm:l1}. Figure \ref{fig:6} clearly shows an approximately linear dependence on $\epsilon$, which is well predicted by our theory. On the other hand, the simulation results of MCD do not reflect a linear dependence on $\epsilon$, which is an evidence of sub-optimality.

\renewcommand{\theequation}{C.\arabic{equation}}
\setcounter{equation}{0}

\section{Additional Proofs in Section 2}

\begin{proof}[Proof of Theorem \ref{thm:location upper}]
Since our estimator is affine invariant, without loss of generality we consider the case where $\theta=0$.
By Lemma \ref{lem:ratio}, we decompose the data $\{X_i\}_{i=1}^n=\{Y_i\}_{i=1}^{n_1}\cup\{Z_i\}_{i=1}^{n_2}$. The following analysis is conditioning on the set of $(n_1,n_2)$ that satisfies (\ref{eq:ratio}). Define half space $H_{u,\eta }=\{y:u^{T}y\leq u^{T}\eta \}$.
Recall that the Tukey's depth of $\eta $ with respect to $%
P_{\theta }$ and its empirical counterpart are
\begin{eqnarray*}
\mathcal{D}(\eta ,P_{\theta }) &=&\inf_{u\in S^{p-1}}P_{\theta }(H_{u,\eta
})=\inf_{u\in S^{p-1}}P_{\theta }\{u^{T}Y\leq u^{T}\eta \}, \\
\mathcal{D}(\eta ,\{Y_{i}\}_{i=1}^{n_1}) &=&\inf_{u\in S^{p-1}}\mathbb{P}%
_{n_1}(H_{u,\eta })=\min_{u\in S^{p-1}}\frac{1}{n_1}\sum_{i=1}^{n_1}%
\mathbb{I}\{u^{T}Y_{i}\leq u^{T}\eta \},
\end{eqnarray*}%
where $\mathbb{P}_{n_1}$ denotes
the empirical distribution of $\{Y_{i}\}_{i=1}^{n_1}$. The class of set
functions $\{\mathbb{I}_{H_{u,\eta }}:u\in S^{p-1},\eta \in \mathbb{R}^{p}\}$ consists of
all half spaces in $\mathbb{R}^{p}$ and hence has VC dimension $p+1$ \cite{vapnik1971uniform}. Then following a similar analysis of Lemma \ref{lem:DKW} (or alternatively, see standard empirical processes theory, for instance, \cite[Theorems~5, 6]{bartlett2003rademacher}), we can obtain
that for any $\delta >0$, with probability at least $1-\delta $, we have
\begin{equation*}
\sup_{u,\eta }\left\vert P_{\theta }(H_{u,\eta })-\mathbb{P}%
_{n_1}(H_{u,\eta })\right\vert \leq \sqrt{\frac{1440e\pi}{1-e^{-1}}}\sqrt{\frac{p+1}{n_1}}+\sqrt{\frac{\log (1/\delta )}{2n_1}}.
\end{equation*}%
As an immediate consequence, we have with probability at least $1-\delta $,
\begin{equation}
\sup_{\eta }\left\vert \mathcal{D}(\eta ,P_{\theta })-\mathcal{D}(\eta
,\{Y_{i}\}_{i=1}^{n_1})\right\vert \leq \sqrt{\frac{1440e\pi}{1-e^{-1}}}\sqrt{\frac{p+1}{n_1}}+\sqrt{\frac{\log (1/\delta )}{2n_1}}.  \label{eqn:fact1}
\end{equation}
We lower bound $\mathcal{D}(\hat{\theta},P_{\theta})$ by
\begin{eqnarray}
&&\mathcal{D}(\hat{\theta},P_{\theta}) \nonumber \\
\label{eq:vc1}  &\geq& \mathcal{D}(\hat{\theta},\{Y_i\}_{i=1}^{n_1}) - \sqrt{\frac{1440e\pi}{1-e^{-1}}}\sqrt{\frac{p+1}{n_1}}-\sqrt{\frac{\log (1/\delta )}{2n_1}} \\
\label{eq:dfact1} &\geq& \frac{n}{n_1}\mathcal{D}(\hat{\theta},\{X_i\}_{i=1}^n)-\frac{n_2}{n_1}-\sqrt{\frac{1440e\pi}{1-e^{-1}}}\sqrt{\frac{p+1}{n_1}}-\sqrt{\frac{\log (1/\delta )}{2n_1}} \\
\label{eq:deftukey} &\geq& \frac{n}{n_1}\mathcal{D}(\theta,\{X_i\}_{i=1}^n)-\frac{n_2}{n_1}-\sqrt{\frac{1440e\pi}{1-e^{-1}}}\sqrt{\frac{p+1}{n_1}}-\sqrt{\frac{\log (1/\delta )}{2n_1}} \\
\label{eq:dfact2} &\geq& \mathcal{D}(\theta,\{Y_i\}_{i=1}^{n_1})-\frac{n_2}{n_1}-\sqrt{\frac{1440e\pi}{1-e^{-1}}}\sqrt{\frac{p+1}{n_1}}-\sqrt{\frac{\log (1/\delta )}{2n_1}} \\
\label{eq:vc2} &\geq& \mathcal{D}(\theta,P_{\theta})-\frac{n_2}{n_1}-2\sqrt{\frac{1440e\pi}{1-e^{-1}}}\sqrt{\frac{p+1}{n_1}}-\sqrt{\frac{2\log (1/\delta )}{n_1}} \\
\label{eq:half} &=& \frac{1}{2}-\frac{n_2}{n_1}-2\sqrt{\frac{1440e\pi}{1-e^{-1}}}\sqrt{\frac{p+1}{n_1}}-\sqrt{\frac{2\log (1/\delta )}{n_1}}.
\end{eqnarray}
The inequalities (\ref{eq:vc1}) and (\ref{eq:vc2}) are by (\ref{eqn:fact1}). The inequalities (\ref{eq:dfact1}) and (\ref{eq:dfact2}) are due to the property of depth function that
$$n_1\mathcal{D}(\eta,\{Y_i\}_{i=1}^{n_1})\geq n\mathcal{D}(\eta,\{X_i\}_{i=1}^{n})-n_2\geq n_1\mathcal{D}(\eta,\{Y_i\}_{i=1}^{n_1})-n_2,$$
for any $\eta\in\mathbb{R}^p$. The inequality (\ref{eq:deftukey}) is by the definition of $\hat{\theta}$. Finally, the equality (\ref{eq:half}) is because $P_{\theta}=N(\theta,I_p)=N(0,I_p)$, so that
\begin{equation}
\mathcal{D}(\eta ,P_{\theta})=1-\Phi (\left\Vert \eta \right\Vert ),
\label{eqn:fact2}
\end{equation}
for any $\eta\in\mathbb{R}^p$, where $\Phi(\cdot)$ is the cumulative distribution function of $N(0,1)$.
Combining (\ref{eq:half}) with (\ref{eqn:fact2}) and (\ref{eq:ratio}), we have
$$\Phi(\norm{\hat{\theta}})\leq\frac{1}{2}+\frac{\epsilon}{1-\epsilon}+40\sqrt{\frac{6e\pi}{1-e^{-1}}}\sqrt{\frac{p+1}{n}}+\frac{7}{2}\sqrt{\frac{\log (1/\delta )}{n}},$$
with probability at least $1-2\delta$.
Note that $\Phi(\norm{\hat{\theta}})-1/2=\int_0^{\norm{\hat{\theta}}}(2\pi)^{-1/2}e^{-t^2/2}dt$ and $e^{-t^2/2}$ is bounded away from $0$ in the neighborhood of $t=0$. Thus, under the assumption that $\epsilon<1/5$ and $\sqrt{p/n}+\sqrt{\log(1/\delta)/n}$ are sufficiently small, we obtain the bound $\norm{\hat{\theta}}\leq C\left(\sqrt{p/n}+\epsilon+\sqrt{\log(1/\delta)/n}\right)$ with probability at least $1-2\delta$, where $C$ is an absolute constant.
\end{proof}

\begin{proof}[Proof of Proposition \ref{pro:location cm}]
Given the conclusion of Lemma \ref{lem:qiantao}, it is sufficient to consider the case $\epsilon\leq 1/4$.
Recall the constant $c'>0$ in Lemma \ref{lem:ratio}.
When $\epsilon^2\leq (\frac{64\log(12)}{(c')^2}\vee 1) n^{-1}$, the classical minimax lower bound implies
$$\sup_{\theta,Q}\mathbb{P}_{(\epsilon,\theta,Q)}\left\{\norm{\hat{\theta}-\theta}^2\geq C'p/n\right\}>c,$$
for some small constants $0<c<1/3$ and $C'>0$, by considering the case $Q=P_{\theta}$.
Since $1/n\gtrsim (1/n)\vee\epsilon^2$, we have
$$\sup_{\theta,Q}\mathbb{P}_{(\epsilon,\theta,Q)}\left\{\norm{\hat{\theta}-\theta}^2\geq Cp(n^{-1}\vee\epsilon^2)\right\}>c,$$ where $C=C'(1\wedge\frac{(c')^2}{64\log(12)})$ Hence, it is sufficient to consider the case $\epsilon^2> (\frac{64\log(12)}{(c')^2}\vee 1) n^{-1}$.

Let us consider the distribution $\mathbb{P}=\mathbb{P}_{(\epsilon,0,Q)}$ with $Q\{Z=(1,1,...,1)^T\}=1$. Decompose the observations into $\{X_i\}_{i=1}^n=\{Y_i\}_{i=1}^{n_1}\cup\{Z_i\}_{i=1}^{n_2}$ as in Lemma \ref{lem:ratio}. Then for each $j\in[p]$, define the event
$$
E_j=\left\{\sup_{\eta}\left|\frac{1}{n_1}\sum_{i=1}^{n_1}\mathbb{I}\{Y_{ij}\leq \eta\}-\Phi(\eta)\right|\leq\sqrt{\frac{\log(12)}{n}}\right\},
$$
and
$$E=\left\{c'\epsilon<\frac{n_2}{n_1}<\frac{1}{2}\right\},$$
with $c'>0$ specified in Lemma \ref{lem:ratio}.
We claim that
\begin{equation}
\mathbb{P}\left\{\sum_{j=1}^p\mathbb{I}_{E_j}\geq c_1p, E\right\}\geq \frac{1}{3},\label{eq:2claim}
\end{equation}
for some small constant $c_1>0$.
We will establish the conclusion of Proposition \ref{pro:location cm} by assuming (\ref{eq:2claim}) holds.
The inequality (\ref{eq:2claim}) will be proved in the end. Let us first show that $E\cap E_j\subset \{\hat{\theta}_j^2\geq c_2\epsilon^2\}$, where the absolute constant $0<c_2<1$ depends on $c'$ only. For each $\eta<1$, we have
\begin{eqnarray*}
&&n\mathcal{D}(\eta,\{X_{ij}\}_{i=1}^n) \\
&=&\left(\sum_{i=1}^{n_1}\mathbb{I}\{Y_{ij}>\eta\}+\sum_{i=1}^{n_2}\mathbb{I}\{Z_{ij}>\eta\}\right)\wedge\left(\sum_{i=1}^{n_1}\mathbb{I}\{Y_{ij}\leq\eta\}+\sum_{i=1}^{n_2}\mathbb{I}\{Z_{ij}\leq\eta\}\right) \\
&=& \left(\sum_{i=1}^{n_1}\mathbb{I}\{Y_{ij}>\eta\}+n_2\right)\wedge\left(\sum_{i=1}^{n_1}\mathbb{I}\{Y_{ij}\leq\eta\}\right).
\end{eqnarray*}
Therefore, the event $E_j$ implies
\begin{equation}
\sup_{\eta< 1}\left|\frac{n}{n_1}\mathcal{D}(\eta,\{X_{ij}\}_{i=1}^n)-\left\{(1-\Phi(\eta)+n_2/n_1)\wedge\Phi(\eta)\right\}\right|\leq \sqrt{\frac{\log(12)}{n}}.\label{eq:zhaoge}
\end{equation}
When $\hat{\theta}_j< 1$, we have
\begin{eqnarray}
\nonumber \Phi(\hat{\theta}_j) &\geq& (1-\Phi(\hat{\theta}_j)+n_2/n_1)\wedge\Phi(\hat{\theta}_j) \\
\label{eq:haha1}&\geq& \frac{n}{n_1}\mathcal{D}(\hat{\theta}_j,\{X_{ij}\}_{i=1}^n)-\sqrt{\frac{\log(12)}{n}} \\
\label{eq:defcm}&\geq& \frac{n}{n_1}\mathcal{D}(\theta_j^*,\{X_{ij}\}_{i=1}^n)-\sqrt{\frac{\log(12)}{n}} \\
\label{eq:haha2}&\geq&  (1-\Phi(\theta_j^*)+n_2/n_1)\wedge\Phi(\theta_j^*)-2\sqrt{\frac{\log(12)}{n}} \\
\nonumber&=& \frac{1}{2}+\frac{n_2}{2n_1}-2\sqrt{\frac{\log(12)}{n}},
\end{eqnarray}
where $\theta^*$ is defined by the equation $(1-\Phi(\theta_j^*)+n_2/n_1)\wedge\Phi(\theta_j^*)=\frac{1}{2}+\frac{n_2}{2n_1}$, which is guaranteed to have a solution when $n_2/n_1<1/2$ under the event $E$.  The inequalities (\ref{eq:haha1}) and (\ref{eq:haha2}) are due to (\ref{eq:zhaoge}) and the inequality (\ref{eq:defcm}) is by the definition of $\hat{\theta}$. By  $\epsilon^2> \frac{64\log(12)}{(c')^2} n^{-1}$ and the event $E$, we have
$$\Phi(\hat{\theta}_j)\geq\frac{1}{2}+\frac{1}{4}c'\epsilon,$$
which implies $\hat{\theta}_j^2\geq c_2\epsilon^2$. When $\hat{\theta_j}<1$ does not hold, we have $\hat{\theta}_j^2\geq 1\geq \epsilon^2$. This establishes $E\cap E_j\subset\{\hat{\theta}_j^2\geq c_2\epsilon^2\}$, noting $c_2<1$. Hence, when $\epsilon^2> \frac{64\log(12)}{(c')^2} n^{-1}$, we can pick small enough constant $C>$ such that
\begin{eqnarray*}
&& \sup_{\theta,Q}\mathbb{P}_{(\epsilon,\theta,Q)}\left\{\norm{\hat{\theta}-\theta}^2\geq Cp(\epsilon^2\vee n^{-1})\right\} \\
&\geq& \sup_{\theta,Q}\mathbb{P}_{(\epsilon,\theta,Q)}\left\{\norm{\hat{\theta}-\theta}^2\geq c_1c_2p\epsilon^2\right\} \\
 &\geq& \mathbb{P}\left\{\sum_{j=1}^p\hat{\theta}_j^2\geq c_1c_2p\epsilon^2\right\} \geq \mathbb{P}\left\{\sum_{j=1}^p\mathbb{I}_{E_j}\geq c_1p, E\right\} \geq \frac{1}{3},
\end{eqnarray*}
where recall $\mathbb{P}=\mathbb{P}_{(\epsilon,0,Q)}$ in the above argument. This gives the desired conclusion, noting $c<1/3$.

Finally, let us prove (\ref{eq:2claim}) to close proof. First, we have $\mathbb{P}(E)\geq 2/5$ by Lemma \ref{lem:ratio} and the assumptions $\epsilon<1/4$ and $\epsilon^2>n^{-1}$. Moreover,
\begin{eqnarray*}
\mathbb{P}(E_j) &\geq& \mathbb{P}\left\{\sup_{\eta}\left|\frac{1}{n_1}\sum_{i=1}^{n_1}\mathbb{I}\{Y_{ij}\leq \eta\}-\Phi(\eta)\right|\leq \sqrt{\frac{\log(12)}{n}}\Big| E\right\}  \mathbb{P}(E) \\
&\geq& \frac{2}{5}\min_{m\geq n/2}\mathbb{P}\left\{\sup_{\eta}\left|\frac{1}{m}\sum_{i=1}^{m}\mathbb{I}\{Y_{ij}\leq \eta\}-\Phi(\eta)\right|\leq \sqrt{\frac{\log(12)}{n}}\right\} \\
&\geq& \frac{1}{3},
\end{eqnarray*}
where the last inequality follows from DKW inequality \cite{massart1990tight}. Therefore, by Hoeffding's inequality,
$$\mathbb{P}\left\{\sum_{j=1}^p\mathbb{I}_{E_j}\geq c_1p\right\}\geq 0.99,$$
for some small $c_1$ when $p$ is sufficiently large. Hence,
 $$\mathbb{P}\left\{\sum_{j=1}^p\mathbb{I}_{E_j}\geq c_1p, E\right\}\geq 1-\mathbb{P}\left\{\sum_{j=1}^p\mathbb{I}_{E_j}< c_1p\right\}-\mathbb{P}(E^c)>\frac{1}{3},$$
and the proof is complete.
\end{proof}

\renewcommand{\theequation}{D.\arabic{equation}}
\setcounter{equation}{0}

\section{Additional Proofs in Section 4}\label{sec:matrix elliptical}

Note that the proofs in Section \ref{sec:matrix} all depend on Theorem \ref%
{thm:general}. Similarly, all results in Section \ref{sec:elliptical} are
consequences of the following result analogous to Theorem \ref{thm:general}.

\begin{thm}
\label{thm:general elliptical} 
For some index subsets $S_1,\ldots,S_m\subset [p]$ with $\max_i|S_i|\leq s$, consider the estimator $\hat{\Gamma}$ defined in (\ref{eq:hatgamma}) with $\mathcal{U}=\cup_{i=1}^{m}\mathcal{V}_{S_i}$. Assume $\epsilon <\tau/3$, $(1+s)/n$ is sufficiently small and the distribution $P_{\Gamma}=EC_p(0,\Gamma,\eta)$ satisfies (\ref{eq:assell}).
Then for any $\delta\in(0,1/2)$ such that $n^{-1}\log (m/\delta )$ is sufficiently
small, we have
$$\sup_{u\in\mathcal{U}}\left|u^{T}(\hat{\Gamma}-\Gamma) u\right|\leq C\kappa^{1/2}\left(\epsilon+\sqrt{\frac{1+s+\log(m/\delta)}{n}}\right),$$
with $\mathbb{P}_{(\epsilon,\Gamma,Q)}$-probability at least $1-2\delta $
uniformly over all $Q$ and $\Gamma \in \mathcal{F}(M)\cap\mathcal{F}$,
where $C>0$ is some absolute constant.
\end{thm}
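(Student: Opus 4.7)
The plan is to mimic the proof of Theorem \ref{thm:general} step by step, replacing the Gaussian-specific ingredients with their elliptical counterparts: Proposition \ref{prop:truth} is replaced by Proposition \ref{prop:deepell}, and the Gaussian CDF $\Phi(\sqrt{\cdot})$ is replaced by the marginal distribution $G(t)$ defined in \eqref{eq:G(t)}. The assumption (\ref{eq:assell}) provides the quantitative control we need near $t=1$.

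First I would decompose $\{X_i\}_{i=1}^n = \{Y_i\}_{i=1}^{n_1} \cup \{Z_i\}_{i=1}^{n_2}$ and condition on the event from Lemma~\ref{lem:ratio} so that $n_2/n_1 \leq \epsilon/(1-\epsilon) + C\sqrt{\log(1/\delta)/n}$. Next, for fixed $u \in \mathcal{U}$, the map $\Gamma \mapsto \mathcal{D}_u(\Gamma,\cdot)$ depends on $\Gamma$ only through the scalar $u^T\Gamma u$, so after applying Lemma~\ref{lem:DKW} and a union bound over $\mathcal{U}$, we obtain
\begin{equation*}
\sup_{\Gamma\in\mathcal{F}}\bigl|\mathcal{D}_{\mathcal{U}}(\Gamma,P_{\Gamma})-\mathcal{D}_{\mathcal{U}}(\Gamma,\{Y_i\}_{i=1}^{n_1})\bigr| \leq \sqrt{\frac{\log|\mathcal{U}|+\log(2/\delta)}{2n_1}},
\end{equation*}
with probability at least $1-\delta$. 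I would then reproduce the chain of inequalities \eqref{eq:dU1}--\eqref{eq:prop}, where \eqref{eq:bf1} and \eqref{eq:bf2} still hold as properties of the empirical depth, the analogue of \eqref{eq:defGamma} comes from the definition of $\hat{\Gamma}$ as the maximizer, and the final equality \eqref{eq:prop} is now replaced by Proposition~\ref{prop:deepell}, which gives $\mathcal{D}_{\mathcal{U}}(\Gamma,P_{\Gamma})=\tfrac12$ for the scatter matrix in canonical representation. The upshot is that for each $u\in\mathcal{U}$,
\begin{equation*}
\mathcal{D}_u(\hat{\Gamma},P_{\Gamma}) \geq \tfrac12 - \tfrac{\epsilon}{1-\epsilon} - C_1\sqrt{\tfrac{\log|\mathcal{U}|+\log(2/\delta)}{n}},
\end{equation*}
with probability at least $1-2\delta$.

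The crucial new step — and where the elliptical structure enters — is converting this bound into a bound on $u^T(\hat{\Gamma}-\Gamma)u$. Since $\mathcal{D}_u(\hat{\Gamma},P_{\Gamma}) = \min\{G(u^T\hat{\Gamma}u/u^T\Gamma u),\, 1-G(u^T\hat{\Gamma}u/u^T\Gamma u)\}$ and $G(1)=\tfrac12$, the identity $\tfrac12-\min(x,1-x)=|x-\tfrac12|$ yields
\begin{equation*}
\sup_{u\in\mathcal{U}}\Bigl|G\!\left(\tfrac{u^T\hat{\Gamma}u}{u^T\Gamma u}\right)-G(1)\Bigr| \leq \tfrac{\epsilon}{1-\epsilon} + C_1\sqrt{\tfrac{\log|\mathcal{U}|+\log(2/\delta)}{n}}.
\end{equation*}
Under the assumption $\epsilon<\tau/3$ and $n^{-1}\log|\mathcal{U}|$ small, the right-hand side is strictly less than $\tau$, so by the first part of (\ref{eq:assell}) we conclude $|u^T\hat{\Gamma}u/u^T\Gamma u-1|<\alpha$ for every $u\in\mathcal{U}$. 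Then the local Lipschitz-lower-bound part of (\ref{eq:assell}) gives
\begin{equation*}
\Bigl|\tfrac{u^T\hat{\Gamma}u}{u^T\Gamma u}-1\Bigr| \leq \kappa^{1/2}\Bigl|G\!\left(\tfrac{u^T\hat{\Gamma}u}{u^T\Gamma u}\right)-G(1)\Bigr|,
\end{equation*}
and multiplying by $u^T\Gamma u \leq M$ produces the stated bound on $u^T(\hat{\Gamma}-\Gamma)u$.

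The main obstacle is the last translation: one must argue first that the operating point $u^T\hat{\Gamma}u/u^T\Gamma u$ lies in the neighborhood $(1-\alpha,1+\alpha)$ before the Lipschitz-lower-bound of $G$ at $1$ can be invoked. This is precisely what the two-part condition (\ref{eq:assell}) is designed for, and the threshold $\epsilon<\tau/3$ is dictated by the need to absorb $\epsilon/(1-\epsilon)$ together with the stochastic error term well below $\tau$. Everything else is essentially a rerun of the Gaussian argument.
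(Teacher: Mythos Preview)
Your proposal is correct and follows essentially the same approach as the paper's proof: rerun the chain \eqref{eq:dU1}--\eqref{eq:prop} with Proposition~\ref{prop:deepell} in place of Proposition~\ref{prop:truth} to obtain the analogue of \eqref{eq:verydeep}, then translate $\tfrac12-\mathcal{D}_u(\hat\Gamma,P_\Gamma)$ into $|G(u^T\hat\Gamma u/u^T\Gamma u)-G(1)|$ and apply the two-part assumption \eqref{eq:assell} exactly as you describe---first the $\tau$-threshold to force $|u^T\hat\Gamma u/u^T\Gamma u-1|<\alpha$, then the $\kappa^{-1/2}$ lower bound to extract the rate. The paper's proof is identical in structure and detail.
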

\begin{proof}
The proof is similar to that of Theorem \ref{thm:general}. We focus on
the difference and omit the overlapping content. In particular, the inequality (\ref{eq:verydeep}) can be derived by using the same argument under the elliptical distribution $P_{\Gamma}=EC_p(0,\Gamma,\eta)$. That is,
\begin{equation}
\mathcal{D}_u(\hat{\Gamma},P_{\Gamma}) \geq \frac{1}{2}-\frac{\epsilon}{1-\epsilon}-C_1\sqrt{\frac{1+s+\log(m/\delta)}{n}},\text{ for all }u\in\mathcal{U}, \label{eq:verydeep2}
\end{equation}
with probability at least $1-2\delta$. By Proposition \ref{prop:deepell} and the definition of $G(t)$ in (\ref{eq:G(t)}), we have
$$\frac{1}{2}-\mathcal{D}_u(\hat{\Gamma},P_{\Gamma})=\left|G(1)-G\left({\frac{u^T\hat{\Gamma}u}{u^T\Gamma u}}\right)\right|.$$
Combining this fact with (\ref{eq:verydeep2}), we have
$$\sup_{u\in\mathcal{U}}\left|G(1)-G\left({\frac{u^T\hat{\Gamma}u}{u^T\Gamma u}}\right)\right|\leq \frac{\epsilon}{1-\epsilon}+C_1\sqrt{\frac{1+s+\log(m/\delta)}{n}},$$
with probability at least $1-2\delta$. As long as $\epsilon<\tau/3$ and $\sqrt{\frac{1+s+\log(m/\delta)}{n}}$ is sufficiently small, we have $\frac{\epsilon}{1-\epsilon}+C_1\sqrt{\frac{1+s+\log(m/\delta)}{n}}<\tau$. By the assumption (\ref{eq:assell}), we must have $\left|1-{\frac{u^T\hat{\Gamma}u}{u^T\Gamma u}}\right|\leq\alpha$, which further implies
$$\sup_{u\in\mathcal{U}}\left|1-{\frac{u^T\hat{\Gamma}u}{u^T\Gamma u}}\right|\leq C_2\kappa^{1/2}\left(\epsilon+\sqrt{\frac{1+s+\log(m/\delta)}{n}}\right),$$
with probability at least $1-2\delta$. By the fact that $\Gamma\in\mathcal{F}(M)$, we have
$$\sup_{u\in\mathcal{U}}\left|u^T(\hat{\Gamma}-\Gamma)u\right|\leq C_3\kappa^{1/2}\left(\epsilon+\sqrt{\frac{1+s+\log(m/\delta)}{n}}\right),$$
with probability at least $1-2\delta$, which completes the proof.
\end{proof}

\begin{proof}[Proofs of Theorems \ref{thm:e1}-\ref{thm:eca}]
These results follow Theorem \ref{thm:general elliptical} and the arguments in the proofs of Theorem \ref{thm:matrix upper}, Theorem \ref{thm:band}, Theorem \ref{thm:bandable}, Theorem \ref{thm:sparse} and Theorem \ref{thm:pca}.
\end{proof}

\renewcommand{\theequation}{E.\arabic{equation}}
\setcounter{equation}{0}
\section{Additional Proofs in Sections 2, 3 and 5 on Lower Bounds}\label{sec:prooflower}

\begin{proof}[Proof of Theorem \ref{thm:lower}]
	When $\mathcal{M}(0)\geq\omega(\epsilon,\Theta)$, we have $\mathcal{M}(0)=\mathcal{M}(0)\vee\omega(\epsilon,\Theta)$. Thus,
	\begin{eqnarray*}
		&&\inf_{\hat{\theta}}\sup_{\theta\in\Theta}\sup_Q\mathbb{P}_{(\epsilon,\theta,Q)}\left\{L(\hat{\theta},\theta)\geq\mathcal{M}(0)\vee\omega(\epsilon,\Theta)\right\}\\
		&=&\inf_{\hat{\theta}}\sup_{\theta\in\Theta}\sup_Q\mathbb{P}_{(\epsilon,\theta,Q)}\left\{L(\hat{\theta},\theta)\geq\mathcal{M}(0)\right\}\geq c.
	\end{eqnarray*}
	It is sufficient to prove when $\mathcal{M}(0)<\omega(\epsilon,\Theta)$, we have
	\begin{equation}
	\inf_{\hat{\theta}}\sup_{\theta\in\Theta}\sup_Q\mathbb{P}_{(\epsilon,\theta,Q)}\left\{L(\hat{\theta},\theta)\geq\omega(\epsilon,\Theta)\right\}\geq c. \label{eq:lowertoprove}
	\end{equation}
	Let us pick $\theta_1,\theta_2$ that are solution of the following program
	$$\max_{\theta_1,\theta_2\in\Theta}L(\theta_1,\theta_2)\quad\text{s.t. }\TV(P_{\theta_1},P_{\theta_2})\leq\epsilon/(1-\epsilon).$$
	Then, there exists $\epsilon'\leq \epsilon$ such that
	$$L(\theta_1,\theta_2)=\omega(\epsilon,\Theta)\quad\text{and}\quad \TV(P_{\theta_1},P_{\theta_2})=\frac{\epsilon'}{1-\epsilon'}.$$
	For these $\theta_1,\theta_2\in\Theta$, let us define density functions
	$$p_{\theta_1}=\frac{dP_{\theta_1}}{d(P_{\theta_1}+P_{\theta_2})},\quad p_{\theta_2}=\frac{dP_{\theta_2}}{d(P_{\theta_1}+P_{\theta_2})}.$$
	Define $Q_1$ and $Q_2$ by their density functions
	$$\frac{dQ_1}{d(P_{\theta_1}+P_{\theta_2})}=\frac{(p_{\theta_2}-p_{\theta_1})\mathbb{I}\{p_{\theta_2}\geq p_{\theta_1}\}}{\TV(P_{\theta_1},P_{\theta_2})},$$
	$$\quad \frac{dQ_2}{d(P_{\theta_1}+P_{\theta_2})}=\frac{(p_{\theta_1}-p_{\theta_2})\mathbb{I}\{p_{\theta_1}\geq p_{\theta_2}\}}{\TV(P_{\theta_1},P_{\theta_2})}.$$
	Let us first check that $Q_1$ and $Q_2$ are probability measures. Since
	$$\int (p_{\theta_2}-p_{\theta_1})\mathbb{I}\{p_{\theta_2}\geq p_{\theta_1}\}=1-\int p_{\theta_1}\wedge p_{\theta_2}=\int(p_{\theta_1}-p_{\theta_2})\mathbb{I}\{p_{\theta_1}\geq p_{\theta_2}\},$$
	and
	$$\int (p_{\theta_2}-p_{\theta_1})\mathbb{I}\{p_{\theta_2}\geq p_{\theta_1}\}+\int(p_{\theta_1}-p_{\theta_2})\mathbb{I}\{p_{\theta_1}\geq p_{\theta_2}\}=2\TV(P_{\theta_1},P_{\theta_2}),$$
	we have
	$$\int (p_{\theta_2}-p_{\theta_1})\mathbb{I}\{p_{\theta_2}\geq p_{\theta_1}\}=\int(p_{\theta_1}-p_{\theta_2})\mathbb{I}\{p_{\theta_1}\geq p_{\theta_2}\}=\TV(P_{\theta_1},P_{\theta_2}),$$
	which implies
	$$\int \frac{dQ_1}{d(P_{\theta_1}+P_{\theta_2})} d(P_{\theta_1}+P_{\theta_2})=\int \frac{dQ_2}{d(P_{\theta_1}+P_{\theta_2})} d(P_{\theta_1}+P_{\theta_2})=1.$$
	Thus, $Q_1$ and $Q_2$ are well-defined probability measures. The least favorable pair in the parameter space is
	$$\mathbb{P}_1=(1-\epsilon')P_{\theta_1}+\epsilon' Q_1,\quad \mathbb{P}_2=(1-\epsilon')P_{\theta_2}+\epsilon' Q_2.$$
	By Lemma \ref{lem:qiantao},
	$$\mathbb{P}_1,\mathbb{P}_2\in\{(1-\epsilon')P_{\theta}+\epsilon'Q:\theta\in\Theta,Q\}\subset \{(1-\epsilon)P_{\theta}+\epsilon Q:\theta\in\Theta,Q\}.$$
	Direct calculation gives
	\begin{eqnarray*}
		\frac{d\mathbb{P}_1}{d(P_{\theta_1}+P_{\theta_2})} &=& (1-\epsilon')p_{\theta_1}+\epsilon'\frac{(p_{\theta_2}-p_{\theta_1})\mathbb{I}\{p_{\theta_2}\geq p_{\theta_1}\}}{\epsilon'/(1-\epsilon')} \\
		&=& (1-\epsilon')\left(p_{\theta_1}+(p_{\theta_2}-p_{\theta_1})\mathbb{I}\{p_{\theta_2}\geq p_{\theta_1}\}\right) \\
		&=& (1-\epsilon')\left(p_{\theta_2}+(p_{\theta_1}-p_{\theta_2})\mathbb{I}\{p_{\theta_1}\geq p_{\theta_2}\}\right) \\
		&=& (1-\epsilon')p_{\theta_2}+\epsilon'\frac{(p_{\theta_1}-p_{\theta_2})\mathbb{I}\{p_{\theta_1}\geq p_{\theta_2}\}}{\epsilon'/(1-\epsilon')} \\
		&=& \frac{d\mathbb{P}_2}{d(P_{\theta_1}+P_{\theta_2})}.
	\end{eqnarray*}
	Hence, $\mathbb{P}_1=\mathbb{P}_2$, which implies the corresponding $\theta_1$ and $\theta_2$ are not identifiable from the model, and their distance under $L(\theta_1,\theta_2)$ can be as far as $\omega(\epsilon,\Theta)$. A standard application of Le Cam's two point testing method \citep{yu1997assouad} leads to (\ref{eq:lowertoprove}) and the proof is complete.
\end{proof}

\begin{proof}[Proof of Theorem \ref{thm:location lower}]
	In this case $L(\theta_1,\theta_2)=\norm{\theta_1-\theta_2}^2$. Therefore,
	\begin{eqnarray*}
		&&\omega(\epsilon,\Theta) \\
		&=& \sup\left\{\norm{\theta_1-\theta_2}^2: \TV(N(\theta_1,I_p),N(\theta_2,I_p))=\epsilon/(1-\epsilon); \theta_1,\theta_2\in\Theta\right\} \\
		&\geq& \sup\left\{\norm{\theta_1-\theta_2}^2: \norm{\theta_1-\theta_2}^2/4\leq \epsilon^2; \theta_1,\theta_2\in\Theta\right\} \\
		&=& 4\epsilon^2.
	\end{eqnarray*}
	Moreover the rate $\mathcal{M}(0)\asymp \frac{p}{n}$ is classical (see, for example, \cite{ma2013volume}). Hence, we have $\mathcal{M}(\epsilon)\asymp (p/n)\vee\epsilon^2$ by Theorem \ref{thm:lower}.
\end{proof}

\begin{proof}[Proofs of Theorem \ref{thm:l1}, Theorem \ref{thm:l2} and Theorem \ref{thm:l3}]
	Without loss of generality, we assume $M>1+\epsilon$. Consider $\Sigma_1=I_p$ and $\Sigma_2=I_p+\epsilon E_{11}$, where $E_{11}$ is a matrix with $1$ in the $(1,1)$-entry and $0$ elsewhere. Note that both $\Sigma_1$ and $\Sigma_2$ are in all matrix classes considered in Section \ref{sec:matrix}. Then,
	$$\TV(N(0,\Sigma_1),N(0,\Sigma_2))^2\leq \frac{1}{2}D(N(0,\Sigma_1)||N(0,\Sigma_2))\leq \frac{1}{8}\fnorm{\Sigma_1-\Sigma_2}^2=\frac{\epsilon^2}{8},$$
	and $L(\Sigma_1,\Sigma_2)=\opnorm{\Sigma_1-\Sigma_2}^2=\epsilon^2$. Therefore, $\omega(\epsilon,\Theta)\geq\epsilon^2$. For the space $\mathcal{F}(M)$, we have $\mathcal{M}(0)\asymp\frac{p}{n}$ according to Theorem 6 of \cite{ma2013volume}. For $\mathcal{F}_k(M)$ and $\mathcal{F}_{\alpha}(M,M_0,M_{\min})$, we have $\mathcal{M}(0)\asymp \frac{k+\log p}{n}$ and $\mathcal{M}(0)\asymp n^{-\frac{2\alpha}{2\alpha+1}}+\frac{\log p}{n}$, respectively, which are implied by Theorem 3 of \cite{cai2010optimal}. Finally, for $\mathcal{F}_s(M)$, $\mathcal{M}(0)\asymp \frac{s\log(ep/s)}{n}$ by Theorem 4 of \cite{cai13}. By Theorem \ref{thm:lower}, we obtain the desired lower bound.
\end{proof}

\begin{proof}[Proof of Theorem \ref{thm:l4}]
	Consider $\Sigma_1=\lambda\theta_1\theta_1^T+I_p$ and $\Sigma_2=\lambda\theta_2\theta_2^T+I_p$. It is obvious that $\Sigma_1,\Sigma_2\in\mathcal{F}_{s,\lambda}(M,1)$. For $r\geq 2$, we may consider some $V\in O(p,r-1)$ with $\supp(V)\subset \{3,4,...,p\}$. Then, let $\Sigma_1=\lambda\theta_1\theta_1^T+\lambda VV^T+I_p$ and $\Sigma_2=\lambda\theta_2\theta_2^T+\lambda VV^T+I_p$. For both cases, we have $\Sigma_1,\Sigma_2\in\mathcal{F}_{s,\lambda}(M,r)$. Since
	$\TV(N(0,\Sigma_1),N(0,\Sigma_2))^2\leq \frac{\lambda^2}{8}\fnorm{\theta_1\theta_1^T-\theta_2\theta_2^T}^2$ and $L(\Sigma_1,\Sigma_2)=\fnorm{\theta_1\theta_1^T-\theta_2\theta_2^T}^2$, we have
	$$\omega(\epsilon,\Theta)\geq\sup\left\{\fnorm{\theta_1\theta_1^T-\theta_2\theta_2^T}^2:  \frac{\lambda^2}{8}\fnorm{\theta_1\theta_1^T-\theta_2\theta_2^T}^2\leq \epsilon^2\right\}\gtrsim \frac{\epsilon^2}{\lambda^2}\wedge c,$$
	for some constant $c>0$. The reason we need $c$ in the above inequality is because $\fnorm{\theta_1\theta_1^T-\theta_2\theta_2^T}^2$ is a bounded loss. By Theorem 3 of \cite{cai2013sparse}, $\mathcal{M}(0)\gtrsim \frac{s\log(ep/s)}{n\lambda^2}$. By Theorem \ref{thm:lower}, we obtain the desired lower bound.
\end{proof}

\renewcommand{\theequation}{F.\arabic{equation}}
\setcounter{equation}{0}
\section{Additional Proofs in Section 6}

\begin{proof}[Proof of Theorem \ref{thm:niubi}]
Let us shorthand $\mathbb{P}_{(\epsilon,\Theta,Q)}$, $\{X_i\}_{i=1}^n$ and $\{Y_i\}_{i=1}^{n_1}$ by $\mathbb{P}$, $X$ and $Y$. For any estimator $\hat{\theta}(\cdot)$, we have
\begin{eqnarray}
\nonumber && \mathbb{P}\left\{L(\hat{\theta}(X),\theta)>\frac{1}{2}A^{-1}\delta\right\} \\
\label{eq:81}&\geq& \mathbb{P}\left\{L(\hat{\theta}(X),\hat{\theta}(Y))>\delta, L(\hat{\theta}(Y),\theta)\leq \frac{1}{2}A^{-1}\delta\right\} \\
\label{eq:82}&\geq& \mathbb{P}\left\{L(\hat{\theta}(X),\hat{\theta}(Y))>\delta\right\}-\mathbb{P}\left\{L(\hat{\theta}(Y),\theta)>\frac{1}{2}A^{-1}\delta\right\},
\end{eqnarray}
where the inequality (\ref{eq:81}) is due to (\ref{eq:triangle}) and the inequality (\ref{eq:82}) is union bound.
The identity $\epsilon=\epsilon(\hat{\theta},\Theta,\delta)$ means that $$\sup_{\theta\in\Theta}\sup_Q\mathbb{P}\left\{L(\hat{\theta}(X),\hat{\theta}(Y))>\delta\right\}>c.$$ 
Hence, by (\ref{eq:82}), we have
\begin{equation}
\sup_{\theta\in\Theta}\sup_Q\mathbb{P}\left\{L(\hat{\theta}(X),\theta)>\frac{1}{2}A^{-1}\delta\right\}+\sup_{\theta\in\Theta}\sup_Q\mathbb{P}\left\{L(\hat{\theta}(Y),\theta)>\frac{1}{2}A^{-1}\delta\right\}\geq c.\label{eq:diao}
\end{equation}
Let us upper bound $\sup_{\theta\in\Theta}\sup_Q\mathbb{P}\left\{L(\hat{\theta}(Y),\theta)>\frac{1}{2}A^{-1}\delta\right\}$ by
\begin{eqnarray}
\nonumber&& \sup_{\theta\in\Theta}\sup_Q\mathbb{P}\left\{L(\hat{\theta}(Y),\theta)>\frac{1}{2}A^{-1}\delta\right\} \\
\label{eq:83}&=& \sup_{\theta\in\Theta}\mathbb{E}P_{\theta}^{n_1}\left\{L(\hat{\theta},\theta)>\frac{1}{2}A^{-1}\delta\right\} \\
\nonumber&\leq& \sup_{\theta\in\Theta}\max_{n_1\geq n/3}P_{\theta}^{n_1}\left\{L(\hat{\theta},\theta)>\frac{1}{2}A^{-1}\delta\right\} + \mathbb{P}\left\{n_1<\frac{n}{3}\right\} \\
\label{eq:84}&\leq& \max_{n_1\geq n/3}\sup_{\theta\in\Theta}P_{\theta}^{n_1}\left\{L(\hat{\theta},\theta)>\frac{1}{2}A^{-1}\delta\right\} + \exp\left(-\frac{n}{18}\right) \\
\label{eq:85}&\leq& \sup_{\theta\in\Theta}P_{\theta}^n\left\{L(\hat{\theta},\theta)>\frac{1}{2}c_1A^{-1}\delta\right\}+ \exp\left(-\frac{n}{18}\right) \\
\label{eq:86}&\leq& \sup_{\theta\in\Theta}\sup_{Q}\mathbb{P}\left\{L(\hat{\theta}(X),\theta)>\frac{1}{2}c_1A^{-1}\delta\right\}+ \exp\left(-\frac{n}{18}\right).
\end{eqnarray}
In the equality (\ref{eq:83}), the expectation operator $\mathbb{E}$ is associated with the probability $n_1\sim\text{Binomial}(n,1-\epsilon)$. The inequality (\ref{eq:84}) is by Hoeffding's inequality and the assumption $\epsilon<1/2$. The inequality (\ref{eq:85}) is by the assumption (\ref{eq:notcrazy}). Finally, the inequality (\ref{eq:86}) is due to the relation $\{P_{\theta}:\theta\in\Theta\}\subset\{(1-\epsilon)P_{\theta}+\epsilon Q:\theta\in\Theta,Q\}$. Combining the above argument with (\ref{eq:diao}) and the inequality
\begin{eqnarray*}
&&\sup_{\theta\in\Theta}\sup_Q\mathbb{P}\left\{L(\hat{\theta}(X),\theta)>\frac{1}{2}c_1A^{-1}\delta\right\}\\
&\geq&\sup_{\theta\in\Theta}\sup_Q\mathbb{P}\left\{L(\hat{\theta}(X),\theta)> \frac{1}{2}A^{-1}\delta\right\},
\end{eqnarray*} 
we get
$$2 \sup_{\theta\in\Theta}\sup_{Q}\mathbb{P}\left\{L(\hat{\theta}(X),\theta)>\frac{1}{2}c_1A^{-1}\delta\right\}\geq c-\exp\left(-\frac{n}{18}\right),$$
which leads to the desired conclusion for a sufficiently large $n$.
\end{proof}

\renewcommand{\theequation}{G.\arabic{equation}}
\setcounter{equation}{0}

\section{Proofs of Propositions and Lemmas}
\begin{proof}[Proof of Proposition \ref{prop:truth}]
For any $u\in S^{p-1}$, we have
$\mathcal{D}_u(\beta\Sigma,P_{\Sigma})=P_{\Sigma}\left(|u^TX|^2\leq \beta u^T\Sigma u\right)\wedge P_{\Sigma}\left(|u^TX|^2 \geq \beta u^T\Sigma u\right)$, which equals
$\left(2\Phi(\sqrt{\beta})-1\right)\wedge\left(2-2\Phi(\sqrt{\beta})\right)$ or one because that either $u^TX/\sqrt{u^T\Sigma u}\sim N(0,1)$ or $\mathbb{P}\{u^TX=0\}=1$ with $u^T\Sigma u=0$. Since $\Phi(\sqrt{\beta})=3/4$, we have $\left(2\Phi(\sqrt{\beta})-1\right)\wedge\left(2-2\Phi(\sqrt{\beta})\right)=1/2$. Note that we assume $\Sigma$ is not a zero matrix throughout the paper, there is at least one $u\in S^{p-1}$ such that $u^TX/\sqrt{u^T\Sigma u}\sim N(0,1)$. Thus we finally obtain that $\mathcal{D}_{\mathcal{U}}(\beta\Sigma,P_{\Sigma})=\inf_{u\in\mathcal{U}}\mathcal{D}_u(\beta\Sigma,P_{\Sigma})=1/2$.
\end{proof}

\begin{proof}[Proof of Proposition \ref{prop:unique}]
The existence is guaranteed by the continuity.
Suppose there are two canonical representation, then equivalently, $G(t)=\frac{1}{2}$ will have another solution besides $t=1$. However, $G(t)=G(1)$ for some $t\neq 1$ contradicts (\ref{eq:assell}). This completes the proof.
\end{proof}

\begin{proof}[Proof of Proposition \ref{prop:deepell}]
For any $u\in S^{p-1}$ such that $u^T\Gamma u \neq 0$, we have $\mathcal{D}_u(\Gamma,P_{\Gamma})=P_{\Gamma}\left(|u^TX|^2\leq  u^T\Gamma u\right)\wedge P_{\Gamma}\left(|u^TX|^2\geq  u^T\Gamma u\right)$, which equals $G(1)\wedge (1-G(1^-))$ by the definition of $G(t)$. According to the definition of canonical representation, $G(1)=1/2$ and thus the proof is complete.
\end{proof}

\begin{proof}[Proof of Proposition \ref{prop:example}]
For $X\sim EC_p(0,\Gamma,\eta)$, its characteristic function must be in the form $\mathbb{E}\exp(\sqrt{-1}t^TX)=\phi(t^T\Gamma t)$ with some univariate function $\phi(\cdot)$ called characteristic generator. The characteristic generator $\phi(\cdot)$ is completely determined by the distribution of $\eta$ \citep{fang1990symmetric}. For multivariate Gaussian, $\phi(v)=\exp(-v/(2\beta))$. For multivariate Laplace, $\phi(v)=1/(1+\beta v/2)$. For multivariate $t$, $\phi(v)=\frac{(\beta vd)^{d/4}}{2^{d/2-1}\Gamma (d/2)}K_{d/2}(\sqrt{\beta vd})$, where $K_m(\cdot)$ is the modified Bessel of the second kind. Note that for all the examples considered, $\phi(\cdot)$ does not depend on $\Gamma$ or the dimension $p$, which means the distribution of $\eta$ does not depend on $p$, either. Since the distribution of $\eta$ fully determines the function $G(\cdot)$ defined by (\ref{eq:G(t)}), the equation $G(1)=1/2$ is satisfied for some constant $\beta$ independent of $p$. Finally, the condition (\ref{eq:assell}) is satisfied for some constants $\tau,\alpha,\kappa$ independent of $p$ because of the smoothness of the derivative of $G(\cdot)$.
\end{proof}

\begin{proof}[Proof of Proposition \ref{prop:exampleU}]
Let $\phi(\cdot)$ be the characteristic generator of $X_1$. Note that we have $\mathbb{E}\exp(\sqrt{-1}t^T(X_1-X_2))=[\phi(t^T\Gamma t)]^2$. Since the continuity of $\phi(\cdot)$ implies the continuity of $[\phi(\cdot)]^2$, the same argument in the proof of Proposition \ref{prop:example} leads to the desired conclusion.
\end{proof}

\begin{proof}[Proof of Lemma \ref{lem:ratio}]
Note that $n_2\sim\text{Binomial}(n,\epsilon)$. By Hoeffding's inequality, $\mathbb{P}(n_2>n\epsilon+t)\leq\exp(-2t^2/n)$ for any $t>0$. Thus, $n_2\leq n\epsilon+\sqrt{\frac{n}{2}\log(1/\delta)}$ with probability at least $1-\delta$. This implies $n_1\geq n(1-\epsilon)-\sqrt{\frac{n}{2}\log(1/\delta)}$ and therefore,
$$\frac{n_2}{n_1}\leq \frac{\epsilon+\sqrt{\frac{1}{2n}\log(1/\delta)}}{1-\epsilon-\sqrt{\frac{1}{2n}\log(1/\delta)}},$$
with probability at least $1-\delta$. Under the assumption that $\epsilon<1/5$ and $\sqrt{\frac{1}{2n}\log(1/\delta)}<1/5$, we have $n_2/n_1\leq \epsilon/(1-\epsilon)+\frac{25}{12}\sqrt{\frac{1}{2n}\log(1/\delta)}$ with probability at least $1-\delta$. This proves (\ref{eq:ratio}). A symmetric argument leads to
$$\frac{n_2}{n_1}\geq \frac{\epsilon-\sqrt{\frac{1}{2n}\log(1/\delta)}}{1-\epsilon+\sqrt{\frac{1}{2n}\log(1/\delta)}},$$
with probability at least $1-\delta$. For $\delta=1/2$ and $\epsilon^2>1/n$, we have $n_2/n_1\geq c\epsilon$ with probability at least $1/2$. Thus, the proof is complete.
\end{proof}

\begin{proof}[Proof of Lemma \ref{lem:qiantao}]
For any $\theta\in\Theta$ and $Q$, define
$$Q'=\frac{\epsilon_2-\epsilon_1}{\epsilon_2}Q+\frac{\epsilon_1}{\epsilon_2}P_{\theta}.$$
It is easy to see that $Q'$ is a probability measure, and it satisfies
$$(1-\epsilon_1)P_{\theta}+\epsilon_1Q=(1-\epsilon_2)P_{\theta}+\epsilon_2Q'.$$
This immediately gives the desired conclusion.
\end{proof}

\begin{proof}[Proof of Lemma \ref{lem:DKW}]

The proof follows from standard empirical processes properties \cite{dudley1978central,pollard2012convergence} for VC classes \cite{vapnik1971uniform}. In order to bound the main term $\sup_{u\in \mathcal{V}_{S},t\in \mathbb{R}%
}\left\vert \mathbb{P}(IH_{u,t})-\mathbb{P}_{n}(IH_{u,t})\right\vert $, we bound $$%
\sup_{u\in \mathcal{V}_{S},t\in \mathbb{R}}\left\vert \mathbb{P}(IH_{u,t})-\mathbb{P}%
_{n}(IH_{u,t})\right\vert -\mathbb{E}\sup_{u\in \mathcal{V}_{S},t\in \mathbb{%
		R}}\left\vert \mathbb{P}(IH_{u,t})-\mathbb{P}_{n}(IH_{u,t})\right\vert, $$ and $\mathbb{%
	E}\sup_{u\in \mathcal{V}_{S},t\in \mathbb{R}}\left\vert \mathbb{P}(IH_{u,t})-\mathbb{P%
}_{n}(IH_{u,t})\right\vert $ separately. 

We first apply McDiarmid's bounded difference inequality \cite{mcdiarmid1989method}, \cite[Theorem~2.2]{devroye2012combinatorial} to obtain that with
probability at least $1-e^{-nt^{2}/2}$,
\begin{equation}
\sup_{u\in \mathcal{V}_{S},t\in \mathbb{R}}\left\vert \mathbb{P}(IH_{u,t})-\mathbb{P}%
_{n}(IH_{u,t})\right\vert -\mathbb{E}\sup_{u\in \mathcal{V}_{S},t\in \mathbb{%
		R}}\left\vert \mathbb{P}(IH_{u,t})-\mathbb{P}_{n}(IH_{u,t})\right\vert <t,
\label{eq:Mcdiarmid bound}
\end{equation}%
for any $t>0$. To bound the term $\mathbb{E}\sup_{u\in \mathcal{V}_{S},t\in 
	\mathbb{R}}\left\vert \mathbb{P}(IH_{u,t})-\mathbb{P}_{n}(IH_{u,t})\right\vert $, we
need some notation on covering number. Collect all $IH_{u,t}$ in a class of
sets $\mathcal{A}=\{IH_{u,t}:u\in \mathcal{V}_{S},t\in \mathbb{R}\}$. Define
a set of binary vectors by $\mathcal{A}(x_{1}^{n})=\{(b)\in \mathbb{R}%
^{n}:\exists IH_{u,t}\in \mathcal{A}$ s.t. $b_{i}=\mathbb{I}\{x_{i}\in
IH_{u,t}\}$ for all $i\in \lbrack n]\}$. We say a set $\mathcal{B}_{r}$ of
binary vectors is a cover of $\mathcal{A}(x_{1}^{n})$ with radius $r>0$, if
for any $b\in \mathcal{A}(x_{1}^{n})$ there exists some $b_{0}\in \mathcal{B}%
_{r}$ such that the normalized Hamming distance $\rho (b,b_{0}):=\sqrt{%
	\sum_{i=1}^{n}\mathbb{I}\{b_{i}\neq b_{0i}\}/n}\leq r$. Finally we define
covering number $\mathcal{N}(r,\mathcal{A}(x_{1}^{n}))$ as the cardinality
of the smallest cover of $\mathcal{A}(x_{1}^{n})$ with radius $r$. The
following result is a simple version of Dudley's metric entropy bound \cite{dudley1978central}, \cite[Theorem~3.2]{devroye2012combinatorial},
\begin{equation}
\mathbb{E}\sup_{u\in \mathcal{V}_{S},t\in \mathbb{R}}\left\vert \mathbb{P}(IH_{u,t})-%
\mathbb{P}_{n}(IH_{u,t})\right\vert \leq \frac{24}{\sqrt{n}}%
\max_{x_{1},\ldots ,x_{n}}\int_{0}^{1}\sqrt{\log 2\mathcal{N}(r,\mathcal{A}%
	(x_{1}^{n}))}dr. \label{eq:Dudley integral}
\end{equation}
It suffices to provide an upper bound of the covering number $\mathcal{N}(r,%
\mathcal{A}(x_{1}^{n}))$. To this end, we claim that the class of sets $%
\mathcal{A}$ has VC dimension no more than $3+2|S|$. The following result in
\cite{dudley1978central} (see also \cite[Theorem~4.3]{devroye2012combinatorial}, and \cite{haussler1995sphere} for a refined constant) then relates the VC dimension of $\mathcal{A}$ to the covering numbers $%
\mathcal{N}(r,\mathcal{A}(x_{1}^{n}))$, 
\begin{equation}
\mathcal{N}(r,\mathcal{A}(x_{1}^{n}))\leq \left( 4e/r^{2}\right)
^{(3+2|S|)/(1-e^{-1})}.  \label{eq:Dudley VC}
\end{equation}%
We thus are able to combine (\ref{eq:Dudley integral}) and (\ref{eq:Dudley
	VC}) to obtain a bound with explicit constants for the term $\mathbb{E}\sup_{u\in \mathcal{V}_{S},t\in \mathbb{%
		R}}\left\vert \mathbb{P}(IH_{u,t})-\mathbb{P}_{n}(IH_{u,t})\right\vert $,%
\begin{equation}
\mathbb{E}\sup_{u\in \mathcal{V}_{S},t\in \mathbb{R}}\left\vert \mathbb{P}(IH_{u,t})-%
\mathbb{P}_{n}(IH_{u,t})\right\vert \leq \sqrt{\frac{1440e\pi }{1-e^{-1}}}%
\sqrt{\frac{3+2|S|}{n}},  \label{eq:Expectation bound}
\end{equation}%
where we have used the transformation $r=\sqrt{5e}e^{-u/2}$ and gamma
integral $\int_{0}^{\infty }\sqrt{u}e^{-u}du=\sqrt{\pi /4}$. In the end, we
combine (\ref{eq:Mcdiarmid bound}) with $t=\sqrt{\log (1/\delta )/(2n)}$ and
(\ref{eq:Expectation bound}) to obtain the desired result.

It remains to show that $\mathcal{A}$ has VC dimension no more than $3+2|S|$%
. Define a half space $H_{u,t}=\{y:u^{T}y<t\}$ associated with some unit
vector $u$ and some $t\in \mathbb{R}$. We note that $\mathcal{A}\subset 
\mathcal{A}_{0}:=\{A:A=B\cap C$ for some $B\in \mathcal{H}_{S}$ and $C\in 
\mathcal{H}_{S}\}$, where the class of sets $\mathcal{H}_{S}=\{H_{u,t}:u\in 
\mathcal{V}_{S},t\in \mathbb{R}\}=\{\{y:g(y)\geq 0\}:g\in \mathcal{G}\}$
consists of certain half spaces in $\mathbb{R}^{p}$ and $\mathcal{G}$ is a $%
|S|+1$ dimensional linear space. It follows from \cite{steele1978existence} (see, also \cite{dudley1978central}, and \cite[Lemma~4.2]{devroye2012combinatorial}) that $\mathcal{H}_{S}$ has VC dimension no more than $|S|+1$ and thus $%
\mathcal{A}$ has VC dimension no more than $2|S|+3$ by \cite{dudley1978central}, which
completes the proof.
\end{proof}

\begin{proof}[Proof of Proposition \ref{thm:matrix upper_app}]
For any symmetric matrix $S$, we have
$$|u^TSu|-|v^TSu|\leq |(u-v)^TS(u+v)|\leq\|u-v\|\opnorm{S}\|u+v\|.$$ Note that $\bar{\mathcal{U}}$ is taken to be the $(1/2)$-net of $S^{p-1}$. We first show that if both $u$ and $v$ are unit vectors and $\|u-v\|\leq\frac{1}{2}$, then we have $\|u-v\|\|u+v\|\leq \sqrt{15}/4$. Indeed, without loss of generality, we assume $u=(1,0,\ldots,0)^T$ and $v=(v_1,\ldots,v_p)^T$ with $\sum_{i=1}^pv_i^2=1$. Then the aim is to optimize $\|u-v\|^2\|u+v\|^2=4(1-v_1^2)$ given the constraint that $\|u-v\|^2=2-2v_1\leq\frac{1}{4}$. The solution is simply that $v_1=\frac{7}{8}$, which implies that the maximum value of $\|u-v\|^2\|u+v\|^2$ is $\frac{15}{16}$. Thus we have shown that $\|u-v\|\|u+v\|\leq \sqrt{15}/4$. Consequently, we have that $$\opnorm{\hat{\bar{\Sigma}}-\Sigma}\leq\max_{u\in\bar{\mathcal{U}}}|u^T(\hat{\Sigma}-\Sigma)u|+\sqrt{15}/4\opnorm{\hat{\bar{\Sigma}}-\Sigma},$$ which implies $\opnorm{\hat{\bar{\Sigma}}-\Sigma}\leq (1-\sqrt{15}/4)^{-1}\max_{u\in\bar{\mathcal{U}}}|u^T(\hat{\Sigma}-\Sigma)u|$.

It suffices to show that under the assumptions of Proposition \ref{thm:matrix upper_app}, there exists some absolute constant $C>0$ such that with probability at least $1-2\delta$ uniformly over all $Q$ and $\Sigma \in \mathcal{F}(M)$,
\begin{equation}
\max_{u\in\bar{\mathcal{U}}}|u^T(\hat{\Sigma}-\Sigma)u| \leq C\left(
\left(\frac{p}{n}\vee\epsilon^2\right)+\frac{\log(1/\delta)}{n}\right). \label{eq:general union bound}
\end{equation}
The proof of (\ref{eq:general union bound}) follows from a similar analysis of Theorem \ref{thm:general} with $\mathcal{U}$ replaced by the $\bar{\mathcal{U}}$, the $(1/2)$-net of $S^{p-1}$. By inspection of the proof of Theorem \ref{thm:general}, it suffices to show the following equation similar to (\ref{eq:dkwU}) holds with probability at least $1-\delta$ conditioning on the set of $(n_1,n_2)$ satisfying (\ref{eq:ratio}),
\begin{equation}
\sup_{\Gamma\in\mathcal{F}}\left|\mathcal{D}_{\bar{\mathcal{U}}}(\Gamma,P_{\Sigma})-\mathcal{D}_{\bar{\mathcal{U}}}(\Gamma,\{Y_i\}_{i=1}^{n_1})\right|\leq \sqrt{\frac{5\log p+\log(2/\delta)}{2n_1}}. \label{eq:general union bound2}
\end{equation} 
We note that 
\begin{eqnarray*}
	&& \sup_{\Gamma\in\mathcal{F}}\left|\mathcal{D}_{\bar{\mathcal{U}}}(\Gamma,P_{\Sigma})-\mathcal{D}_{\bar{\mathcal{U}}}(\Gamma,\{Y_i\}_{i=1}^{n_1})\right| \\
	&\leq& \sup_{\Gamma\in\mathcal{F}}\max_{u\in\bar{\mathcal{U}}}\left|\mathcal{D}_u(\Gamma,P_{\Sigma})-\mathcal{D}_u(\Gamma,\{Y_i\}_{i=1}^{n_1})\right| \\
	&\leq& \max_{u\in\bar{\mathcal{U}}}\sup_{t\in\mathbb{R}}\left|\frac{1}{n_1}\sum_{i=1}^{n_1}\mathbb{I}\{|u^TY_i|^2\leq t\}-P_{\Sigma}(|u^TY|^2\leq t)\right|.
\end{eqnarray*}
Applying the DKW inequality with tight constant in \cite{massart1990tight} and union bound ranging over all $|\bar{\mathcal{U}}|\leq 5^p$ directions in $\bar{\mathcal{U}}$ into above equation, we obtain (\ref{eq:general union bound2}), which completes the proof.
\end{proof}

\begin{proof}[Proof of Lemma \ref{lem:UDKW}]
Without loss of generality, assume $n$ is even. The case when $n$ is odd can be done via a slight modification of the same argument. Define
$$V_{u,t}(w_1,...,w_n)=\frac{2}{n}\sum_{i=1}^{n/2}\mathbb{I}\{|u^T(w_{2i-1}-w_i)|\leq t\}.$$
and
$$R(W_1,...,W_n)=\sup_{u\in\mathcal{V}_S,t\in \mathbb{R}}\left|\mathbb{U}_n(IH_{u,t})-\mathbb{P}(IH_{u,t})\right|.$$
In order to bound $R(W_1,...,W_n)$, we are going to bound $R(W_1,...,W_n)-\mathbb{E}R(W_1,...,W_n)$ and $\mathbb{E}R(W_1,...,W_n)$ separately.
For the first term, observe that for all $i \in [n]$,
$$\sup_{w_1,...,w_n,w_i'}|R(w_1,...,w_{i-1},w_i,w_{i+1},w_n)-R(w_1,...,w_{i-1},w_i',w_{i+1},w_n)|\leq\frac{1}{n}.$$
Then, by McDiarmid's bounded difference inequality \cite{mcdiarmid1989method}, we have
$$R(W_1,...,W_n)-\mathbb{E}R(W_1,...,W_n)\leq t,$$
with probability at least $1-e^{-2nt^2}$ for any $t>0$.
For the second term, we have
\begin{eqnarray*}
&& \mathbb{E}R(W_1,...,W_n) \\
&=& \mathbb{E}\sup_{u\in\mathcal{V}_S,t\in \mathbb{R}}\left|\mathbb{U}_n(IH_{u,t})-\mathbb{P}(IH_{u,t})\right| \\
&=& \mathbb{E}\sup_{u\in\mathcal{V}_S,t\in \mathbb{R}}\left|\frac{1}{n!}\sum_{\sigma}\left(V_{u,t}(W_{\sigma(1)},...,W_{\sigma(n)})-\mathbb{E}V_{u,t}(W_{\sigma(1)},...,W_{\sigma(n)})\right)\right| \\
&\leq& \frac{1}{n!}\sum_{\sigma}\mathbb{E}\sup_{u\in\mathcal{V}_S,t\in \mathbb{R}}\left|V_{u,t}(W_{\sigma(1)},...,W_{\sigma(n)})-\mathbb{E}V_{u,t}(W_{\sigma(1)},...,W_{\sigma(n)})\right| \\
&=& \mathbb{E}\sup_{u\in\mathcal{V}_S,t\in \mathbb{R}}\left|V_{u,t}(W_{\sigma(1)},...,W_{\sigma(n)})-\mathbb{E}V_{u,t}(W_{\sigma(1)},...,W_{\sigma(n)})\right|.
\end{eqnarray*}
The summation of $\sigma$ above is over all possible permutations. By replacing $n$ and each $X_i$ in the proof of Lemma \ref{lem:DKW} by $n/2$ and $W_{2i-1}-W_i$ respectively, we are able to bound the above expectation using (\ref{eq:Expectation bound}), i.e.,
\begin{equation*}
\mathbb{E}R(W_1,\ldots,W_n)  \leq \sqrt{\frac{1440e\pi }{1-e^{-1}}}\sqrt{\frac{3+2|S|}{n/2}}. 
\end{equation*}
Finally, combining the two bounds and picking $t=\sqrt{\frac{\log (1/\delta )}{2n}}$, we complete the proof.
\end{proof}

\renewcommand{\theequation}{H.\arabic{equation}}
\setcounter{equation}{0}
\section{Additional Proofs in Appendix A}

We first state an extension of the concentration inequality for the suprema of the empirical process in Lemma \ref{lem:DKW} to its corresponding U-process. Recall $IH_{u,t }=\{y:|u^{T}y|\leq t \}$, and for any $S\subset [p]$, $\mathcal{V}_{S}=\{u=(u_i)\in S^{p-1}: u_i=0\text{ if }i\notin S\}$. 
\begin{lemma}\label{lem:UDKW}
	For i.i.d. real-valued data $W_{1},\ldots,W_{n}\sim\mathbb{P}$, we have for any $S\subset [p]$,
	$$\sup_{u\in \mathcal{V}_{S},t \in \mathbb{R} }\left\vert \mathbb{P}(IH_{u,t })-\mathbb{U}%
	_{n}(IH_{u,t })\right\vert \leq \sqrt{\frac{1440e\pi}{1-e^{-1}}}\sqrt{\frac{3+2|S|}{n/2}}+%
	\sqrt{\frac{\log (1/\delta )}{2n}},$$ with probability at least $1-\delta$, where $\mathbb{U}_{n}(IH_{u,t })={n\choose 2}^{-1}\sum_{i<j}\mathbb{I}\{|u^T(W_i-W_j)| \leq t\}$ denotes the U-process of $\{W_{i}\}_{i=1}^{n}$ evaluated with the kernel function $\mathbb{I}\{|u^T(W_1-W_2)| \leq t\}$.
\end{lemma}

Now we are ready to state the proof of the two theorems.
\begin{proof}[Proof of Theorem \ref{thm:U} and Theorem \ref{thm:Ue}]
	It is sufficient to establish the results of Theorem \ref{thm:general} and Theorem \ref{thm:general elliptical} for the U-version matrix depth function. This can be done through the same arguments used in the proofs of Theorem \ref{thm:general} and Theorem \ref{thm:general elliptical} on the observation pairs. An analogous inequality to (\ref{eq:dkwU}) can be established using Lemma \ref{lem:UDKW}. We omit the details here.
\end{proof}


\end{document}